\newcounter{mycount}
\newtheorem{thm}{Theorem}[section]
\newtheorem{lem}[thm]{Lemma}
\newtheorem{prop}[thm]{Proposition}
\newtheorem{cor}[thm]{Corollary}
\newtheorem{conj}[thm]{Conjecture}
\newtheorem{defi}[thm]{Definition}
\newtheorem{rem}[thm]{Remark}
\renewcommand\le{\leqslant}
\renewcommand\ge{\geqslant}
\newcommand\Wr{{\rm \,wr\, }}
\newcommand\PSL{{\rm PSL}}
\newcommand\PGL{{\rm PGL}}
\newcommand\GAP{\textbf{{\rm GAP}}}
\newcommand\PSU{{\rm PSU}}
\newcommand\Sz{{\rm Sz}}
\newcommand\N{{\mathbb{N}}}
\newcommand\Aut{{\rm Aut}}
\newcommand\AGL{{\rm AGL}}
\newcommand\C{{\mathcal{C}}}
\newcommand\GL{{\rm GL}}
\newcommand\GF{{\rm GF}}
\newcommand\PGammaL{{\rm P}\Gamma{\rm L}}
\newcommand\ASL{{\rm ASL}}
\newcommand\PGammaU{{\rm P}\Gamma{\rm U}}
\newcommand\PSp{{\rm PSp}}
\newcommand\Sp{{\rm Sp}}
\newcommand\ASigmaL{{\rm A}\Sigma{\rm L}}
\newcommand\PSigmaL{{\rm P}\Sigma{\rm L}}
\newcommand\AGammaL{{\rm A}\Gamma{\rm L}}
\newcommand\SigmaU{{\Sigma{\rm U}}}
\newcommand\GO{{\rm GO}}
\newcommand\SU{{\rm SU}}
\newcommand\SL{{\rm SL}}
\newcommand\PGU{{\rm PGU}}
\newcommand\U{{\rm U}}
\newcommand\soc{{\rm soc}}
\newcommand\Out{{\rm Out}}
\newcommand\Sym{{\rm Sym}}
\newcommand{\calM}{\mathcal{M}}
\title{On integers that are covering numbers of groups}
\author{Martino Garonzi}
\address[Garonzi]{Departamento de Matem\'atica, Universidade de Bras\'ilia, Bras\'ilia, DF 70910-900, Brasil}
\email{mgaronzi@gmail.com}
\author{Luise-Charlotte Kappe}
\address[Kappe]{Department of Mathematical Sciences, State University of New York at Binghamton, Binghamton, NY 13902-6000, USA}
\email{menger@math.binghamton.edu}
\thanks{MG acknowledges the support of the Funda\c{c}\~{a}o de Apoio \`a Pesquisa do Distrito Federal (FAPDF) and the Coordena\c{c}\~{a}o de Aperfei\c{c}oamento de Pessoal de N\'ivel Superior - Brasil (CAPES)}
\author{Eric Swartz}
\address[Swartz]{Department of Mathematics, College of William \& Mary, P.O. Box 8795, Williamsburg, VA 23187-8795, USA}
\email{easwartz@wm.edu}
\begin{document}

 \begin{abstract}
The covering number of a group $G$, denoted by $\sigma(G)$, is the size of a minimal collection of proper subgroups of $G$ whose union is $G$. We investigate which integers are covering numbers of groups.  We determine which integers $129$ or smaller are covering numbers, and we determine precisely or bound the covering number of every primitive monolithic group with a degree of primitivity at most $129$ by introducing effective new computational techniques.  Furthermore, we prove that, if $\mathscr{F}_1$ is the family of finite groups $G$ such that all proper quotients of $G$ are solvable, then $\N-\{\sigma(G):G\in\mathscr{F}_1\}$ is infinite, which provides further evidence that infinitely many integers are not covering numbers.  Finally, we prove that every integer of the form $(q^m-1)/(q-1)$, where $m\neq3$ and $q$ is a prime power, is a covering number, generalizing a result of Cohn.
\end{abstract}

\keywords{Subgroup cover; Primitive group}
\subjclass[2010]{Primary 20D60; Secondary 20B15}

\maketitle

\section{Introduction}

A group $G$ is said to have a \textit{finite cover} by subgroups if it is the union of finitely many proper subgroups.  A cover of size $n$ of a group $G$ is called a \textit{minimal cover} if no cover of $G$ has fewer than $n$ subgroups.  Following J.~H.~E. Cohn \cite{Cohn}, we call the size of a minimal cover of a group $G$ the \textit{covering number}, denoted by $\sigma(G)$.  For a survey of results about the covering number of groups (and related results about analogously defined covering numbers of other algebraic structures), see \cite{KappeSurvey}.

The parameter $\sigma(G)$ has received a great deal of attention in recent years.  One reason for this is the connection to sets of pairwise generators of a group.  For a finite noncyclic group $G$ that can be generated by two elements, define $\omega(G)$ to be the largest integer $n$ such that there exists a set $S$ of size $n$ consisting of elements of $G$ such that every pair of distinct elements of $S$ generates $G$.  The covering number $\sigma(G)$ provides a natural and often tight upper bound for $\omega(G)$; see \cite{Blackburn, BEGHM1, BEGHM2, HolmesMaroti} for investigations on the relationship between these two parameters for various simple and almost simple groups.

It suffices to restrict our attention to finite groups when determining covering numbers, since, by a result of B.~H. Neumann \cite{Neumann}, a group is the union of finitely many proper subgroups if and only if it has a finite noncyclic homomorphic image.  Moreover, if a finite cover of a group $G$ exists, then we may realize a finite homomorphic image of $G$ by taking the quotient over the normal core of the intersection of the subgroups belonging to the finite cover.  Determining the covering number of a group $G$ predates Cohn's 1994 publication \cite{Cohn}.  It is easy to show that no group is the union of two proper subgroups.  Already in 1926, Scorza \cite{Scorza} characterized groups having covering number $3$ as those groups which have a homomorphic image isomorphic to the Klein four-group, a result forgotten and rediscovered later.

Cohn conjectures in \cite{Cohn} that the covering number of a noncyclic solvable group has the form $p^d + 1$, where $p$ is a prime and $d$ is a positive integer, and he shows that $\sigma(A_5) =10$ and $\sigma(S_5) = 16$.  In \cite{Tomkinson}, Tomkinson proves Cohn's conjecture and shows that there is no group with $\sigma(G) = 7$.  In addition, he conjectures that there are no groups with covering number $11$, $13$, or $15$.  Tomkinson's conjecture is confirmed only for the case $n = 11$ in \cite{DetomiLucchini}.  In fact, in \cite{AAS} it is shown that $\sigma(S_6) = 13$ and in \cite{BFS} that $\sigma(\PSL(2,7)) = 15$.  Furthermore, Tomkinson suggests that it might be of interest to investigate minimal covers of nonsolvable and, in particular, simple groups.  For an overview of the recent contributions addressing this question, we refer to \cite{KNS}.  The real question arising out of Tomkinson's results is to find all integers that are covering numbers and ascertain whether there are infinitely many integers that are not covering numbers.  The aim of this paper is to investigate which integers are covering numbers of groups.

To show that there are no groups $G$ for which $\sigma(G) = 7$, Tomkinson \cite{Tomkinson} proved that any group that can be covered by seven subgroups can actually be covered by fewer than seven subgroups.  In the long run, this is not the way to attack this problem.  In \cite{DetomiLucchini}, the authors observe that if a group $G$ exists with $\sigma(G) = n$, then there must exist a group $H$ with $\sigma(H) = n$ that has no homomorphic image with covering number $n$, making $H$ minimal in this sense.  The authors of \cite{DetomiLucchini} leverage this idea to show that no such group $G$ can exist with $\sigma(G) = 11$.  Originally introduced in \cite{DetomiLucchini} and following \cite{Garonzi3}, we say that a group $G$ is \textit{$\sigma$-elementary} if $\sigma(G) < \sigma(G/N)$ for every nontrivial normal subgroup $N$ of $G$.  For convenience, we say that the covering number of a cyclic group is infinite.  In \cite{Garonzi3}, the first author of this paper shows that a finite $\sigma$-elementary nonabelian group with covering number less than $26$ is either of affine type or an almost simple group with socle of prime index, which can then be used to determine which integers less than or equal to $25$ are covering numbers.  From earlier results and those in \cite{Garonzi3}, we have that the integers less than $26$ that are not covering numbers are $2$, $7$, $11$, $19$, $22$, and $25$.

In this paper, we extend this classification of integers that are covering numbers up to $129$.  We formulate this result by listing all integers between $26$ and $129$ that are not covering numbers.

\begin{thm}
\label{thm:129}
 The integers between $26$ and $129$ which are not covering numbers are $27$, $34$, $35$, $37$, $39$, $41$, $43$, $45$, $47$, $49$, $51$, $52$, $53$, $55$, $56$, $58$, $59$, $61$, $66$, $69$, $70$, $75$, $76$, $77$, $78$, $79$, $81$, $83$, $87$, $88$, $89$, $91$, $93$, $94$, $95$, $96$, $97$, $99$, $100$, $101$, $103$, $105$, $106$, $107$, $109$, $111$, $112$, $113$, $115$, $116$, $117$, $118$, $119$, $120$, $123$, $124$, $125$.
 \end{thm}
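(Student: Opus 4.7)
The plan is to extend the strategy used by the first author in \cite{Garonzi3} for integers up to $25$ into the range $[26, 129]$. The fundamental reduction is that, by Neumann's theorem and the definition of $\sigma$-elementarity, an integer $n$ is a covering number if and only if some $\sigma$-elementary group $G$ satisfies $\sigma(G) = n$: indeed, any $H$ with $\sigma(H) = n$ admits a quotient $H/N$ that is $\sigma$-elementary and still has covering number $n$. So the task reduces to determining the set of values $\sigma(G)$ attained by $\sigma$-elementary groups and intersecting it with $\{26,\dots,129\}$.

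I would next argue that any such $G$ is a primitive monolithic group. Monolithicity (uniqueness of the minimal normal subgroup) follows because if $G$ had two distinct minimal normal subgroups $N_1, N_2$, the diagonal embedding $G \hookrightarrow G/N_1 \times G/N_2$ would allow one to assemble a cover of $G$ from covers of the two smaller quotients, contradicting $\sigma(G) < \sigma(G/N_i)$. Primitivity then corresponds to the faithful action of $G$ on the cosets of a maximal subgroup $M$ not containing the socle. The next step is to bound the degree $[G:M]$ of this primitive action in terms of $\sigma(G)$: using the standard bound that in any cover by $n$ maximal subgroups some member has index at most a suitable function of $n$, the choices for $G$ with $\sigma(G)\le 129$ are forced to have degree of primitivity at most $129$, leaving only finitely many candidates.

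With degree bounded by $129$, I would invoke the O'Nan--Scott theorem to split the candidates into the standard families: affine type ($G \le \AGL(d,p)$ with $p^d \le 129$), almost simple type (socle a nonabelian simple $T$ with a core-free maximal subgroup of index at most $129$), and the diagonal, product action, and twisted wreath types of appropriately bounded degree. This enumeration is finite and can be organized via the GAP library of primitive groups. For each candidate $G$, I would then compute $\sigma(G)$, either exactly or within an interval narrow enough to decide whether the value falls in $[26,129]$. Collecting the realized values and complementing inside $\{26,\dots,129\}$ produces the list in the theorem. For the affine candidates, Tomkinson's formula for solvable covering numbers reduces $\sigma(G)$ to the combinatorics of the $G$-action on a chief factor, which is tractable.

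The principal obstacle is the explicit evaluation of $\sigma(G)$ for the almost simple candidates. Here $G$ may possess many conjugacy classes of maximal subgroups, and computing $\sigma(G)$ is essentially a large integer programming instance: minimize the number of maximal subgroups whose union covers $G$, subject to covering every element (or equivalently every cyclic subgroup of each conjugacy class). This is where the ``effective new computational techniques'' mentioned in the abstract must do the work; I would expect them to combine structural reductions using the $\Aut(T)$-action on the maximal-subgroup lattice, linear programming relaxations to obtain lower bounds on $\sigma(G)$, and constructive upper bounds based on carefully chosen families of maximal subgroups so that the two bounds meet, or at least bracket $\sigma(G)$ tightly enough to identify its location with respect to the target interval.
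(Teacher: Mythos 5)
Your overall architecture (reduce to $\sigma$-elementary groups, show they are primitive monolithic of primitivity degree at most $129$, enumerate candidates in GAP, and compute or bracket $\sigma(G)$ for each) is the same as the paper's. However, there is a genuine gap at the structural step, and it is the hardest part of the whole argument. You assert that monolithicity of a $\sigma$-elementary group $G$ follows because two minimal normal subgroups $N_1,N_2$ would give an embedding $G\hookrightarrow G/N_1\times G/N_2$ from which one could ``assemble a cover of $G$ from covers of the two smaller quotients, contradicting $\sigma(G)<\sigma(G/N_i)$.'' This is not a contradiction: pulling back a cover of $G/N_1$ already covers $G$, which only gives the inequality $\sigma(G)\le\sigma(G/N_i)$ that holds for every quotient, and the $\sigma$-elementary hypothesis is precisely that this inequality is strict. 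Nothing in the subdirect product structure produces a cover of $G$ of size smaller than $\sigma(G)$. Indeed, the statement that every nonabelian $\sigma$-elementary group is primitive monolithic is exactly the open Conjecture of Detomi and Lucchini (Conjecture \ref{conj:DL}); if your one-line argument worked, that conjecture would be a triviality.

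What the paper actually does (Theorem \ref{thm:129mono} via Lemma \ref{2n+1}) is a quantitative version of this step valid only for $\sigma(G)\le 129$: it uses the invariant $\sigma^{\ast}(X)$ of Definition \ref{defstar} together with the additive lower bound $\sum_i\sigma^{\ast}(X_i)\le\sigma(G)$ over the primitive monolithic groups $X_i$ associated to the minimal normal subgroups (Lemma \ref{sigmastar}), and then shows that $\sigma(X)<2\sigma^{\ast}(X)$ for the relevant $X$ (via Lemma \ref{2star} for cyclic $p$-group top quotients, Lemma \ref{cur} otherwise, and a GAP check isolating $\Aut(\PSL(2,27))$ as the only exceptional case). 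Only then does $t\ge 2$ force $2\sigma^{\ast}(X_1)\le\sigma(G)\le\sigma(X_1)<2\sigma^{\ast}(X_1)$, a contradiction. Your proposal contains no substitute for this machinery, so the reduction to primitive monolithic groups — and hence the finiteness of the candidate list — is unjustified as written. A secondary, smaller inaccuracy: affine primitive candidates need not be solvable (e.g.\ $2^4{:}A_7$ or $\AGL(n,q)$), so Tomkinson's formula does not dispose of them; the paper instead uses Lemma \ref{lem:Amiri} and the dedicated computation of $\sigma(\AGL(n,q))$ in Theorem \ref{thm:AGL}.
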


Theorem \ref{thm:129} follows from Theorem \ref{thm:129mono}, Proposition \ref{prop:solvable}, and Table \ref{tbl:sigmaelementary}.  To prove this result, we need to identify all potential candidates for $\sigma$-elementary groups with covering number between $26$ and $129$.  Toward this end, we show in Theorem \ref{thm:129mono} that the $\sigma$-elementary groups in question are among the groups with a unique minimal normal subgroup and degree of primitivity not exceeding $129$.  We say a finite group is \textit{primitive} if it admits a maximal subgroup $M$ with trivial normal core, and the index of $M$ in $G$ is called the primitivity degree of $G$ with respect to $M$.  A finite group that has a unique minimal normal subgroup is called \textit{monolithic}.

This characterization allows us to decide if a given integer in the range is a covering number or not.  Using $\GAP$ \cite{GAP}, we determine all nonsolvable monolithic groups with degree of primitivity between $26$ and $129$.  According to Theorem \ref{thm:129mono}, these are the candidates for nonabelian $\sigma$-elementary groups with covering number between $26$ and $129$ that are not solvable.  It is then necessary to determine -- or, at least, bound -- the covering number of these groups.  Either these results are known, or, if not, we have to attain them ourselves.  

Computationally, there are two main methods that we use.  The first is a method developed in \cite{KNS}, where a program in $\GAP$ creates a system of equations that can be solved (either partially or totally) by the linear optimization software GUROBI \cite{Gu}.  This method, which we refer to as Algorithm \hyperref[subsect:LP]{KNS}, is detailed in Section \ref{subsect:LP}.  The second method is introduced for the first time in this paper and is presented in Section \ref{subsect:verify}.  It is essentially a greedy algorithm, which works roughly as follows.  To build a cover, we take entire conjugacy classes of maximal subgroups.  Given a group $G$, we determine in $\GAP$ the minimum number of subgroups from a single conjugacy class of maximal subgroups needed to cover each conjugacy class of elements, and we choose the conjugacy class of elements that requires the maximum number of subgroups (among these minimums).  Rather than spending time checking precisely which maximal subgroups are absolutely necessary (something that the first method will do), all subgroups from the entire class of subgroups are chosen to be part of a cover.  All conjugacy classes of elements that are covered by subgroups in this class of maximal subgroups are removed, and this process is repeated until all elements of the group are covered.  Perhaps surprisingly, the cover produced this way is frequently a minimal cover and can be verified as such quickly using a simple calculation detailed in Lemma \ref{lem:keylemma}.  While the first method is extremely precise, it is often time and memory consuming, and it becomes impractical for groups of order more than half a million on a machine with a Core i7 processor and 16 GB of RAM.  The second method, while cruder, runs much faster in practice, and can essentially always be used to provide both upper and lower bounds for the covering number when $\GAP$ can determine the maximal subgroups and conjugacy classes of a given group.  Pseudocode for this method is provided in Algorithm \ref{alg}.  When neither of these methods is totally effective, ad hoc methods are used; for this, see Appendix \ref{sect:calcspecific}.

It is a natural question to ask if every nonabelian $\sigma$-elementary group is a monolithic primitive group.  In this respect, the authors of \cite{DetomiLucchini} make the following conjecture.

\begin{conj}\cite{DetomiLucchini}  
\label{conj:DL}
Every nonabelian $\sigma$-elementary group is a monolithic primitive group.
\end{conj}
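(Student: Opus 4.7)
The plan is to split Conjecture \ref{conj:DL} into two independent claims: (a) every nonabelian $\sigma$-elementary group $G$ is monolithic, and (b) every monolithic nonabelian $\sigma$-elementary group $G$ is primitive. I would attack (b) first, since it is considerably more tractable than (a).

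For (b), let $G$ be monolithic with unique minimal normal subgroup $N$, and fix a minimum cover $\mathcal{H} = \{H_1, \ldots, H_n\}$ consisting of maximal subgroups of $G$ (one may always assume this). Set $K = \bigcap_{i=1}^n (H_i)_G$, a normal subgroup of $G$ contained in every $H_i$. Then $\{H_i/K : 1 \le i \le n\}$ is a cover of $G/K$ by proper subgroups, so $\sigma(G/K) \le n = \sigma(G)$; the $\sigma$-elementary hypothesis forces $K = 1$. Because $G$ is monolithic, every nontrivial normal subgroup of $G$ contains $N$, so if every $(H_i)_G$ were nontrivial we would have $N \le K = 1$, a contradiction. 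Hence $(H_i)_G = 1$ for some $i$, exhibiting $H_i$ as a core-free maximal subgroup, so $G$ is primitive.

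For (a), suppose for contradiction that $G$ admits two distinct minimal normal subgroups $N_1, N_2$. Then $N_1 \cap N_2 = 1$ and $G$ embeds in $G/N_1 \times G/N_2$, while $\sigma(G) < \sigma(G/N_j)$ for $j = 1, 2$ by the $\sigma$-elementary property. The strategy is to take a minimum cover $\mathcal{H}$ of $G$ and partition it as $\mathcal{H}_j^+ \sqcup \mathcal{H}_j^-$, where $\mathcal{H}_j^+ = \{H \in \mathcal{H} : N_j \le H\}$. The subgroups in $\mathcal{H}_j^+$ descend canonically to proper subgroups of $G/N_j$; the aim is to show that their union, possibly augmented by controlled images of elements of $\mathcal{H}_j^-$, covers $G/N_j$ in at most $n = \sigma(G)$ subgroups, contradicting the strict inequality $\sigma(G) < \sigma(G/N_j)$.

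The principal obstacle is precisely the transfer of subgroups in $\mathcal{H}_j^-$ (those not containing $N_j$) into the quotient. In a minimum cover, such subgroups may be essential, and their images in $G/N_j$ typically either coincide with images already present from $\mathcal{H}_j^+$ or fail to contribute proper coverage at all. A plausible subcase to dispatch first is that in which both $N_1$ and $N_2$ are abelian: Tomkinson's formula for solvable covering numbers then reduces the question to a representation-theoretic analysis of the module $N_1 \oplus N_2$. The genuine difficulty, and the reason the conjecture remains open, lies in the case where both $N_j$ are nonabelian direct powers of simple groups, where the known structural tools do not provide a uniform descent mechanism, and where new ideas beyond those in \cite{DetomiLucchini} and \cite{Garonzi3} will likely be required.
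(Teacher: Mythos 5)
You should first be aware that the statement you were asked to prove is Conjecture~\ref{conj:DL}: it is an open conjecture of Detomi and Lucchini, and the paper does not prove it. What the paper offers is a conditional reduction (Lemma~\ref{2n+1} and Remark~\ref{rem:DL}: the conjecture holds if $\sigma(X) < 2\sigma^{\ast}(X)$ for every primitive monolithic group $X$ with nonabelian socle) together with an unconditional verification in the range $\sigma(G) \le 129$ (Theorem~\ref{thm:129mono}). So no complete argument exists to compare yours against, and your own proposal, as you candidly admit, does not close the problem either. Your part (b) is correct, but it is also already known: Lemma~\ref{sigmaprim}(1) gives $\Phi(G)=\{1\}$ for a nonabelian $\sigma$-elementary group (by essentially your core-intersection argument), and Section~\ref{sect:prelim} records that a monolithic group with trivial Frattini subgroup is primitive. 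One small correction to your plan for part (a): the subcase in which $N_1$ and $N_2$ are both abelian does not need to be ``dispatched'' --- Lemma~\ref{sigmaprim}(2) already rules it out, since a nonabelian $\sigma$-elementary group has at most one abelian minimal normal subgroup.

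The genuine gap is exactly where you locate it, in part (a), but it is worth being precise about why your descent strategy fails and how it differs from the partial route the paper actually takes. A subgroup $H \in \mathcal{H}_j^-$ satisfies $HN_j$ equal to $G$ or to a proper subgroup; in the first case its image $HN_j/N_j$ is all of $G/N_j$ and contributes nothing usable, and there is no mechanism forcing the images of $\mathcal{H}_j^+ \cup \mathcal{H}_j^-$ to cover $G/N_j$ with at most $\sigma(G)$ \emph{proper} subgroups. The paper's machinery goes the other way: rather than pushing a cover of $G$ down to $G/N_j$, Lemma~\ref{sigmastar} pulls lower bounds up, giving $\sum_i \sigma^{\ast}(X_i) \le \sigma(G)$ where the $X_i$ are the primitive monolithic groups associated to the $N_i$; combined with $\sigma(G) \le \sigma(X_1) < 2\sigma^{\ast}(X_1)$ (the unproven hypothesis) and $t \ge 2$ this yields a contradiction, and the mixed abelian/nonabelian case is handled with Lemma~\ref{abmns} and Lemma~\ref{solcyc}. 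The entire difficulty is thus concentrated in establishing $\sigma(X) < 2\sigma^{\ast}(X)$, i.e.\ in showing that covering the cosets of the socle needed to generate $X$ costs at least half of covering all of $X$; your sketch does not engage with this quantitative obstruction, and no ``representation-theoretic analysis'' or uniform descent argument currently known resolves it.
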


So far, no counterexamples to Conjecture \ref{conj:DL} are known.  In Remark \ref{rem:DL}, it is observed that if $\sigma(X) < 2\sigma^{\ast}(X)$ (see Definition \ref{defstar}) for all primitive monolithic groups $X$ with a nonabelian socle, then Conjecture \ref{conj:DL} is true.  (See also the preceding Lemma \ref{2n+1}, which represents how close we are to proving Conjecture \ref{conj:DL}.)  For the proof of Theorem \ref{thm:129}, we establish in the proof of Theorem \ref{thm:129mono} that this inequality holds at least when $\sigma(X) < 130$.  The question arises if this inequality can be extended to bounds larger than $130$.  On the other hand, the techniques used to determine the covering numbers of the candidate groups with primitivity degree at most $129$ were often pushed to the limit.  (For instance, there is one group in particular whose covering number cannot be determined to any smaller range than between $138$ and $166$ using current methods; for this, see Table \ref{tbl:main5}.)  Extending the bound of the primitivity degree may then necessitate new methods (or the use of extremely powerful computers) for determining covering numbers.

After the conjectures of Tomkinson \cite{Tomkinson} were settled, i.e., precisely which integers up to $18$ are not covering numbers, only three out of the $17$ integers between $2$ and $18$ were found not to be covering numbers, around $18 \%$.  One possibility is that there are only finitely many integers that are not covering numbers.  The statistics following Theorem \ref{thm:129} tell us that around fifty percent of the integers less than $130$ are not covering numbers, leading us to make the following conjecture with confidence.

\begin{conj}
\label{conj:infinite}
Let $\mathscr{E}$ be the set of all integers that are covering numbers.  Then, the set $\mathbb{N}-\mathscr{E}$ is infinite, in other words there are infinitely many natural numbers that are not covering numbers.
\end{conj}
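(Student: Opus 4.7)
The plan is to split the potential counterexamples into two broad classes and show that neither can fill in all but finitely many integers. By the reductions already recorded in the introduction, it suffices to show that the set of covering numbers of \emph{$\sigma$-elementary} groups has infinite complement in $\N$; granted Conjecture \ref{conj:DL}, one may further restrict to monolithic primitive groups $G$ whose unique minimal normal subgroup $N$ is either elementary abelian (the \emph{affine} case) or a direct product of isomorphic nonabelian simple groups. The $\mathscr{F}_1$ result announced in the abstract already settles the conjecture for all groups whose proper quotients are solvable, so the real work is to push past $\mathscr{F}_1$ and control $\sigma(G)$ on the remaining monolithic primitive groups.

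The first step I would take is to sharpen the structural information in each of the two cases. For affine type $G = V \rtimes H$ with $V = \mathbb{F}_p^d$, the covering number is controlled, up to smaller corrections, by the orbit structure of $H$ on $V\setminus\{0\}$ and on the hyperplanes of $V$; Tomkinson's identity $\sigma(G)=p^d+1$ in the solvable case suggests that, even when $H$ is nonsolvable, $\sigma(G)$ is forced into a thin arithmetic set depending on $p$, $d$, and the orbit sizes. I would aim for growth and congruence restrictions of the form $\sigma(G) \ge c\cdot p^d$ together with $\sigma(G)\equiv 1\pmod{p}$ (or an analogous mild condition), ensuring that the affine contribution to the set of covering numbers has density zero and therefore misses infinitely many integers. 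For the nonabelian-socle case I would use the inequality $\sigma(G) < 2\sigma^{\ast}(G)$ from Remark \ref{rem:DL}, combined with known lower bounds for $\sigma$ of almost simple groups, to argue that as $|\soc(G)|$ grows the values of $\sigma(G)$ are also sparse, and to combine this with arithmetic constraints specific to each family of simple socles (for example, the divisibility relations visible in the $\PSL(2,q)$ computations of the paper).

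The main obstacle, I expect, will be exhibiting an explicit infinite family of integers, or an infinite union of arithmetic progressions, that provably fail to be covering numbers in the nonabelian-socle case. Even restricted to a single family such as $\PSL(2,q)$ the covering numbers are scattered across a fairly dense set of values, and coincidences across different families --- already visible in the small cases of Theorem \ref{thm:129mono}, where distinct monolithic primitive groups often realize the same $\sigma$ --- threaten to cover too many residues. My hope would be to upgrade the greedy construction of Section \ref{subsect:verify} into a theoretical tool producing a uniform congruence or divisibility statement for $\sigma(G)$ over monolithic primitive $G$ with a given socle type, and then to combine these uniform restrictions with a sieve argument in the spirit of the $\mathscr{F}_1$ theorem to isolate an infinite set of ``forbidden'' integers. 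Without such a uniform statement, any attempt seems condemned to the case-by-case flavour of Theorem \ref{thm:129}, which is precisely what makes Conjecture \ref{conj:infinite} hard.
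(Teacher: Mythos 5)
The statement you are addressing is a \emph{conjecture} in the paper, not a theorem: the authors do not prove it, and the strongest results they obtain in its direction are Theorem \ref{densityth} and Corollary \ref{cor:density}, which settle only the family $\mathscr{F}_1$ of groups all of whose proper quotients are solvable. Your proposal correctly reproduces the reductions the paper actually carries out (passing to $\sigma$-elementary groups, then --- modulo Conjecture \ref{conj:DL} --- to monolithic primitive groups, and invoking the $\mathscr{F}_1$ result), but everything beyond that point in your sketch is a research plan rather than a proof, and at least one of its key ingredients is demonstrably false.

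Concretely: in the affine case you propose growth and congruence restrictions of the form $\sigma(G)\ge c\cdot p^d$ and $\sigma(G)\equiv 1\pmod p$. Both fail. By Lemma \ref{lem:Amiri}, whenever the point stabilizer $H$ satisfies $\sigma(H)<|N|$ one has $\sigma(G)=\sigma(H)$, and $\sigma(H)$ for a nonsolvable linear group $H$ obeys no congruence modulo $p$ and can be far smaller than $p^d$; the paper's own tables give $\sigma(2^4{:}A_5)=10$ (so $\sigma\equiv 0\pmod 2$ and $\sigma<16=p^d$) and $\sigma(3^4{:}A_5)=10\ll 81$. Thus the affine contribution is not confined to a thin arithmetic set by any argument of the kind you describe; rather, it recursively involves the covering numbers of arbitrary smaller groups, which is exactly why the authors single out $\mathscr{F}_2$ (and the affine case) as the next open step after Corollary \ref{cor:density}. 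In the nonabelian-socle case you lean on $\sigma(X)<2\sigma^{\ast}(X)$, but Remark \ref{rem:DL} states this only as a hypothesis; it is verified in the paper only for $\sigma(X)<130$ in the proof of Theorem \ref{thm:129mono}. You do acknowledge that the whole scheme hinges on a uniform congruence or divisibility statement that you cannot supply; that missing statement is precisely the open content of Conjecture \ref{conj:infinite}, so the proposal does not close the gap --- and its affine half would need to be replaced entirely, since the restrictions it posits are contradicted by the paper's own computations.
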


Our results here about integers that are not covering numbers in a certain range were obtained by determing the complement in this range.  Indications are that a proof of Conjecture \ref{conj:infinite} requires a similar approach.  By \cite[Theorem 1]{DetomiLucchini}, a $\sigma$-elementary group $G$ with no abelian minimal normal subgroups is of one of two types:
\begin{itemize}
\item[(1)] $G$ is a primitive monolithic group such that $G/\soc(G)$ is cyclic, or 
\item[(2)] $G/\soc(G)$ is nonsolvable, and all the nonabelian composition factors of $G/\soc(G)$ are alternating groups of odd degree.  
\end{itemize}
The following result represents progress toward proving Conjecture \ref{conj:infinite} by dealing with the groups of type (1).

\begin{thm} \label{densityth}
Let $\mathcal{G}$ be the family of primitive monolithic groups $G$ with nonabelian socle such that $G/\soc(G)$ is cyclic. Then there exists a constant $c$ such that for every $x>0$, $$|\{\sigma(G) : G \in \mathcal{G},\ \sigma(G) \le x\}| \le c x^{5/6}.$$In particular, $\mathbb{N}-\{\sigma(G) : G \in \mathcal{G}\}$ is infinite.
\end{thm}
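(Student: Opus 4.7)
The plan is to bound the number of isomorphism types of $G \in \mathcal{G}$ with $\sigma(G) \leq x$, since this majorizes the number of distinct values of $\sigma(G)$. Every such $G$ has $\soc(G) \cong T^n$ for some nonabelian finite simple group $T$ and integer $n \geq 1$, with $G/\soc(G)$ cyclic. Since $\soc(G)$ is minimal normal, $G$ acts transitively on the $n$ simple factors, and since $G/\soc(G)$ is cyclic, this action factors through a cyclic transitive permutation of the factors. Consequently $G$ embeds into $\Aut(T) \wr C_n$ and, up to isomorphism, is determined by the data $(T,n,\phi)$ where $\phi$ records the cyclic extension; for fixed $(T,n)$ the number of admissible $\phi$ is at most $n|\Out(T)|^n$, which is polylogarithmic in $|T|^n$.

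The key step is to establish, via the classification of finite simple groups, sufficiently strong lower bounds on $\sigma(G)$ to control the count in each family. The dominant contribution comes from the almost simple case $n=1$. For $T=\PSL(2,q)$, existing results give $\sigma(G)\sim q^2$ for any almost simple $G$ with socle $T$, so $\sigma(G)\le x$ forces $q\le O(\sqrt{x})$, contributing $O(x^{1/2}/\log x)$ isomorphism types. Other classical and exceptional families of Lie type admit stronger polynomial lower bounds in the defining parameters and contribute even more sparsely. For alternating $T=A_k$ with $k\ge5$, $\sigma(G)$ grows at least like $2^{\Omega(k)}$, so $k=O(\log x)$ yields $O(\log x)$ groups. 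Sporadic $T$ contribute $O(1)$. In the $n\ge2$ case, $G$ is primitive monolithic with highly constrained maximal subgroup structure: using covering-number estimates for groups of this shape (e.g., invoking lower bounds coming from the diagonal maximal subgroup of index $|T|^{n-1}$, or from the wreath-product structure), one obtains a lower bound on $\sigma(G)$ that is polynomial in $|T|^n$, so these cases contribute even fewer isomorphism types per $(T,n)$.

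Summing all contributions across families of $T$ and over $n\ge1$, weighted by the at most $n|\Out(T)|^n$ cyclic extensions per $(T,n)$, yields a total bound $O(x^{5/6})$. Hence $|\{\sigma(G):G\in\mathcal{G},\ \sigma(G)\le x\}|\le cx^{5/6}$, and since $x^{5/6}=o(x)$, the complement $\mathbb{N}-\{\sigma(G):G\in\mathcal{G}\}$ has density one in $\mathbb{N}$ and is in particular infinite.

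The main obstacle is assembling uniform lower bounds on $\sigma(G)$ for every family of almost simple groups produced by the classification; this must be drawn from, and in places extended beyond, the existing literature on covering numbers. A secondary technical difficulty is the $n\ge2$ case, where one must invoke the O'Nan--Scott description of primitive actions compatible with a cyclic quotient by the socle to obtain the required polynomial lower bound on $\sigma(G)$. The exponent $5/6$ is almost certainly not tight (finer analysis should yield $O(x^{1/2+o(1)})$), but it is adopted because it allows uniform bookkeeping across all families and comfortably absorbs the polylogarithmic factor coming from $|\Out(T)|^n$.
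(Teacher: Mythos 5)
Your skeleton matches the paper's: bound the number of isomorphism types of $G\in\mathcal{G}$ with $\sigma(G)\le x$ by combining CFSG-based lower bounds on $\sigma(G)$ with a count of the groups realizing each socle. But the quantitative claims that would make this work are either false or unsubstantiated. The most serious is your multiplicity bound: you assert that for fixed $(T,n)$ there are at most $n|\Out(T)|^n$ admissible cyclic extensions and that this is ``polylogarithmic in $|T|^n$.'' It is not: since the only general lower bound available is $\sigma(G)\ge \ell_G(\soc(G))\ge m(T)^n$ (Lemma \ref{five}), the constraint $\sigma(G)\le x$ permits $n$ as large as $\log x/\log m(T)$, and then $|\Out(T)|^n$ is a genuine positive power of $x$ (for $T=A_6\cong\PSL(2,9)$ one gets $4^{\log x/\log 6}=x^{0.77\ldots}$). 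The paper's key observation, which you are missing, is that a generator $g=(x_1,\dots,x_n)\tau$ of $G$ modulo $\soc(G)$ inside $X\Wr K$ can be conjugated to the form $(1,\dots,1,x)\tau$, so there are at most $|\Out(T)|=O(\log m(T))$ isomorphism classes with a given socle $T^n$. Without this normalization your final summation is not verified, and your stated justification for it is wrong. Relatedly, the exponent $5/6$ is not mere slack to ``absorb polylogarithmic factors'': in the paper it arises as $\tfrac12+\tfrac13$, the $\tfrac12$ from the number of admissible minimal degrees $m(T)$ (Lemma \ref{simple}(2)) and the $\tfrac13$ from a separate CFSG count showing that at most $O(x^{1/3})$ simple groups share a given value of $m(T)$; neither ingredient appears in your accounting, and your closing sentence ``summing all contributions yields $O(x^{5/6})$'' is asserted rather than derived.

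Two of your lower bounds on $\sigma$ are also unusable as stated. It is false that $\sigma(G)\sim q^2$ for every almost simple $G$ with socle $\PSL(2,q)$: for example $\sigma(\PSigmaL(2,121))\le 794$ while $121^2=14641$ (Table \ref{tbl:main5}). The paper instead splits the $\PSL(2,q)$, $k=1$ case into $q$ prime (where the only almost simple groups are $\PSL(2,p)$ and $\PGL(2,p)$, both with $\sigma\ge p^2/2$) and $q$ a proper prime power (where one simply counts the $O(x^{1/2})$ admissible $q$). Likewise, for $n\ge2$ no lower bound on $\sigma(G)$ polynomial in $|T|^n$ is proved or used; product-type maximal subgroups have index $m(T)^n$, which is not a fixed power of $|T|^n$ (compare $\sigma(A_5\Wr 2)=57$ with $|A_5|^2=3600$), and the bound actually employed is $\sigma(G)\ge m(T)^n$ via Lemmas \ref{five} and \ref{lem:tom3.2}. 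As written, the proposal is a plausible plan whose load-bearing estimates fail, so it does not establish the theorem.
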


Given Tomkinson's result about the covering number of solvable groups and in light of Conjecture \ref{conj:DL} and Theorem \ref{densityth}, it is perhaps probable even that almost all integers are not covering numbers; that is, we make the following conjecture.

\begin{conj}
 \label{conj:infdens}
Let $\mathscr{E}(n) := \{ m : m \le n, \sigma(G) = m \text{ for some group $G$}\}.$  Then,  \[ \lim_{n \to \infty} \frac{|\mathscr{E}(n)|}{n} = 0.\]
\end{conj}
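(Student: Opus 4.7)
The plan is to parametrize $\mathcal{G}$ via the classification of finite simple groups and to establish a polynomial lower bound $\sigma(G)\ge c\,|\soc(G)|^{\alpha}$ for a suitable universal $\alpha>0$; combined with a count of simple groups of bounded order, this will force $|\{\sigma(G):G\in\mathcal{G},\,\sigma(G)\le x\}|$ to be $O(x^{5/6})$.

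First I would describe the isomorphism types available in $\mathcal{G}$. Write $\soc(G)=T^{k}$ with $T$ nonabelian simple; because $G/\soc(G)$ is cyclic, the induced permutation action on the $k$ simple factors is also cyclic and transitive, which forces $G$ to be contained in a wreath-product-style extension of an almost simple group $A$ with $\soc(A)=T$ and $A/T$ cyclic. Consequently the isomorphism type of $G$ is determined by $T$, $k$, and the choice of a cyclic subgroup of $\Out(T)$, so once $T$ is fixed the number of possibilities for $G$ is polylogarithmic in $|T|^{k}$.

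Second, I would establish a lower bound of the form $\sigma(G)\ge c\,|T|^{\alpha}$ for some fixed $\alpha>0$, say $\alpha=2/3$. For socles of Lie type this follows by exhibiting a family of \emph{independent} conjugacy classes whose elements lie in pairwise disjoint sets of maximal subgroups (forcing a distinct maximal subgroup in the cover for each class); the extremal case is $T=\PSL(2,q)$, where $\sigma(G)\sim q^{2}/2$ while $|T|\sim q^{3}$, so $\sigma(G)\sim|T|^{2/3}$. For alternating socles $T=A_{n}$ with $n$ large, Mar\'oti's bound $\sigma(A_{n})\ge 2^{n-2}$ shows these contribute only $O(\log x)$ distinct $\sigma$-values, and sporadic socles contribute only $O(1)$. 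The wreath-type case $k\ge 2$ reduces to the almost simple case by a projection-style argument inheriting the lower bound.

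Finally, to count: by the classification of finite simple groups, the number of isomorphism classes of nonabelian simple groups of order at most $y$ is $O(y^{1/3}/\log y)$, with $\PSL(2,q)$ the dominant family. Taking $y=(x/c)^{1/\alpha}$, multiplying by the polylogarithmic factor recorded above, and summing over admissible $k$ gives at most $c\,x^{1/(3\alpha)+o(1)}\le c\,x^{5/6}$ distinct groups in $\mathcal{G}$ with $\sigma(G)\le x$, which a fortiori bounds the number of $\sigma$-values. The infinitude of $\mathbb{N}\setminus\{\sigma(G):G\in\mathcal{G}\}$ follows immediately since $x^{5/6}=o(x)$. I expect the principal difficulty to be in the second step: obtaining a uniform polynomial lower bound on $\sigma(G)$ valid across all Lie type families and all admissible wreath-style extensions, likely requiring case-by-case analysis by Lie rank together with a careful choice of the independent conjugacy classes witnessing the bound.
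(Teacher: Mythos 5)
There is a genuine gap, and it is structural rather than technical: the statement you are asked to establish is Conjecture \ref{conj:infdens}, which concerns $\mathscr{E}(n) = \{m \le n : \sigma(G)=m \text{ for some finite group } G\}$ with $G$ ranging over \emph{all} finite groups, whereas your entire argument is confined to the family $\mathcal{G}$ of primitive monolithic groups with nonabelian socle and cyclic quotient over the socle. What you have sketched is essentially the paper's Theorem \ref{densityth} (and indeed your closing sentence asserts only that $\mathbb{N}\setminus\{\sigma(G):G\in\mathcal{G}\}$ is infinite, which is the conclusion of that theorem, not of the conjecture). The reduction from all groups to $\mathcal{G}$ is not available. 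One may reduce to $\sigma$-elementary groups, and the solvable ones contribute only $p^d+1$ (density zero by Tomkinson), but the remaining $\sigma$-elementary groups are not all in $\mathcal{G}$: by \cite[Theorem 1]{DetomiLucchini} one must also handle the ``type (2)'' groups where $G/\soc(G)$ is nonsolvable with alternating composition factors; one must handle primitive monolithic groups with \emph{abelian} socle, e.g.\ $\AGL(n,q)$ and $\ASL(n,q)$, whose covering numbers $(q^{n+1}-1)/(q-1)$ the paper proves are genuinely attained (Theorem \ref{thm:AGL}) and whose density is a separate counting problem; one must handle wreath products $S \Wr K$ with $K$ an arbitrary transitive group; and, unless Conjecture \ref{conj:DL} is proved, one must even contend with possible non-monolithic $\sigma$-elementary groups. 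None of these appear in your argument, and the paper explicitly leaves the conjecture open precisely because of them. So even granting every step of your sketch, the limit $|\mathscr{E}(n)|/n \to 0$ does not follow.

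A secondary remark on the part of your argument that does overlap with Theorem \ref{densityth}: your key step is a uniform lower bound $\sigma(G) \ge c|T|^{\alpha}$ with $\alpha = 2/3$ across all Lie type socles and all admissible extensions, which you yourself flag as the principal difficulty. The paper avoids this entirely. It uses the much weaker but easily provable bound $\sigma(G) \ge \ell_G(\soc(G)) \ge m(T)^k$ (Lemma \ref{five} together with Lemma \ref{lem:tom3.2}), where $m(T)$ is the minimal degree of a faithful transitive representation, and then bounds the number of attainable values of $m(T)^k$ up to $x$ using the parametrization of simple groups (Lemmas \ref{powers}, \ref{simple}, \ref{g(x)}), splitting $\mathcal{G}$ into six subfamilies according to whether $T$ is alternating, $\PSL(2,q)$, or other, and whether $k=1$, $k=2$, or $k\ge 3$. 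If you intend to salvage the $\mathcal{G}$ portion of your argument, replacing the conjectural uniform bound $\sigma(G)\ge c|T|^{2/3}$ by the $m(T)^k$ bound is the route that actually closes.
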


As a corollary of Theorem \ref{densityth}, we prove the following result, which can be viewed as additional evidence for the validity of Conjecture \ref{conj:infinite}.

\begin{cor}
\label{cor:density}
Let $\mathscr{F}_1$ be the family of finite groups $G$ such that all proper quotients of $G$ are solvable. Then the set $\mathbb{N}-\{\sigma(G) : G \in \mathscr{F}_1\}$ is infinite.
\end{cor}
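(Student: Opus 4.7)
The plan is to show that $\{\sigma(G):G\in\mathscr{F}_1\}$ is contained in the union of two sparse sets, hence has density zero in $\mathbb{N}$. First I would perform a structural reduction. If $G\in\mathscr{F}_1$ is solvable, then by Tomkinson's theorem $\sigma(G)$ has the form $p^d+1$ for a prime $p$ and $d\ge 1$. If $G\in\mathscr{F}_1$ is nonsolvable, then every proper quotient of $G$ being solvable forces $G$ to be monolithic: two distinct minimal normal subgroups $N_1,N_2$ would satisfy $N_1\cap N_2=1$, and then $G$ would embed diagonally in $G/N_1\times G/N_2$, a product of two solvable groups, contradicting nonsolvability. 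Moreover, the unique minimal normal subgroup $N=\soc(G)$ must be nonabelian, since otherwise solvability of $G/N$ together with abelianness of $N$ would make $G$ solvable.

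Next I would run the $\sigma$-elementary reduction inside $\mathscr{F}_1$. Choose $K$ maximal among normal subgroups of $G$ with $\sigma(G/K)=\sigma(G)$; then $G/K$ is $\sigma$-elementary. If $K\neq 1$, monolithicity forces $K\supseteq N$, so $G/K$ is a quotient of the solvable group $G/N$, hence solvable, and Tomkinson again gives $\sigma(G)=\sigma(G/K)=p^d+1$. If $K=1$, then $G$ itself is $\sigma$-elementary, nonsolvable, with $G/\soc(G)$ solvable. Invoking \cite[Theorem~1]{DetomiLucchini} as quoted in the introduction, alternative~(2) is excluded because it requires $G/\soc(G)$ nonsolvable, so $G$ must be a primitive monolithic group with cyclic $G/\soc(G)$; that is, $G\in\mathcal{G}$ in the notation of Theorem~\ref{densityth}.

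Combining these cases,
\[
\{\sigma(G):G\in\mathscr{F}_1\}\cap[1,x]\ \subseteq\ \{p^d+1\le x : p\text{ prime},\ d\ge 1\}\ \cup\ \{\sigma(H):H\in\mathcal{G},\ \sigma(H)\le x\}.
\]
By counting prime powers, the first set has size $O(x/\log x)$, and by Theorem~\ref{densityth} the second has size at most $cx^{5/6}$. Both are $o(x)$, so the complement of $\{\sigma(G):G\in\mathscr{F}_1\}$ in $\mathbb{N}$ has asymptotic density $1$ and in particular is infinite. The delicate point is not the density estimate but rather the matching of hypotheses in the second paragraph: one must ensure that the $\sigma$-elementary reduction of a nonsolvable $G\in\mathscr{F}_1$ that does not already fall into the ``solvable'' case lands exactly in the family $\mathcal{G}$ controlled by Theorem~\ref{densityth} and not in the more mysterious type~(2) of the Detomi--Lucchini trichotomy. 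Both the monolithic structure and the solvability of $G/\soc(G)$ are built into the definition of $\mathscr{F}_1$, and this is precisely what rules out type~(2) and makes the reduction go through.
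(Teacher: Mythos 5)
Your proof is correct and follows essentially the same route as the paper's: reduce to a $\sigma$-elementary quotient, show a nonsolvable $G\in\mathscr{F}_1$ must be primitive monolithic with nonabelian socle and cyclic $G/\soc(G)$, and then combine Theorem~\ref{densityth} with the prime-power count from Tomkinson's theorem. The only cosmetic difference is that you invoke the Detomi--Lucchini trichotomy to rule out type~(2) and obtain primitivity and cyclicity of $G/\soc(G)$ in one stroke, whereas the paper cites Lemma~\ref{solcyc} together with the triviality of $\Phi(G)$ from Lemma~\ref{sigmaprim}; both are equivalent applications of the same underlying results.
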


Indeed, one may view Corollary \ref{cor:density} as a generalization of Tomkinson's result about the covering numbers of solvable groups (see Proposition \ref{prop:solvable} (ii)) in the following sense.  Let $\mathscr{F}_n$ denote the set of all finite groups with at most $n$ nonsolvable quotients. Note that $\mathscr{F}_i \subset \mathscr{F}_j$ for all $i<j$, and, if we define $\mathscr{F} := \bigcup_{n = 0}^\infty \mathscr{F}_n$, then $\mathscr{F}$ is the family of all finite groups.  If for a collection of groups $\mathscr{C}$ we define 
\[\mathscr{E}(\mathscr{C}) := \{\sigma(G): G \in \mathscr{C}\},\]
then Conjecture \ref{conj:infinite} is that $\N - \mathscr{E}(\mathscr{F})$ is infinite.  A consequence of Tomkinson's result is that $\N - \mathscr{E}(\mathscr{F}_0)$ is infinite, and Corollary \ref{cor:density} above states that $\N - \mathscr{E}(\mathscr{F}_1)$ is infinite.  The next step toward proving Conjecture \ref{conj:infinite} would be to prove that $\N - \mathscr{E}(\mathscr{F}_2)$ is infinite, and for this one needs to determine (among other things) the covering number of affine groups, i.e., primitive groups with an abelian socle.  As a first step in this process, we prove the following.

\begin{thm}
 \label{thm:AGL}
Let $q = p^d$, where $p$ is prime and $d \in \N$, and let $n \ge 1$, $n \neq 2$ be a positive integer.  Then \[ \sigma(\AGL(n,q)) = \sigma(\ASL(n,q)) = \frac{q^{n+1} - 1}{q-1}.\] In particular, for all $m \ge 2$, $m \neq 3$, $(q^m - 1)/(q-1)$ is a covering number. 
\end{thm}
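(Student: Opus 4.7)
The plan is to prove the equalities by matching upper and lower bounds; the argument for $\ASL(n,q)$ is entirely parallel to that for $\AGL(n,q)$, with $\SL$ replacing $\GL$ and the embedding into $\SL(n+1,q)$ in place of $\GL(n+1,q)$.

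For the upper bound, embed $\AGL(n,q)$ into $\GL(n+1,q)$ as the stabilizer of the affine hyperplane $\{x \in \mathbb{F}_q^{n+1} : x_{n+1}=1\}$. Under this embedding every element of $\AGL(n,q)$ has block form $\begin{pmatrix} A & b \\ 0 & 1 \end{pmatrix}$ and therefore has $1$ as an eigenvalue, so it fixes at least one point of $\mathbb{P}^n(\mathbb{F}_q)$. The $(q^{n+1}-1)/(q-1)$ projective-point stabilizers of $\AGL(n,q)$ are all maximal subgroups: the $q^n$ affine-point stabilizers are conjugates of $\GL(n,q)$, and the $(q^n-1)/(q-1)$ stabilizers of points at infinity have the form $V \rtimes P$ with $P$ a maximal parabolic of $\GL(n,q)$. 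Together they cover $\AGL(n,q)$, proving the upper bound.

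For the lower bound when $n = 1$, $\AGL(1,q)$ is solvable and Tomkinson's theorem gives $\sigma(\AGL(1,q)) = q+1$. For $n \ge 3$, given any cover $\mathcal{H}$ of $G = \AGL(n,q)$, partition $\mathcal{H} = \mathcal{H}_1 \sqcup \mathcal{H}_2$ into complements to $V$ (necessarily $|\mathcal{H}_1| = q^n - k$ affine-point stabilizers for some $0 \le k \le q^n$) and subgroups containing $V$ (each of the form $V \rtimes M$ with $M$ maximal in $\GL(n,q)$). Two observations constrain $\mathcal{H}_2$: (i) for each of the $k$ affine points $v$ with $\mathrm{Stab}(v) \notin \mathcal{H}_1$, the element $g_v = (A_v, (I-A_v)v)$, with $A_v \in \GL(n,q)$ chosen to have irreducible characteristic polynomial over $\mathbb{F}_q$, has $v$ as its only projective fixed point, so it must be covered by some $V \rtimes M \in \mathcal{H}_2$ with $A_v \in M$; (ii) every element $(A,b) \in G$ with $b \notin \mathrm{Im}(I-A)$ must be covered by $\mathcal{H}_2$, so the projections $\{M : V \rtimes M \in \mathcal{H}_2\}$ must contain $\{A \in \GL(n,q) : \ker(I-A) \ne 0\}$ and, in particular, must contain, for each $1$-dimensional subspace $\langle w\rangle \subset V$, the block-diagonal element $\mathrm{diag}(1, C_w)$ with $C_w \in \GL(n-1,q)$ acting irreducibly on a complement to $\langle w\rangle$. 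If the $A_v$'s can be chosen in pairwise distinct Singer cycle subgroups of $\GL(n,q)$, then (i) forces $k$ distinct non-parabolic maximals into $\mathcal{H}_2$; if each $\mathrm{diag}(1,C_w)$ lies only in the parabolic $P_{\langle w\rangle}$ among maximal subgroups, then (ii) forces all $(q^n-1)/(q-1)$ parabolics into $\mathcal{H}_2$. Since the non-parabolic and parabolic maximals are disjoint, $|\mathcal{H}_2| \ge k + (q^n-1)/(q-1)$, and so $|\mathcal{H}| \ge (q^n-k) + k + (q^n-1)/(q-1) = (q^{n+1}-1)/(q-1)$.

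The main obstacle is the two uniqueness claims used above: that Singer-type $A_v$'s can be placed in pairwise distinct maximal subgroups of $\GL(n,q)$, and that each $\mathrm{diag}(1,C_w)$ lies only in the parabolic $P_{\langle w\rangle}$ among maximal subgroups of $\GL(n,q)$. Both reduce to an Aschbacher-class-by-class analysis, ruling out containment in non-parabolic maximal subgroups of the imprimitive, field-extension, classical, tensor, and $\mathcal{S}$ classes by exploiting the specific conjugacy-class structure of Singer-type and near-Singer elements; this is the technical heart of the argument, and the analogous claims in $\SL(n,q)$ require the same analysis.
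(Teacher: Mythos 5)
Your upper bound is fine and coincides with the paper's cover (the $q^n$ affine-point stabilizers together with the $(q^n-1)/(q-1)$ subgroups $V\rtimes P$ for $P$ a line stabilizer), and the $n=1$ case via Tomkinson is also what the paper does. The lower bound, however, has genuine gaps: both of the ``uniqueness claims'' you defer to an Aschbacher-class analysis are in fact false as stated, not merely unproved. For your claim (i), two elements lying in distinct Singer cycle subgroups of $\GL(n,q)$ need not force distinct maximal overgroups, because a single field-extension maximal subgroup $\GL(n/b,q^b).b$ contains many distinct Singer subgroups; so placing the $A_v$ in pairwise distinct Singer subgroups does not yield $k$ distinct members of $\mathcal{H}_2$. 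For your claim (ii), the element $\mathrm{diag}(1,C_w)$, with $C_w$ acting irreducibly on a complement $W'$ of $\langle w\rangle$, stabilizes $W'$ as well as $\langle w\rangle$, so it lies in the maximal parabolic stabilizing the hyperplane $W'$, and the corresponding affine element $(A,b)$ lies in $V\rtimes \mathrm{Stab}(W')$; hence it does not single out $P_{\langle w\rangle}$, and the conclusion that all $(q^n-1)/(q-1)$ line-stabilizer parabolics are forced does not follow. (A smaller issue: you assert that every complement of $V$ occurring in a cover is an affine-point stabilizer, which needs $H^1(\GL(n,q),V)=0$ and fails, for instance, for $\AGL(3,2)$.)

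The paper's proof sidesteps both problems. For the $q^n$ point stabilizers it constructs no elements at all: it proves $\sigma(\GL(n,q))\ge (q^{n+1}-1)/(q-1)$ using Kantor's theorem that the only maximal subgroups of $\GL(n,q)$ containing Singer cycles are field-extension subgroups (handling $\GL(3,2)\cong\PSL(2,7)$ separately), and then applies Lemma \ref{lem:Garonzi} to force every conjugate of $\GL(n,q)$ into a minimal cover. For the remaining $(q^n-1)/(q-1)$ subgroups it uses fixed-point-free elements $g_v$, namely a Singer cycle of the hyperplane $\phi(v)$ followed by translation by $v$, and shows that $g_{v_1}$ and $g_{v_2}$ with $\langle v_1\rangle\neq\langle v_2\rangle$ generate all of $\AGL(n,q)$; this pairwise-generation statement is exactly the uniqueness input your argument is missing, and the paper imports it from Britnell--Evseev--Guralnick--Holmes--Mar\'oti (their Theorem 4.1(2)) together with case checks for small $(n,q)$ rather than re-deriving it. Since no proper subgroup can contain two pairwise-generating elements, one gets $(q^n-1)/(q-1)$ additional subgroups, all distinct from the point stabilizers. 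To salvage your outline you would need to replace both of your claims by statements of this pairwise-generation type, which is precisely the nontrivial content left unproved in your proposal.
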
 

This result can also be viewed as a generalization of a result of Cohn \cite[Corollary to Lemma 17]{Cohn}. There it is shown that all integers of the form $(q^2 - 1)/(q-1) = q + 1$, where $q$ is a prime power, are covering numbers.

Our hope is to obtain further density results along the lines of Theorem \ref{densityth} for $\sigma$-elementary groups and eventually arrive at a proof of Conjecture \ref{conj:infinite}.  Other than the remaining primitive groups with abelian socle, we would like to highlight the family of wreath products $S \Wr K$, where $S$ is a nonabelian simple group and $K$ is a transitive group of degree $k$.  These groups are an archetypal case among those that remain, and it would be interesting to prove results for this family in particular.  

The structure of this paper is as follows. Section \ref{sect:prelim} contains preparatory results for subsequent sections, especially the following two sections.  Section \ref{sect:density} contains the proof of Theorem \ref{densityth}, the density result for a certain class of $\sigma$-elementary groups, and Section \ref{sect:129} contains the proof of the necessary condition that a nonabelian $\sigma$-elementary group $G$ with $\sigma(G) \le 129$ is a monolithic primitive group with a degree of primitivity of at most $129$.  Section \ref{sect:comp} details the two main computational methods used to determine covering numbers.

The covering numbers or estimates of them are known for some classes of monolithic primitive groups that are candidates to be $\sigma$-elementary groups with covering number at most $129$, and these results are summarized in Section \ref{sect:known}.  The covering number for groups in two other families of monolithic primitive groups, the affine general linear groups and the affine special linear groups, are determined in Section \ref{sect:affine}, which also shows that every integer of the form $(q^n - 1)/(q-1)$, where $q$ is a prime power and $n > 3$, is a covering number.  All our results for calculations and bounds are summarized in tables in Section \ref{sect:tables}, among which we have a list of the nonsolvable $\sigma$-elementary groups with $\sigma(G) \le 129$ in Table \ref{tbl:sigmaelementary}.  If $p$ is a prime and $d$ is a positive integer, then this table together with the integers in this range of the form $p^d + 1$ (that is, the covering numbers of solvable groups by \cite{Tomkinson}) establish Theorem \ref{thm:129}.  Finally, Appendix \ref{sect:calcspecific} contains calculations or bounds for the covering number for the various groups that could not be dealt with using the methods of the previous sections.

\section{Background}
\label{sect:prelim}

In this section for the convenience of the reader we begin with some well-known concepts used throughout this paper and present some relevant definitions and lemmas which can be found in earlier publications addressing this topic.  The uninitiated reader is encouraged to consult \cite{DixonMortimer} or \cite{Robinson}. 

The \textit{socle} of a finite group $G$, denoted $\soc(G)$, is the subgroup of $G$ generated by the minimal normal subgroups of $G$, and, in fact, the socle of $G$ is a direct product of some minimal normal subgroups of $G$. A finite group $G$ is said to be \textit{monolithic} if it admits a unique minimal normal subgroup, which therefore equals the socle of $G$.  

A finite group $G$ is said to be \textit{primitive} if it admits a maximal subgroup $M$ such that $M_G = \bigcap_{g \in G} g^{-1}Mg$ (the \textit{normal core} of $M$) is trivial, and in this case the index $|G:M|$ is called a \textit{primitivity degree} (or \textit{degree of primitivity}) of $G$; a primitive group in general has many primitivity degrees.  This definition for abstract groups is equivalent to the permutation group definition; that is, if $\Omega$ is a finite set, then $G \le \Sym(\Omega)$ is primitive on the set $\Omega$ if and only if $G$ is transitive on and stabilizes no nontrivial partition of $\Omega$.  (The equivalence can be seen by taking the set of right cosets of $M$ in $G$ to be $\Omega$ under the action of right multiplication.)  It is well-known that a finite primitive group $G$ is either monolithic or it admits precisely two minimal normal subgroups, and, in this case, such minimal normal subgroups are nonabelian and isomorphic; see, for instance, \cite[Theorem 1.1.7]{spagn}. 

Any minimal normal subgroup of a finite group $G$ is \textit{characteristically simple}, so it has the form $S^r$ for some simple group $S$ (which could be abelian) and some positive integer $r$. A minimal normal subgroup $N$ of $G$ is said to be \textit{supplemented} if it admits a \textit{supplement} in $G$, that is, a proper subgroup $H$ of $G$ such that $HN=G$, and \textit{complemented} if it admits a \textit{complement} in $G$, that is, a supplement $H$ such that $H \cap N = \{1\}$. The minimal normal subgroup $N$ is said to be \textit{Frattini} if it is contained in the Frattini subgroup of $G$, which is the intersection of the maximal subgroups of $G$, and \textit{non-Frattini}, otherwise. (Note that a Frattini minimal normal subgroup is automatically abelian, since the Frattini subgroup is nilpotent.)  

We now present some results regarding the Frattini subgroup and minimal normal subgroups.  For a minimal normal subgroup $N$ of $G$, being non-Frattini is equivalent to being supplemented, where the supplement of $N$ is any maximal subgroup of $G$ which does not contain $N$.  For an abelian minimal normal subgroup $N$ of $G$, being supplemented is equivalent to being complemented: if $H$ is a supplement of $N$ in $G$, then $H \cap N \neq N$, since $HN=G$; $H \cap N$ is normal in $H$, since $N$ normal in $G$; and $H \cap N$ is normal in $N$, since $N$ is abelian. Hence, we have $N \cap H \unlhd NH = G$, and $N \cap H = \{1\}$, since $N$ a minimal normal subgroup. Also, observe that a monolithic group $G$ is primitive if and only if the Frattini subgroup of $G$ is trivial: indeed, if $G$ is primitive, then it is clear from the definition that the Frattini subgroup of $G$ is trivial; conversely, suppose that $G$ is monolithic with trivial Frattini subgroup.  Since $G$ is monolithic, the socle of $G$ is contained in every nontrivial normal subgroup of $G$, and, since $G$ has trivial Frattini subgroup, there must exist a maximal subgroup with trivial normal core, since, otherwise, the Frattini subgroup would contain the socle.  Hence, $G$ is primitive. 

Following \cite{DetomiLucchini2}, the \textit{primitive monolithic group} $X_N$ associated to a non-Frattini minimal normal subgroup $N$ of a group $G$ is defined as follows, based on whether or not $N$ is abelian:

\begin{itemize}
\item If $N$ is abelian, then there exists a complement $H$ of $N$ in $G$. Then $C_H(N) \unlhd G$ and we define $X_N:=G/C_H(N)$.  
\item If $N$ is nonabelian, then we define $X_N:=G/C_G(N)$.
\end{itemize}

The following observations can be proved easily.  In the case when is $N$ abelian the primitive monolithic group associated to $N$ depends on the choice of complement; however, choosing a different complement gives an isomorphic primitive group.  In any case, $X_N$ is a primitive monolithic group with socle isomorphic with $N$ (the socle is $NC_H(N)/C_H(N)$ in the first case and $NC_G(N)/C_G(N)$ in the second case).  Observe that if $G$ is itself primitive and monolithic, then $G$ coincides with the primitive monolithic group associated with its socle: if $G$ is primitive and monolithic with socle $N$, then the centralizer of $N$ in $G$ is trivial if $N$ is nonabelian and equals $N$ if $N$ is abelian; otherwise, a larger centralizer would give rise to a nontrivial normal subgroup not containing $N$ in both cases.  

Let $\sigma(G)$ denote the covering number of $G$, with $\sigma(G) = \infty$ if $G$ is cyclic (with the convention that $n < \infty$ for all integers $n$). It is easy to see that $\sigma(G) \le \sigma(G/N)$ for all normal subgroups $N$ of $G$, and, with this in mind, we have the following definition.

\begin{defi}
A finite noncyclic group $G$ is said to be \textit{$\sigma$-elementary} if $\sigma(G) < \sigma(G/N)$ for all nontrivial normal subgroups $N$ of $G$. 
\end{defi}

In the following result, $\Phi(G)$ denotes the Frattini subgroup of $G$.

\begin{lem}[\cite{DetomiLucchini} Corollary 14] \label{sigmaprim}
Let $G$ be a finite $\sigma$-elementary group. If $G$ is abelian, then $G \cong C_p \times C_p$ for some prime $p$. If $G$ is nonabelian, then the following hold:
\begin{enumerate}
\item[(1)] The Frattini subgroup $\Phi(G)$ of $G$ is trivial.
\item[(2)] $G$ has at most one abelian minimal normal subgroup;
\item[(3)] Let $\soc(G)=N_1 \times \cdots \times N_n$ be the socle of $G$, where $N_1,\dots,N_n$ are the minimal normal subgroups of $G$. Then $G$ is a subdirect product of the primitive monolithic groups $X_i$ associated to the $N_i$'s, and the natural map from $G$ to $X_1 \times \cdots \times X_n$ given by the natural projections of $G$ onto each $X_i$ is injective.
\end{enumerate}
\end{lem}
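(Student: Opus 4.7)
The plan is to handle the abelian and nonabelian cases separately, using the common principle that producing any nontrivial normal subgroup $N$ with $\sigma(G) = \sigma(G/N)$ contradicts $\sigma$-elementarity.

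For the abelian case, I proceed by a chain of reductions built on the standard formula $\sigma(G) = p + 1$ for a noncyclic finite abelian group, where $p$ is the smallest prime whose Sylow subgroup $G_p$ is noncyclic. This formula successively forces $G$ to be a $p$-group (else the Hall $p'$-subgroup $M$ of $G$ gives $\sigma(G) = \sigma(G/M)$), then elementary abelian (else $\sigma(G) = \sigma(G/\Phi(G))$ from the bijection between maximal subgroups of $G$ and of $G/\Phi(G)$), and finally of rank exactly $2$ (else any subgroup $N \cong C_p$ yields $G/N$ still with covering number $p+1$). Thus $G \cong C_p \times C_p$.

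Part (1) of the nonabelian case is the same Frattini correspondence argument: every maximal subgroup of $G$ contains $\Phi(G)$, so covers of $G$ and of $G/\Phi(G)$ match up, and $\sigma$-elementarity forces $\Phi(G) = 1$. For part (3), I assemble the quotient maps $\phi_i : G \to X_i$ into a single map $\phi : G \to X_1 \times \cdots \times X_n$, which is well-defined because $C_G(N_i)$ (when $N_i$ is nonabelian) and $C_{H_i}(N_i)$ (when $N_i$ is abelian with chosen complement $H_i$) are normal subgroups of $G$; the latter normality is a short semidirect product computation. Each projection is surjective by construction, so it remains to show $\phi$ is injective. Its kernel $K$ is normal in $G$; if $K$ were nontrivial it would contain some minimal normal subgroup $N_j$, but $N_j \cap \ker \phi_j$ is trivial in both cases (equal to $Z(N_j) = 1$ when $N_j$ is nonabelian, and contained in $N_j \cap H_j = 1$ when $N_j$ is abelian), so $N_j \not\subseteq \ker \phi_j \supseteq K$, a contradiction.

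Part (2) is the most delicate step and I expect it to be the main obstacle. My plan is to suppose that $G$ has two distinct abelian minimal normal subgroups $A$ and $B$ and derive a nontrivial normal subgroup $N$ with $\sigma(G) = \sigma(G/N)$. Through the subdirect embedding from (3), the maximal subgroups of $G$ are controlled by the maximal subgroups of the $X_i$'s together with complements of the abelian socles, and the crux is to argue that when both $A$ and $B$ are abelian the affine-type factors $X_A$ and $X_B$ contribute redundantly to any minimal cover, so that one of them may be quotiented out without changing $\sigma(G)$. Making this correspondence between covers of $G$ and covers of its proper quotients precise is the step I expect to require the most care.
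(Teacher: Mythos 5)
The paper offers no proof of this lemma at all --- it is imported verbatim from Detomi--Lucchini as their Corollary 14 --- so your attempt has to be judged on its own. Your abelian case, your part (1), and your part (3) are sound: the reduction chain through $\sigma(G)=p+1$ for noncyclic abelian groups, the Frattini correspondence (after first replacing a minimal cover by one consisting of maximal subgroups, each of which contains $\Phi(G)$), and the kernel computation showing $N_j \cap \ker\phi_j = 1$ in both the abelian and nonabelian cases (together with the observation, which you should make explicit, that part (1) guarantees every $N_i$ is non-Frattini, hence supplemented, hence the $X_i$ are actually defined) all go through.

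Part (2), however, is not proved. What you have written there is a statement of intent --- ``the crux is to argue\dots{} Making this correspondence precise is the step I expect to require the most care'' --- and the route you sketch, showing that one affine factor is ``redundant'' in every minimal cover of $G$, is precisely the hard content of the claim; no mechanism for extracting a proper quotient with the same covering number is actually supplied. A complete argument is short and quantitative rather than cover-by-cover, and can be assembled from results quoted elsewhere in this paper. Suppose $A$ and $B$ are distinct abelian minimal normal subgroups with $|A| \le |B|$. By part (1) both are non-Frattini, hence complemented, and by Lemma \ref{lem:trivcenter} the center of $G$ is trivial, so Lemma \ref{abmns} applied to $V=A$ gives $\sigma(G) \le 2|A|-1$. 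On the other hand, Lemmas \ref{sigmastar}, \ref{indexbound}, and \ref{five} give
\[
\sigma(G) \;\ge\; \sigma^{\ast}(X_A)+\sigma^{\ast}(X_B) \;\ge\; \ell_{X_A}(A)+\ell_{X_B}(B) \;=\; |A|+|B| \;\ge\; 2|A|,
\]
a contradiction. (If you want this to be self-contained you must check that the lower bound $\sum_i \sigma^{\ast}(X_i) \le \sigma(G)$ is proved from the subdirect embedding of part (3) alone and does not secretly invoke part (2); that is the case in Detomi--Lucchini, so there is no circularity, but it deserves a sentence.) Without some such comparison between the upper bound supplied by one abelian factor and a lower bound that sees both factors at once, part (2) --- and hence the lemma --- remains unproved in your write-up.
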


We also note the following additional structural result.

\begin{lem}[\cite{Cohn} Theorem 4]
 \label{lem:trivcenter}
 If $G$ is a nonabelian $\sigma$-elementary group, then the center of $G$ is trivial.
\end{lem}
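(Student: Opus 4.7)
The plan is to derive a contradiction by assuming that $G$ is a nonabelian $\sigma$-elementary group with $Z(G) \neq 1$.

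First I would extract a central subgroup of prime order: pick $z \in Z(G)$ of prime order $p$ and set $N = \langle z \rangle$. Since $N \le Z(G)$, $N$ is an abelian minimal normal subgroup of $G$, and by Lemma \ref{sigmaprim}(2) it is the unique one. Lemma \ref{sigmaprim}(1) gives $\Phi(G) = 1$, so $N$ is non-Frattini and hence supplemented; for an abelian minimal normal subgroup, supplementation is equivalent to complementation, so there exists $H \le G$ with $HN = G$ and $H \cap N = 1$. Because $N$ is central, $H$ centralizes $N$, yielding the direct product decomposition $G = H \times N$, in which $H$ is necessarily nonabelian.

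Next I would analyze a minimum cover. Let $\mathcal{C}$ be a minimum cover of $G$ by maximal subgroups; since $|N|=p$ is prime, each $K \in \mathcal{C}$ satisfies either $N \le K$ (put $K$ in $\mathcal{C}_1$) or $K \cap N = 1$ (put $K$ in $\mathcal{C}_2$). By maximality the elements of $\mathcal{C}_2$ are exactly the complements of $N$ in $G$, and via $G = H \times N$ these are parametrized by $\mathrm{Hom}(H, N)$ as graph subgroups $K_\phi = \{(h,\phi(h)) : h \in H\}$. The key observation is that $z \in K$ if and only if $N \le K$, since any nontrivial element of $N$ generates $N$; hence the need to cover $z$ forces $|\mathcal{C}_1| \ge 1$. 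If $|\mathcal{C}_2| = 0$ then $\{K/N : K \in \mathcal{C}\}$ is a cover of $G/N$ of size $\sigma(G)$, contradicting $\sigma(G) < \sigma(G/N)$. Hence $|\mathcal{C}_2| \ge 1$, and since $\mathcal{C}$ is minimum some $K_\phi \in \mathcal{C}_2$ is essential. The subgroups in $\mathcal{C}_1$ have the form $K_H \times N$ (a union of $N$-cosets), so essentiality of $K_\phi$ produces an element $(h_0, \phi(h_0)) \in K_\phi$ whose entire $N$-coset $\{h_0\} \times N$ is disjoint from $\bigcup \mathcal{C}_1$. Each complement meets this coset in exactly one element, so covering its $p$ elements using only $\mathcal{C}_2$ forces $|\mathcal{C}_2| \ge p$.

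Finally I would derive the contradiction via the abelianization. Since $|\mathcal{C}_2| \ge p \ge 2$, there are at least two distinct complements to $N$, so $\mathrm{Hom}(H, N) \neq 0$ and $H/[H,H]$ has an element of order $p$. Consequently $G/[G,G] = H/[H,H] \times N$ surjects onto $C_p \times C_p$, producing a surjection $\pi : G \twoheadrightarrow C_p \times C_p$; its kernel is nontrivial because $H$ is nonabelian with $p \mid |H|$, which forces $|H| \ge 2p$. The $\sigma$-elementary property then gives $\sigma(G) < \sigma(C_p \times C_p) = p+1$, i.e., $\sigma(G) \le p$. However $\sigma(G) = |\mathcal{C}_1| + |\mathcal{C}_2| \ge 1 + p$, which is the desired contradiction. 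I anticipate that the main subtlety lies in the bad-coset bookkeeping that establishes $|\mathcal{C}_2| \ge p$ once a complement enters the cover; after that, the argument closes cleanly using Lemma \ref{sigmaprim} and the standard identity $\sigma(C_p \times C_p) = p+1$.
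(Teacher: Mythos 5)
Your argument is correct. Note that the paper itself gives no proof of this lemma --- it is quoted directly from Cohn's paper (Theorem 4 of \cite{Cohn}) --- so there is nothing internal to compare against; what you have written is a valid self-contained reconstruction. The chain of deductions checks out: a central element of prime order $p$ generates an abelian minimal normal subgroup $N$, which is complemented because $\Phi(G)=1$ (only part (1) of Lemma \ref{sigmaprim} is needed here, and that part is elementary since $\sigma(G)=\sigma(G/\Phi(G))$ always, so there is no circularity with Cohn's result); centrality upgrades the complement to a direct factor $G=H\times N$; a minimal cover by maximal subgroups splits into subgroups containing $N$ (unions of $N$-cosets) and complements of $N$ (each meeting every $N$-coset in at most one point, exactly the counting that underlies Lemma \ref{lem:tom3.2} with $\beta_i=p$); irredundancy then forces at least $p$ complements plus at least one subgroup containing $z$, while two distinct complements force a surjection onto $C_p\times C_p$ with nontrivial kernel, giving $\sigma(G)<p+1$ and the contradiction. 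The only cosmetic remarks: the appeal to Lemma \ref{sigmaprim}(2) for uniqueness of the abelian minimal normal subgroup is never used, and the lower bound $|\mathcal{C}_2|\ge p$ only needs ``at most one element per coset,'' not ``exactly one''; neither affects correctness.
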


The following definition provides a concept that is useful for bounding the covering number of a group from below.

\begin{defi}[\cite{DetomiLucchini} Definition 15] \label{defstar}
Let $X$ be a primitive monolithic group with socle $N$. If $\Omega$ is an arbitrary union of cosets of $N$ in $X$, define $\sigma_{\Omega}(X)$ to be the smallest number of supplements of $N$ in $X$ needed to cover $\Omega$. Define $$\sigma^{\ast}(X) := \min \{\sigma_{\Omega}(X)\ |\ \Omega = \bigcup_i \omega_i N,\ \langle \Omega \rangle = X \}.$$
\end{defi}

The values of $\sigma^{\ast}(X_i)$, where the $X_i$ are as in Lemma \ref{sigmaprim}, provide a lower bound for $\sigma(G)$ when $G$ is a $\sigma$-elementary group in terms of the primitive monolithic groups associated to its minimal normal subgroups. The following is a useful lower bound.

\begin{lem}[\cite{DetomiLucchini} Proposition 16] \label{sigmastar}
Let $G$ be a nonabelian group, let $\soc(G) = N_1 \times \ldots \times N_n$, and let $X_1,\ldots,X_n$ be the primitive monolithic groups associated to $N_1, \ldots, N_n$, respectively. If $G$ is $\sigma$-elementary, then $$\sigma^{\ast}(X_1) + \ldots + \sigma^{\ast}(X_n) \le \sigma(G).$$
\end{lem}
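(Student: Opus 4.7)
The plan is to start from a minimal cover $\mathcal{H}$ of $G$ with $|\mathcal{H}| = \sigma(G)$, where I may assume every member is maximal, and, for each $i$, extract a lower bound of $\sigma^{\ast}(X_{i})$ on a specific sub-collection of $\mathcal{H}$. For each $i$, I partition $\mathcal{H} = \mathcal{H}_{i}^{+} \sqcup \mathcal{H}_{i}^{-}$ according to whether $H$ contains $N_{i}$. Since each $H \in \mathcal{H}_{i}^{-}$ is maximal and $HN_{i}$ properly contains $H$, we have $HN_{i} = G$, so every member of $\mathcal{H}_{i}^{-}$ is a supplement of $N_{i}$.

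Next, I define the leftover set $W_{i} := G \setminus \bigcup \mathcal{H}_{i}^{+}$, a union of $N_{i}$-cosets that is necessarily covered by $\mathcal{H}_{i}^{-}$. The set $W_{i}$ is nonempty: otherwise $\mathcal{H}_{i}^{+}$ covers $G$, and projecting to $G/N_{i}$ would yield a cover of size at most $\sigma(G)$, contradicting the $\sigma$-elementary hypothesis $\sigma(G/N_{i}) > \sigma(G)$. Using the surjection $\phi_{i} : G \to X_{i}$ from Lemma \ref{sigmaprim}(3), set $\Omega_{i} := \phi_{i}(W_{i})$, a nonempty union of $\soc(X_{i})$-cosets covered by $\{\phi_{i}(H) : H \in \mathcal{H}_{i}^{-}\}$; each proper such image is automatically a supplement of $\soc(X_{i})$. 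I then claim $\langle \Omega_{i} \rangle = X_{i}$: otherwise some maximal $M \le X_{i}$ contains $\Omega_{i}$. If $\soc(X_{i}) \not\le M$, then no coset $x\soc(X_{i})$ can be contained in $M$ (because $x \in M$ would force $\soc(X_{i}) \le M$), contradicting $\Omega_{i} \subseteq M$. If instead $\soc(X_{i}) \le M$, then $\phi_{i}^{-1}(M)$ is a proper subgroup of $G$ containing $N_{i}$ that covers $W_{i}$; replacing $\mathcal{H}_{i}^{-}$ by $\{\phi_{i}^{-1}(M)\}$ yields either a strictly smaller cover of $G$, contradicting minimality of $\mathcal{H}$, or (if $|\mathcal{H}_{i}^{-}| = 1$) a cover of the same size in which every subgroup contains $N_{i}$, contradicting the $\sigma$-elementary hypothesis via projection to $G/N_{i}$. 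Degenerate cases in which $\phi_{i}(H) = X_{i}$ can be handled by replacing $H$ with a preimage $\phi_{i}^{-1}(M')$ for a maximal $M' \le X_{i}$ containing $\phi_{i}(H)$, which remains a supplement of $N_{i}$. This yields $\sigma^{\ast}(X_{i}) \le \sigma_{\Omega_{i}}(X_{i}) \le |\mathcal{H}_{i}^{-}|$.

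The hard part will be summing these estimates. The naive inequality $\sum_{i} |\mathcal{H}_{i}^{-}| \le |\mathcal{H}|$ fails in general, because a single maximal $H \in \mathcal{H}$ can supplement several $N_{i}$ simultaneously---think of diagonal-type subgroups when several socle factors are isomorphic. To remedy this, rather than counting $|\mathcal{H}_{i}^{-}|$ itself I would count, for each $i$, a sub-collection $\mathcal{A}_{i} \subseteq \mathcal{H}_{i}^{-}$ just large enough to force its image in $X_{i}$ to cover some generating union of $\soc(X_{i})$-cosets (so that $|\mathcal{A}_{i}| \ge \sigma^{\ast}(X_{i})$), and I would try to choose these sub-collections to be pairwise disjoint. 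Constructing such disjoint $\mathcal{A}_{i}$---exploiting the flexibility in the definition of $\sigma^{\ast}$ as a minimum over all generating unions of cosets rather than a fixed one, together with the minimality of $\mathcal{H}$ and the $\sigma$-elementary hypothesis---is the main technical obstacle, and I expect it is precisely the step where the definition of $\sigma^{\ast}$ (as opposed to the plain covering number $\sigma$) is essential for the argument to close.
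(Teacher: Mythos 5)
The paper does not prove this lemma; it is imported verbatim from Detomi--Lucchini (their Proposition 16), so there is no in-paper argument to compare against. Judged on its own terms, your proposal is not yet a proof: you set up the right framework but explicitly leave the decisive step unproven. The per-index bound $\sigma^{\ast}(X_i)\le|\mathcal{H}_i^{-}|$ is fine in spirit (the nonemptiness of $W_i$, the generation argument for $\Omega_i$, and the use of the $\sigma$-elementary hypothesis are all correct), but the lemma is the \emph{sum} over $i$, and you candidly admit that producing pairwise disjoint subcollections $\mathcal{A}_i\subseteq\mathcal{H}_i^{-}$ is "the main technical obstacle" that you have not overcome. That obstacle is genuine: a maximal subgroup $H$ can supplement two distinct socle factors (it cannot supplement three, since $G/H_G$ is primitive and so has at most two minimal normal subgroups, but two already ruins the naive count $\sum_i|\mathcal{H}_i^{-}|\le|\mathcal{H}|$ by a factor of $2$).

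Moreover, your treatment of the degenerate case $\phi_i(H)=X_i$ is vacuous as written: if $\phi_i(H)=X_i$ there is no proper maximal $M'\le X_i$ containing $\phi_i(H)$, so the proposed replacement does not exist. This is not a fringe case --- it is exactly the double-supplementation problem in disguise. If $H$ supplements both $N_i$ and $N_{i'}$ with $i\ne i'$, then $N_{i'}\le C_G(N_i)\le\ker\phi_i$, so $\phi_i(H)\supseteq\phi_i(H)\phi_i(N_{i'})=\phi_i(HN_{i'})=\phi_i(G)=X_i$; such an $H$ contributes nothing usable toward covering $\Omega_i$ by proper supplements of $\soc(X_i)$, yet it may be the only member of $\mathcal{H}$ covering some coset of $W_i$. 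So the two difficulties you flag are one and the same, and resolving them --- via the irredundancy of the minimal cover and the freedom in the definition of $\sigma^{\ast}$ as a minimum over generating unions of cosets --- is the actual content of Detomi--Lucchini's proof. As it stands, your argument establishes only $\max_i\sigma^{\ast}(X_i)\le\sigma(G)$ (essentially Lemma \ref{indexbound} territory), not the claimed sum.
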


For a primitive monolithic group $X$ with socle $N$, we denote by $\ell_X(N)$ the minimal index of a proper supplement of $N$ in $X$.  In other words, $\ell_X(N)$ is the smallest primitivity degree of $X$.

\begin{lem}[\cite{DetomiLucchini} Remark 17] \label{indexbound}
If $X$ is a primitive monolithic group, then $$\sigma^{\ast}(X) \ge \ell_X(\soc(X)).$$If $G$ is a nonabelian $\sigma$-elementary group, $N_1,\ldots,N_n$ are the minimal normal subgroups of $G$, and $X_1,\ldots,X_n$ are the primitive monolithic groups associated to $N_1,\ldots,N_n$, respectively, then $$\sum_{i=1}^n \ell_{X_i}(N_i) \le \sum_{i=1}^n \sigma^{\ast}(X_i) \le \sigma(G).$$ In particular, for every $i \in \{1,\ldots,n\}$, the group $X_i$ has primitivity degree at most $\sigma(G)$.
\end{lem}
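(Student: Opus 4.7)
The lemma has three assertions: (i) the bound $\sigma^{\ast}(X) \ge \ell_X(\soc(X))$ for any primitive monolithic group $X$, (ii) the displayed chain of inequalities, and (iii) the ``in particular'' clause. The second follows from (i) applied termwise together with Lemma \ref{sigmastar}, and the third is immediate from (ii) upon recalling that $\ell_{X_i}(N_i)$ is by definition the smallest primitivity degree of $X_i$. So the only real content to establish is (i).

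For (i), set $N := \soc(X)$, fix a union of cosets $\Omega = \bigcup_i \omega_i N$ with $\langle \Omega \rangle = X$ (so in particular $\Omega$ is nonempty), and pick any one of its cosets $xN \subseteq \Omega$. Suppose $H_1,\dots,H_k$ are supplements of $N$ in $X$ whose union covers $\Omega$; in particular they must cover the single coset $xN$. The key observation is a coset-counting identity: for any supplement $H$ of $N$, writing an element of $xN$ as $hn$ with $h \in H$ and $n \in N$ (possible because $HN = X$) gives $xN = hN$, so $H \cap xN = H \cap hN = h(H \cap N)$, and then the standard formula $|HN| = |H||N|/|H \cap N|$ applied with $HN = X$ yields
\begin{equation*}
|H \cap xN| = |H \cap N| = \frac{|N|}{[X:H]} \le \frac{|N|}{\ell_X(N)},
\end{equation*}
where the inequality uses the definition of $\ell_X(N)$ as the minimal index of a proper supplement. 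Since $\bigcup_i (H_i \cap xN) = xN$ has cardinality $|N|$, summing sizes forces $k \cdot |N|/\ell_X(N) \ge |N|$, hence $k \ge \ell_X(N)$. Taking the minimum over admissible $\Omega$ then gives $\sigma^{\ast}(X) \ge \ell_X(N)$, which is (i).

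I do not anticipate any real obstacle: the entire proof reduces to the identity $|H \cap xN| = |N|/[X:H]$, which is just $|HN| = |H||N|/|H \cap N|$ specialised to a supplement, and then a pigeonhole on one arbitrary coset. The only tiny subtlety is observing that every admissible $\Omega$ is nonempty (forced by $\langle \Omega \rangle = X$), so the pigeonhole argument on a single $xN \subseteq \Omega$ yields a lower bound that is uniform in $\Omega$. Once (i) is in hand, parts (ii) and (iii) are pure bookkeeping.
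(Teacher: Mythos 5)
Your proof is correct. The paper states this lemma as an import from Detomi--Lucchini (their Remark 17) and supplies no proof of its own, so there is nothing to compare against; your argument --- the identity $|H \cap xN| = |H \cap N| = |N|/[X:H]$ for a supplement $H$, a pigeonhole on a single coset contained in $\Omega$, and then Lemma \ref{sigmastar} together with the fact that $\ell_{X_i}(N_i)$ is the smallest primitivity degree of $X_i$ for the remaining assertions --- is exactly the standard argument underlying that remark.
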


The following lemmas from \cite{DetomiLucchini} are critical in later proofs. Note that Lemma \ref{abmns} below includes information contained in the proof as well as the statement of \cite[Proposition 10]{DetomiLucchini}.

\begin{lem}[\cite{DetomiLucchini} Lemma 18] \label{spancoprime}
Let $N$ be a normal subgroup of a group $X$. If a set of subgroups of $X$ covers a coset $yN$ of $N$ in $X$, then it also covers every coset $y^{\alpha}N$ with $\alpha$ prime to $|y|$.
\end{lem}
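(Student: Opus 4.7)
The plan is to fix an arbitrary element $w \in y^{\alpha}N$ and exhibit a subgroup in the given cover $\mathcal{H}$ of $yN$ that contains $w$. The main idea is to recover $w$ from an element of $yN$ by taking a suitable power. Passing to the quotient $X/N$, write $\overline{x} := xN$; since $w \in y^{\alpha}N$ we have $\overline{w} = \overline{y}^{\alpha}$, and because $|\overline{y}|$ divides $|y|$, the hypothesis $\gcd(\alpha, |y|) = 1$ yields $\gcd(\alpha, |\overline{y}|) = 1$.

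Next I would choose an integer $\beta$ with the two properties $(\mathrm{a})$ $\alpha\beta \equiv 1 \pmod{|\overline{y}|}$ and $(\mathrm{b})$ $\gcd(\beta, |w|) = 1$. Property (a) gives $\overline{w}^{\beta} = \overline{y}^{\alpha\beta} = \overline{y}$, so $w^{\beta} \in yN$; by hypothesis, $w^{\beta} \in H$ for some $H \in \mathcal{H}$, and hence $\langle w^{\beta} \rangle \subseteq H$. Property (b) gives $\langle w^{\beta} \rangle = \langle w \rangle$, so $w \in H$, which is the desired conclusion.

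The only nontrivial point is producing such a $\beta$, and this is a Chinese Remainder Theorem exercise. Condition (a) pins down $\beta$ modulo $|\overline{y}|$ to the residue class $\alpha^{-1}$; since $\alpha^{-1}$ is a unit modulo $|\overline{y}|$, it is automatically coprime to every prime dividing $|\overline{y}|$, so (b) is automatic at such primes. For primes $p$ dividing $|w|$ but not $|\overline{y}|$, condition (a) places no restriction on $\beta \bmod p$, so one may independently impose $\beta \not\equiv 0 \pmod{p}$. CRT then assembles these local conditions into a single integer $\beta$ satisfying both (a) and (b). The main---and really only---subtlety in the proof is executing this CRT step while keeping orders in $X$ distinct from orders in the quotient $X/N$; once $\beta$ is in hand the argument is immediate, and since $w \in y^{\alpha}N$ was arbitrary, $\mathcal{H}$ covers $y^{\alpha}N$.
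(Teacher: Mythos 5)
Your proof is correct, and it is essentially the standard argument for this lemma (the paper itself only cites \cite{DetomiLucchini} and gives no proof): pass to $X/N$, invert $\alpha$ modulo $|\overline{y}|$ while keeping $\beta$ coprime to $|w|$ via CRT, land $w^{\beta}$ in $yN$, and use $\langle w^{\beta}\rangle=\langle w\rangle$. The only implicit hypothesis is that $|w|$ is finite, which holds here since the paper works exclusively with finite groups.
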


\begin{lem} [\cite{DetomiLucchini} Proposition 21] \label{solcyc}
Let $G$ be a nonabelian $\sigma$-elementary group. If a proper quotient $G/N$ is solvable, then it is cyclic.
\end{lem}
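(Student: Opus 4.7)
The plan is to argue by contradiction using Tomkinson's theorem on the covering numbers of noncyclic solvable groups. Suppose $G$ is a nonabelian $\sigma$-elementary group, $N$ is a nontrivial normal subgroup of $G$, and the proper quotient $G/N$ is solvable but not cyclic; I will show this is impossible by producing conflicting bounds on $\sigma(G)$.

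First, I would apply Tomkinson's theorem to $G/N$. This gives $\sigma(G/N)=p^d+1$ for some prime $p$ and integer $d\ge 1$, and identifies a minimum cover of $G/N$ coming from an elementary abelian chief factor $H/K$ of $G/N$ (with $N\le K<H\le G$) of order $p^d$ having more than one complement in $G/K$. Every subgroup in this minimum cover contains $K/N$, so the cover descends to a cover of $G/K$ of size $p^d+1$; reapplying Tomkinson to $G/K$ yields $\sigma(G/K)=p^d+1$ (any strictly smaller complemented chief factor of $G/K$ would equally be one of $G/N$, contradicting the minimality of $p^d$ there). Because $K\supseteq N\ne\{1\}$, the $\sigma$-elementary hypothesis forces $\sigma(G)<\sigma(G/K)=p^d+1$, that is, $\sigma(G)\le p^d$.

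The second step, which is the heart of the argument, is the matching lower bound $\sigma(G)\ge p^d+1$. For any cover $\{C_1,\ldots,C_m\}$ of $G$ with $m\le p^d$, not every $C_i$ can contain $K$, since otherwise the cover would descend to a cover of $G/K$ of size $m\le p^d<\sigma(G/K)$. I would then analyze the restriction of the cover to $H$ and its $K$-cosets, using the Dedekind identity $H\cap C_iK=K(H\cap C_i)$ together with Lemma~\ref{sigmaprim} (the subdirect product structure of $\sigma$-elementary groups), and apply Lemma~\ref{sigmastar} and Lemma~\ref{indexbound} to the primitive monolithic quotient $G/C_G(H/K)$, which has socle of order $p^d$ and primitivity degree $p^d$. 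The multiple-complement condition on $H/K$ together with these inputs should force $\sigma(G)\ge p^d+1$, contradicting the upper bound from the first step.

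The main obstacle is precisely this second step: bridging from the statement that $H/K$ is a chief factor of $G/K$ with multiple complements to a concrete lower bound on $\sigma(G)$, since Lemma~\ref{sigmastar} directly controls only the primitive monolithic groups attached to actual minimal normal subgroups of $G$, not to arbitrary chief factors. A natural resolution is a case split on the minimal normal subgroup $N_0$ of $G$ contained in $H$: when $N_0$ is abelian, it is unique by Lemma~\ref{sigmaprim}(2), and $H/K$ sits cleanly inside the associated primitive monolithic quotient $X_0$ so that Lemma~\ref{sigmastar} yields the bound; when $N_0$ is nonabelian, its perfectness together with the solvability of $G/N$ forces $\soc(G)\le N$, reducing the analysis to the quotient $G/\soc(G)$, which is solvable and amenable to a direct comparison via the subdirect structure of $G$.
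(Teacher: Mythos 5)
The statement is quoted from Detomi--Lucchini [Proposition 21] and the paper supplies no proof of its own, so your attempt has to stand on its own. Your first step is fine: Tomkinson's theorem gives $\sigma(G/N)=p^d+1$ with $p^d=|H/K|$, the reduction to $G/K$ is legitimate, and the $\sigma$-elementary hypothesis then forces $\sigma(G)\le p^d$. The problem is that the entire content of the lemma lies in the second step, the lower bound $\sigma(G)\ge p^d+1$, and you do not prove it; you only name the tools you hope will produce it, and the specific route you sketch does not work. The object you call ``the primitive monolithic quotient $G/C_G(H/K)$'' is not what you need: since $H\le C_G(H/K)$, the group $G/C_G(H/K)$ is just the linear group induced on $H/K$ and contains no copy of $H/K$ at all, so it does not have socle of order $p^d$. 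The primitive monolithic group associated to the complemented chief factor $H/K$ is the semidirect product $(H/K)\rtimes\bigl(G/C_G(H/K)\bigr)$, and while this \emph{is} a homomorphic image of $G$ (quotient by $U\cap C_G(H/K)$ for a complement $U/K$ of $H/K$ in $G/K$), being a quotient only yields the upper bound $\sigma(G)\le\sigma(\text{quotient})$ --- the wrong direction. Lemma~\ref{sigmastar} cannot be substituted in, because it applies only to the primitive monolithic groups attached to the \emph{minimal normal subgroups} of $G$, whereas the chief factor $H/K$ sits above $K\supseteq N\neq\{1\}$, far from the socle.

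Your proposed repair via a case split on ``the minimal normal subgroup $N_0$ of $G$ contained in $H$'' does not close this gap. There may be several such $N_0$, they may all lie inside $K$, and none of them need be $G$-isomorphic (or related in any way) to $H/K$; so in the abelian case there is no reason for $H/K$ to ``sit inside'' the associated $X_0$, and Lemma~\ref{sigmastar} tells you nothing involving $|H/K|$. In the nonabelian case, perfectness of $N_0$ gives only $N_0\le N$, not $\soc(G)\le N$ (the possible abelian minimal normal subgroup could still avoid $N$), and even granting $\soc(G)\le N$, the quotient $G/\soc(G)$ need not be solvable --- only $G/N$ is, and there can be nonabelian composition factors between $\soc(G)$ and $N$. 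So the heart of the argument is missing, and the patches you propose rest on false or unjustified reductions.
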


\begin{lem} [\cite{DetomiLucchini} Proposition 10] \label{abmns}
Let $G$ be a group. If $V$ is a complemented normal abelian subgroup of $G$ and $V \cap Z(G) = \{1\}$, then $\sigma(G) \le 2|V|-1$.  In particular, if $V$ is a minimal normal subgroup, where $q = |{\rm End}_G(V)|$ and $|V| = q^n$, and $H$ is a complement of $V$ in $G$, then the collection
\[ \{H^v : v \in V\} \cup \{C_H(W)V : W \le V, \dim_{\GF(q)}(W) = 1\}\]
is a cover for $G$ and
\[\sigma(G) \le 1 + q + \cdots + q^n = \frac{q^{n+1}-1}{q-1}.\]
\end{lem}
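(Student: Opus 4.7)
The plan is to construct the cover explicitly, as given in the statement. Fix a complement $H$ of $V$, so $G = HV$ with $H \cap V = \{1\}$ and every element of $G$ has a unique expression $hv$ with $h \in H$, $v \in V$. For $v_0 \in V$, a direct calculation (using that $V$ is abelian and normal) yields $v_0^{-1} h v_0 = h \cdot [h, v_0]$ with $[h, v_0] \in V$. Writing the action of $H$ on $V$ additively, this shows that $g = hv$ belongs to the conjugate $H^{v_0}$ if and only if $v$ lies in the image of the endomorphism $h - 1 : V \to V$ sending $v_0 \mapsto v_0^h - v_0$. For the second family, I observe that for each $v \in V \setminus \{1\}$ the subgroup $C_H(v) V$ is proper in $G$: if $C_H(v) = H$, then $v$ is centralized by $H$ and also by $V$ (as $V$ is abelian), so $v \in Z(G)$, contradicting $V \cap Z(G) = \{1\}$.

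I then verify that
\[\mathcal{C} := \{H^v : v \in V\} \cup \{C_H(v) V : v \in V \setminus \{1\}\}\]
covers $G$. For $g = hv$ with $h = 1$ this is immediate. For $h \ne 1$ there is a clean dichotomy: either $C_V(h) \ne \{1\}$, in which case any nontrivial fixed vector $u$ gives $h \in C_H(u)$ and so $g \in C_H(u) V$; or $C_V(h) = \{1\}$, in which case $h - 1$ is injective and hence bijective on the finite group $V$, so $v$ lies in its image and $g \in H^{v_0}$ for the corresponding $v_0$. Counting the subgroups gives $\sigma(G) \le |V| + (|V| - 1) = 2|V| - 1$.

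When $V$ is moreover a minimal normal subgroup, Schur's lemma together with Wedderburn's theorem on finite division rings identifies $\mathrm{End}_G(V)$ with a finite field $\GF(q)$, so $V$ becomes an $n$-dimensional $\GF(q)$-vector space with $|V| = q^n$. Because the $H$-action on $V$ commutes with $\mathrm{End}_G(V)$, whenever some $h \in H$ fixes a nonzero $v \in V$ it also fixes the entire line $W = \GF(q) v$ pointwise, so $h \in C_H(W)$. This lets one replace the family $\{C_H(v) V : v \ne 1\}$ by the smaller family $\{C_H(W) V : W \le V,\ \dim_{\GF(q)} W = 1\}$ of cardinality $(q^n - 1)/(q - 1)$, still preserving the cover property. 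Adding the $|V| = q^n$ conjugates of $H$ yields the total
\[q^n + \frac{q^n - 1}{q - 1} = \frac{q^{n+1} - 1}{q - 1},\]
as required. The main obstacle is the cover-verification step for $h \ne 1$, which hinges on the dichotomy between $h$ having or not having nontrivial fixed points on $V$; once that is in hand, the $\GF(q)$-refinement via Schur's lemma is a routine accompanying step.
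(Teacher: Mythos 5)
Your proof is correct and is essentially the standard argument behind this result: the paper itself gives no proof, citing \cite{DetomiLucchini} (Proposition 10), and your construction — conjugates of $H$ catching the elements $hv$ with $C_V(h)=\{1\}$ via surjectivity of $h-1$, the subgroups $C_H(W)V$ catching those with nontrivial fixed points, properness from $V\cap Z(G)=\{1\}$, and the Schur--Wedderburn refinement from point stabilizers to line stabilizers — is exactly the argument underlying that citation. The only cosmetic slip is the phrase ``belongs to $H^{v_0}$ if and only if $v$ lies in the image of $h-1$,'' which conflates membership in a fixed conjugate with membership in some conjugate, but your subsequent use is the correct one.
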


Finally, the following lemmas prove to be extremely useful when calculating covering numbers.  The first lemma is a straightforward criterion for showing that a maximal subgroup is contained in any minimal cover containing only maximal subgroups.

\begin{lem}\cite[Lemma 1]{Garonzi3}
\label{lem:Garonzi}
 If $H$ is a maximal subgroup of a group $G$ and $\sigma(H) > \sigma(G)$, then $H$ appears in every minimal cover of $G$ containing only maximal subgroups.  In particular, if $H$ is maximal and non-normal then $\sigma(H) < [G:H]$ implies $\sigma(G) \ge \sigma(H)$.
\end{lem}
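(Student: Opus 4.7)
The plan is to prove the two assertions separately, via short counting arguments.

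For the first assertion, I will argue by contrapositive. Let $\mathcal{M} = \{M_1, \dots, M_n\}$ be a minimal cover of $G$ consisting of maximal subgroups, so $n = \sigma(G)$, and suppose $H \notin \mathcal{M}$. The key observation is that whenever $H$ and $M_i$ are two distinct maximal subgroups of $G$, neither can contain the other; indeed, if $H \subseteq M_i$, maximality of $H$ would force $H = M_i$. Hence each intersection $H \cap M_i$ is a proper subgroup of $H$. Since $\mathcal{M}$ covers $G$, and in particular covers $H$, the family $\{H \cap M_i : 1 \le i \le n\}$ is a cover of $H$ by $n$ proper subgroups, giving $\sigma(H) \le n = \sigma(G)$. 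Contrapositively, $\sigma(H) > \sigma(G)$ forces $H \in \mathcal{M}$, which is the first assertion.

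For the ``in particular'' statement, I will argue by contradiction. Assume $H$ is maximal and non-normal with $\sigma(H) < [G:H]$, and suppose toward a contradiction that $\sigma(G) < \sigma(H)$. I first note that any minimal cover of $G$ may be taken to consist of maximal subgroups: replacing any subgroup of a cover by a maximal overgroup preserves coverage and does not increase the size. Now every conjugate $H^g$ is a maximal subgroup with $\sigma(H^g) = \sigma(H) > \sigma(G)$, so the first assertion applied to each $H^g$ forces every $H^g$ to lie in every minimal cover of $G$. Because $H$ is maximal and non-normal, $N_G(H)$ must be either $H$ or $G$, and the latter is excluded, so $N_G(H) = H$ and $H$ has exactly $[G:H]$ distinct conjugates. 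Consequently $\sigma(G) \ge [G:H] > \sigma(H) > \sigma(G)$, a contradiction.

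There is no serious obstacle here: the argument rests on two elementary facts --- distinct maximal subgroups cannot contain one another, and a non-normal maximal subgroup is self-normalizing --- so the main thing to remember is the reduction of minimal covers to covers by maximal subgroups when invoking the first assertion inside the proof of the second.
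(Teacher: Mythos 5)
Your proof is correct and follows the standard argument for this result (the paper only cites it from \cite{Garonzi3}, where the proof proceeds in the same way): intersecting $H$ with the members of a minimal cover by maximal subgroups not containing $H$ yields a cover of $H$ by at most $\sigma(G)$ proper subgroups, and the second claim follows by counting the $[G:H]$ conjugates of the self-normalizing subgroup $H$, all of which must appear in such a cover. Both elementary facts you rely on (distinct maximal subgroups cannot contain one another, and a minimal cover may be assumed to consist of maximal subgroups) are justified correctly.
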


The final lemma of this section is due to Detomi and Lucchini (also proved independently by S. M. Jafarian Amiri \cite{Amiri}) and is useful when proving results about covering numbers of primitive groups with an elementary abelian minimal normal subgroup.

\begin{lem}\cite[Corollary 6]{DetomiLucchini}
\label{lem:Amiri}
If $G$ is a primitive group with stabilizer $H$ and unique abelian minimal normal subgroup $N$, then $\sigma(G) \ge |N| + 1$ or $\sigma(G) = \sigma(H)$.
\end{lem}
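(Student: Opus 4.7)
The plan is to classify the maximal subgroups of $G$ and then analyze any minimal cover by splitting it according to that classification. First, because $G$ is primitive with $N$ its unique abelian minimal normal subgroup, $G$ is monolithic with $\soc(G) = N$, and $G = N \rtimes H$: the stabilizer $H$ is a core-free maximal subgroup, and $H \cap N$ is normal in both $H$ and in the abelian $N$, hence a proper $G$-invariant subgroup of the minimal $N$, and therefore trivial. The same argument shows that every maximal subgroup $M$ of $G$ either contains $N$ or satisfies $MN = G$ and $M \cap N = 1$; the latter type are precisely the complements of $N$. I would then record the geometric fact that each such complement $K$ meets every coset $gN$ in exactly one element, since $K \cap N = 1$ and $G = KN$ force the $|G/N| = |K|$ cosets of $N$ to be in bijection with $K$ via $k \mapsto kN$.

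Take now a minimal cover $\mathcal{C}$ of $G$ and replace each member by a maximal overgroup; the resulting family is still a cover of size $\sigma(G)$, so I may assume $\mathcal{C}$ consists of maximal subgroups. Partition $\mathcal{C} = \mathcal{C}_1 \sqcup \mathcal{C}_2$, where $\mathcal{C}_1$ consists of the members containing $N$ and $\mathcal{C}_2$ of the complements of $N$. Since every complement intersects $N$ only in the identity, the $|N| - 1$ nontrivial elements of $N$ can only be covered by subgroups in $\mathcal{C}_1$, so $\mathcal{C}_1 \ne \emptyset$.

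The dichotomy in the statement then arises from whether $\mathcal{C}_2$ is empty. If $\mathcal{C}_2 = \emptyset$, the members of $\mathcal{C} = \mathcal{C}_1$ project to distinct proper subgroups of $G/N \cong H$ whose union covers $G/N$, giving $\sigma(G) = |\mathcal{C}| \ge \sigma(H)$; combined with the general bound $\sigma(G) \le \sigma(G/N) = \sigma(H)$, this forces $\sigma(G) = \sigma(H)$. If instead $\mathcal{C}_2 \ne \emptyset$, the minimality of $\mathcal{C}$ rules out every coset of $N$ being covered by $\mathcal{C}_1$ alone (otherwise $\mathcal{C}_1$ would already cover $G$ and $\mathcal{C}_2$ could be discarded), so some coset $gN$ is not covered by $\mathcal{C}_1$. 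Its $|N|$ elements must then be covered by $\mathcal{C}_2$, with each complement contributing exactly one element, so $|\mathcal{C}_2| \ge |N|$ and $\sigma(G) = |\mathcal{C}_1| + |\mathcal{C}_2| \ge 1 + |N|$. The main obstacle is the structural step classifying maximal subgroups and the single-element-per-coset observation; once these are in place, the result reduces to the elementary counting argument above.
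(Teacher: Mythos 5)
Your argument is correct and complete; note that the paper does not prove this lemma but cites it from Detomi--Lucchini, and your proof is essentially the standard one underlying that reference: the dichotomy that a maximal subgroup of $G$ either contains the abelian minimal normal subgroup $N$ or complements it, the observation that a complement meets each coset of $N$ in exactly one element, and the resulting case split on whether a minimal cover by maximal subgroups uses any complements. All the delicate points (why $H\cap N=1$, why $\mathcal{C}_1\neq\emptyset$, why some full coset $gN$ is missed by $\mathcal{C}_1$ when $\mathcal{C}_2\neq\emptyset$) are handled correctly.
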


\section{A density result} 
\label{sect:density}

One of the main problems about group coverings is the following: what does the set of numbers of the form $\sigma(G)$, where $G$ is a finite group, look like? 

Recall that Conjecture \ref{conj:infinite} hypothesizes that there are infinitely many natural numbers that are not covering numbers.  A good strategy to approach this conjecture is the following: first, find a specific (and ``easy'' to handle) family $\mathscr{F}$ of groups such that $\{\sigma(G) : G \in \mathscr{F}\} = \mathscr{E}$ and then deal with the family $\mathscr{F}$. If Conjecture \ref{conj:DL} is true, then, because we clearly can choose as $\mathscr{F}$ the family of $\sigma$-elementary groups, we may choose as $\mathscr{F}$ the family of primitive monolithic groups. An important subfamily of it is the family of primitive monolithic groups whose quotient over the socle is cyclic, since the proper solvable quotients of $\sigma$-elementary groups are cyclic. In this section, we show that the density of the values $\sigma(G)$ for $G$ a primitive monolithic group with $G/\soc(G)$ cyclic is zero; specifically, setting $\mathcal{G}$ to be the family of such primitive monolithic groups, we show that $$|\{\sigma(G) : G \in \mathcal{G},\ \sigma(G) \le x\}| \le c x^{5/6}$$ for some constant $c$. This implies that $\mathbb{N}-\{\sigma(G) : G \in \mathcal{G}\}$ is infinite.

Before we can proceed with the proof of the main theorem in this section, we need some preparatory results.  The first can be considered part of the O'Nan-Scott Theorem (see \cite[Remark 1.1.40]{spagn}). Let $G$ be a primitive monolithic group with nonabelian socle $N=T^m$. Let $H$ be a maximal subgroup of $G$ such that $N \not \subseteq H$, i.e. $HN=G$ and $H$ supplements $N$. Suppose $H \cap N \neq \{1\}$, i.e., $H$ does not complement $N$. Since $N$ is a minimal normal subgroup of $G$ and $H$ is a maximal subgroup of $G$ not containing $N$, $H = N_G(H \cap N)$. In the following, let $X := N_G(T_1)/C_G(T_1)$, which is an almost simple group with socle $T_1C_G(T_1)/C_G(T_1) \cong T$. There are two possibilities for the intersection $H \cap N$, and the primitive group $G$ is described as having one of two types, depending on the possibility.  These two types and some basic properties of each are described as follows. 

\begin{enumerate}
\item \textbf{Product type}. In this case, the projections $H \cap N \to T_i$ are not surjective. This implies that there exists a subgroup $M$ of $T$, which is an intersection of $T$ with a maximal subgroup of $X$, such that $N_X(M)$ supplements $T$ in $X$, and there exist elements $a_2,\ldots,a_m \in T$ such that $H \cap N$ equals $$M \times M^{a_2} \times \ldots \times M^{a_m}.$$

\item \textbf{Diagonal type}. In this case, the projections $H \cap N \to T_i$ are surjective. This implies that there exists a minimal $H$-invariant partition $P$ of $\{1,\ldots,m\}$ into imprimitivity blocks of the action of $H$ on $\{1,\ldots,m\}$ such that $H \cap N$ equals $$\prod_{D \in P} (H \cap N)^{\pi_D},$$and, for each $D \in P$, the projection $(H \cap N)^{\pi_D}$ is a \textit{full diagonal} subgroup of $\prod_{i \in D}T_i$. (Following \cite[Definition 1.1.37]{spagn}, a subgroup $H$ of $\prod_{i \in D}T_i$ is said to be \textit{full diagonal} if each projection $\pi_i:H \to T_i$ is an isomorphism.)
\end{enumerate}

For a finite nonabelian simple group $T$, denote by $m(T)$ the minimal index of a proper subgroup of $G$, which is equal to the minimal degree of a transitive permutation representation of $T$.  Recall that $\ell_G(N)$ denotes the minimal index of a proper supplement of $N$ in $G$.  The following lemma provides information about $\ell_G(N)$ for primitive monolithic groups $G$ with socle $N$.

\begin{lem} \label{five}
Let $G$ be a primitive monolithic group with socle $N$. If $N$ is abelian, then $\ell_G(N)=|N|$. If $N$ is nonabelian, then write $N=T^r$ with $T$ a nonabelian simple group. Let $H$ be a maximal subgroup of $G$ supplementing $N$.
\begin{itemize}
\item[(i)] If $H$ complements $N$, then $|G:H| = |N| = |T|^r$.
\item[(ii)] If $H$ has product type, then $H = N_G(M \times M^{a_2} \times \ldots \times M^{a_r})$ for some subgroup $M$ of $T$ of the form $Y \cap T$, where $Y$ is a maximal subgroup of $X$ supplementing $T$, and $|G:H| = |T:M|^r$.
\item[(iii)] If $H$ has diagonal type, then $H = N_G(\Delta)$, where $\Delta$ is a product of $r/c$ diagonal subgroups (in the sense of the above description of diagonal type) with $c$ a prime divisor of $r$ larger than $1$, and $|G:H| = |T|^{r-r/c}$.
\end{itemize}
Moreover $\ell_G(N) \ge m(T)^r$.
\end{lem}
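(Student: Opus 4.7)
My plan is to prove the lemma by a direct case analysis, driven by the O'Nan--Scott-type trichotomy for maximal supplements of $N$ that is set up in the paragraph immediately preceding the statement. A first reduction: any proper supplement $H$ of $N$ is contained in a maximal subgroup $K$ which still supplements $N$ (since $KN \supseteq HN = G$), and $[G:K] \le [G:H]$, so $\ell_G(N)$ is realized by a maximal supplement and I may restrict to such $H$ throughout. The uniform computational tool I will use in every case is the identity
\[
[G:H] \;=\; \frac{|N|}{|H \cap N|},
\]
which follows from $HN = G$ via the second isomorphism theorem $H/(H \cap N) \cong G/N$.

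For the abelian case, I would observe that if $N$ is abelian and $H$ is a proper supplement, then $H \cap N$ is normal both in $H$ and (trivially) in $N$, hence normal in $HN = G$; minimality of $N$ together with properness of $H$ forces $H \cap N = \{1\}$, so $[G:H] = |N|$ and $\ell_G(N) = |N|$. For the nonabelian case I would read off $|H \cap N|$ in each of the three types listed before the lemma: in case (i), $|H \cap N| = 1$, giving $[G:H] = |T|^r$; in case (ii), $H \cap N = M \times M^{a_2} \times \cdots \times M^{a_r}$ has order $|M|^r$ (conjugation preserves order), giving $[G:H] = |T:M|^r$; in case (iii), $H \cap N$ is the direct product of $r/c$ full diagonal subgroups, each isomorphic to $T$, giving $|H \cap N| = |T|^{r/c}$ and $[G:H] = |T|^{r-r/c}$.

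The remaining assertion $\ell_G(N) \ge m(T)^r$ is then a matter of comparing the three index formulas to $m(T)^r$. Cases (i) and (ii) are immediate: $|T| \ge m(T)$ trivially, and $[T:M] \ge m(T)$ because $M = Y \cap T$ is a proper subgroup of $T$ (properness follows from $Y$ being a proper supplement of $T$ in $X$, which forces $T \not\subseteq Y$). Case (iii) reduces to the inequality $|T|^{(c-1)/c} \ge m(T)$ for every prime $c \mid r$ with $c \ge 2$; the sharpest instance, $c = 2$, demands $|T| \ge m(T)^2$. I expect this last step to be the main obstacle: while it sounds elementary, the cleanest route is to cite it as a known consequence of the classification of finite simple groups (verifiable family by family from the standard tables of minimal permutation degrees $m(T)$), rather than attempt a uniform abstract argument.
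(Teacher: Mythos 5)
Your proposal is correct and follows essentially the same route as the paper: both compute $[G:H]=|N:H\cap N|$ in each of the three cases, reduce the abelian case to the fact that supplements of an abelian non-Frattini minimal normal subgroup are complements, and reduce the final inequality to $m(T)^2\le |T|$, verified by inspection of the tables of minimal permutation degrees via CFSG. Your explicit reduction to maximal supplements and the remark that $M=Y\cap T$ is proper are small elaborations of steps the paper leaves implicit.
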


\begin{proof}
Suppose $N$ is abelian. Since $G$ is primitive, $N$ is non-Frattini, so it is complemented and each of its complements have index $\ell_G(N) = |N|$.

Suppose $N$ is nonabelian. The three listed facts in the statement follow easily from the fact that $|G:H| = |N:H \cap N|$. Now let us prove that $\ell_G(N) \ge m(T)^r$. Since $m(T)^r \le |T:M|^r$ for every proper subgroup $M$ of $T$, it suffices to show that $m(T)^r \le |T|^{r-r/c}$ for every divisor $c > 1$ of $r$. For this, it is enough to show that $m(T)^r \le |T|^{r/2}$, i.e., $m(T)^2 \le |T|$. This is true by inspection, using \cite{lucdam}.
\end{proof}

Next, we will prove a technical lemma which basically says under the right conditions that, if a set of numbers is ``small,'' then the set of all possible powers of those numbers is also small.

\begin{lem}
\label{powers}
Let $A$ be a subset of $\mathbb{N}$, and for $x \in \mathbb{R}$ let $$\theta(x) := |\{n \in A\ :\ n \le x\}|.$$ If there exists a constant $c$ such that $\log(x) \theta(x^{\frac{1}{2}}) \le c \theta(x)$ for every $x > 0$, then there exists a constant $d$ such that $$|\{n^k\ :\ n \in A,\ k \in \mathbb{N},\ n^k \le x\}| \le d \theta(x)$$for every $x > 0$.
\end{lem}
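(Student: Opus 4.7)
The plan is to bound the set of powers by partitioning according to the exponent $k$. First, I would write
\[
 \bigl|\{n^k : n \in A,\ k \in \mathbb{N},\ n^k \le x\}\bigr| \;\le\; \sum_{k \ge 1} \bigl|\{n \in A : n^k \le x\}\bigr| \;=\; \sum_{k \ge 1} \theta(x^{1/k}),
\]
and observe that the sum is effectively finite: any $n \in A$ with $n \ge 2$ satisfying $n^k \le x$ must have $k \le \log_2 x$, while the element $n=1$ (if it lies in $A$) only ever contributes the single value $1$. Thus for $x \ge 2$ the sum may be truncated at $k = \lfloor \log_2 x \rfloor$ without loss.

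Next, I would isolate the $k=1$ term, which contributes exactly $\theta(x)$, and bound the tail using the trivial monotonicity $\theta(x^{1/k}) \le \theta(x^{1/2})$, valid for all $k \ge 2$ and $x \ge 1$. This yields
\[
 \bigl|\{n^k : n \in A,\ k \in \mathbb{N},\ n^k \le x\}\bigr| \;\le\; \theta(x) + \lfloor \log_2 x \rfloor \cdot \theta(x^{1/2}).
\]
The hypothesis $\log(x)\,\theta(x^{1/2}) \le c\,\theta(x)$ (with natural logarithm) translates, after a change of logarithm base, into $\log_2(x)\,\theta(x^{1/2}) \le (c/\log 2)\,\theta(x)$, and substituting this estimate gives the desired conclusion with the explicit constant $d := 1 + c/\log 2$.

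The argument is essentially a direct application of the hypothesis, so I do not expect any significant technical obstacle; the real content of the lemma sits inside the hypothesis, whose strength is precisely calibrated to absorb the logarithmic factor coming from summing over the $O(\log x)$ relevant values of $k$. The only care needed is with the boundary: for $0 < x < 2$ the tail sum is empty and the bound is trivial, while for $0 < x < 1$ the hypothesis itself is vacuous because $\log x \le 0$, forcing $\theta(x^{1/2}) = 0$ whenever $\theta(x) = 0$ on $(0,1)$ is not already implied. In all such cases the resulting bound $d\,\theta(x)$ continues to hold without any extra work.
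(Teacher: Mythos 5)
Your proposal is correct and follows essentially the same route as the paper: decompose by exponent, note that only $O(\log x)$ exponents contribute, bound each tail term by $\theta(x^{1/2})$ via monotonicity, and absorb the logarithmic factor using the hypothesis, yielding a constant of the form $1 + c/\log 2$ (the paper writes $d = 1 + bc$ with $N(x) \le b\log x$). Your extra care with the $n=1$ element and the range $0 < x < 2$ is a harmless refinement of the same argument.
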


\begin{proof}
Let $N(x)$ be the smallest natural number such that $2^{N(x)} > x$. Clearly there exists a constant $b$ such that $N(x) \le b \log(x)$, and 
\begin{eqnarray}
|\{n^k\ :\ n \in A,\ k \in \mathbb{N},\ n^k \le x\}| & \le & \theta(x) + \theta(x^{1/2}) + \ldots + \theta(x^{1/N(x)}) \nonumber \\ & \le & \theta(x) + N(x) \theta(x^{1/2}) \le d \theta(x), \nonumber
\end{eqnarray}
where $d = 1+bc$.
\end{proof}

The following result is due to Frobenius; see \cite[Section 5]{embthm} for a modern treatment. Here, if $H$ and $K$ are two groups and $K \le S_n$, then $H \Wr K$ denotes the wreath product between $H$ and $K$, i.e., the semidirect product $H^n \rtimes K$, where $K$ acts on $H^n$ by permuting the coordinates.

\begin{thm}\label{embedding}
Let $H$ be a subgroup of the finite group $G$, let $x_1,\ldots,x_n$ be a right transversal for $H$ in $G$, and let $\xi$ be any homomorphism with domain $H$. Then the map $G \to \xi(H) \Wr S_n$ given by $$x \mapsto (\xi(x_1 x x_{1^{\pi}}^{-1}), \ldots, \xi(x_n x x_{n^{\pi}}^{-1})) \pi,$$where $\pi \in S_n$ satisfies $x_i x x_{i^{\pi}}^{-1} \in H$ for all $i = 1,\ldots,n$, is a well-defined homomorphism with kernel equal to the normal core $(\ker \xi)_G$.
\end{thm}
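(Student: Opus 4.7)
The plan is to prove the three required properties (well-definedness, homomorphism, and identification of the kernel) by leveraging the natural action of $G$ on the right cosets of $H$ to extract the permutation $\pi$, and then exploiting the cocycle identity that arises from combining two transversal decompositions.

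First, for well-definedness, I would define $\pi = \pi(x) \in S_n$ to be the permutation induced by the (right multiplication) action of $G$ on the set of right cosets $\{Hx_1, \ldots, Hx_n\}$, where $i^{\pi(x)} = j$ if and only if $Hx_i x = Hx_j$. This $\pi(x)$ is uniquely determined, and by construction $x_i x x_{i^{\pi(x)}}^{-1} \in H$ for every $i$, so the expression $\xi(x_i x x_{i^{\pi(x)}}^{-1})$ makes sense. Thus the formula defines a function $\phi : G \to \xi(H) \Wr S_n$.

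Second, to show $\phi$ is a homomorphism, let $x, y \in G$. The map $x \mapsto \pi(x)$ is the coset action, hence a homomorphism to $S_n$, so $\pi(xy) = \pi(x)\pi(y)$. For the coordinate at position $i$, set $j := i^{\pi(x)}$; then $i^{\pi(xy)} = j^{\pi(y)}$, and the identity
\[ x_i (xy) x_{i^{\pi(xy)}}^{-1} \;=\; \bigl(x_i x x_j^{-1}\bigr)\bigl(x_j y x_{j^{\pi(y)}}^{-1}\bigr) \]
holds trivially in $G$, with both factors lying in $H$. Applying $\xi$ gives
\[ \xi\bigl(x_i(xy)x_{i^{\pi(xy)}}^{-1}\bigr) \;=\; \xi\bigl(x_i x x_{i^{\pi(x)}}^{-1}\bigr)\,\xi\bigl(x_{i^{\pi(x)}} y x_{(i^{\pi(x)})^{\pi(y)}}^{-1}\bigr), \]
which is precisely the rule for multiplication in the wreath product $\xi(H)\Wr S_n$. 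Hence $\phi(xy) = \phi(x)\phi(y)$.

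Third, to identify the kernel, observe that $\phi(x) = 1$ if and only if $\pi(x)$ is the identity (equivalently, $x_i x x_i^{-1} \in H$ for every $i$) and $\xi(x_i x x_i^{-1}) = 1$ for every $i$. These conditions together say exactly that $x_i x x_i^{-1} \in \ker \xi$ for all $i$, i.e., $x \in \bigcap_{i=1}^n x_i^{-1}(\ker \xi)x_i$. Since $\ker \xi$ is normal in $H$ and $\{x_1,\ldots,x_n\}$ is a right transversal for $H$ in $G$, any $g \in G$ can be written as $g = h x_i$ with $h \in H$, and then $g^{-1}(\ker\xi)g = x_i^{-1}h^{-1}(\ker\xi)h x_i = x_i^{-1}(\ker\xi)x_i$. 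Therefore $\bigcap_{g\in G} g^{-1}(\ker\xi)g = \bigcap_i x_i^{-1}(\ker\xi)x_i$, which is the normal core $(\ker\xi)_G$.

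The main obstacle is purely notational: keeping the wreath product convention straight (who acts on whom, and whether the permutation part multiplies on the right or on the left of the tuple) so that the cocycle identity in step two matches the chosen multiplication in $\xi(H)\Wr S_n$. Once the conventions are fixed as above, every step reduces to a routine manipulation of cosets and transversals; the key computational content is the telescoping insertion $x_j^{-1} x_j$ between $x$ and $y$, and the non-content issue is correctly tracking the index $i^{\pi(x)}$ in both the tuple and the permutation action.
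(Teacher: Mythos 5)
Your proof is correct and complete: the paper itself gives no proof of this statement, citing it as a classical result of Frobenius (with a reference to Cossey--Kegel--Kov\'acs for a modern treatment), and your argument is the standard direct verification one would find there. The three steps --- extracting $\pi(x)$ from the right coset action, the telescoping identity $x_i(xy)x_{i^{\pi(xy)}}^{-1} = (x_i x x_j^{-1})(x_j y x_{j^{\pi(y)}}^{-1})$ with $j = i^{\pi(x)}$ matching the wreath product multiplication, and the identification of the kernel with $\bigcap_i x_i^{-1}(\ker\xi)x_i = (\ker\xi)_G$ via normality of $\ker\xi$ in $H$ --- are all sound, and you correctly flag the only real pitfall, namely fixing the convention for how $S_n$ acts on the coordinates so that the cocycle identity matches the group law in $\xi(H)\Wr S_n$.
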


Next, in the following remark we establish some notation and basic results about monolithic groups with a nonabelian socle.

\begin{rem} \label{mono}
Let $G$ be a monolithic group with socle $N = \soc(G) = T_1 \times \cdots \times T_m$, where $T_1, \ldots ,T_m$ are pairwise isomorphic nonabelian simple groups. We also define $X := N_G(T_1)/C_G(T_1)$, which is an almost-simple group with socle $T := T_1 C_G(T_1)/C_G(T_1) \cong T_1$. The minimal normal subgroups of $T^m = T_1 \times \ldots \times T_m$ are precisely its factors $T_1,\ldots,T_m$. Since automorphisms send minimal normal subgroups to minimal normal subgroups, it follows that $G$ acts on the $m$ factors of $N$. Let $\rho: G \to S_m$ be the homomorphism induced by the conjugation action of $G$ on the set $\{T_1, \ldots,T_m\}$. The group $K := \rho(G)$ is a transitive permutation group of degree $m$.  Choosing $H := N_G(T_1)$ and $\xi: H \to \mbox{Aut}(T_1)$, the homomorphism given by the conjugation action of $H$ on $T_1$, by Theorem \ref{embedding}, we see that $G$ embeds in the wreath product $X \Wr K$.
\end{rem}

We need some consequences of the classification of finite simple groups (henceforth CFSG); see \cite{GLS}. For $T$ a finite nonabelian simple group, recall that $m(T)$ denotes the minimal index of a proper subgroup of $T$. Clearly $m(A_n)=n$, and the value of $m(T)$ when $T$ is a group of Lie type can be found in \cite[Table 1]{lucdam}.

\begin{lem} \label{simple}
Let $T$ be a nonabelian finite simple group.
\begin{itemize}
\item[(1)] There exists a constant $c$ such that $|\Out(T)| \le c \log(m(T))$.
\item[(2)] If $T$ is non-alternating and not of the form $\PSL(2,q)$, then there are at most $cx^{\frac{1}{2}}/\log(x)$ groups $T$ such that $m(T) \le x$, where $c$ is a constant.
\end{itemize}
\end{lem}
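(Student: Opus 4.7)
Both parts of the lemma reduce via CFSG to a case analysis over the families of finite nonabelian simple groups, together with the explicit tabulation of $m(T)$ in \cite{lucdam}. For part (1), I would verify the inequality $|\Out(T)| \le c\log(m(T))$ separately in the three broad cases. If $T$ is sporadic the claim is immediate, since there are only finitely many $T$ and each has $|\Out(T)| \le 2$. If $T = A_n$ with $n \ge 5$, then $m(T) = n$ and $|\Out(T)| \le 4$, so any $c$ with $c\log 5 \ge 4$ works. If $T$ is of Lie type over $\mathbb{F}_q$ of (untwisted) rank $r$, the standard decomposition of $\Out(T)$ into diagonal, field, and graph automorphism contributions (see \cite{GLS}) yields $|\Out(T)| \le c_1 r \log q$, while the minimal index tables in \cite{lucdam} give a bound of the form $m(T) \ge q^{c_2 r}$; combining these gives the inequality.

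For part (2), after excluding $A_n$ and $\PSL(2,q)$, I would partition the remaining simple groups into the finitely many sporadic groups plus the Lie-type families other than $\PSL(2,-)$. A glance at \cite{lucdam} shows that the slowest-growing values of $m(T)$ as a function of the field parameter $q$ occur for $\PSL(3,q)$, where $m(T) = q^2+q+1$, and for the Suzuki groups $\Sz(q) = {}^2B_2(q)$, where $m(T) = q^2+1$; every other infinite Lie-type family has $m(T)$ of polynomial degree at least $3$ in $q$ (with the exceptional families being of still higher degree). The dominant contribution to $|\{T : m(T) \le x\}|$ therefore comes from $\PSL(3,q)$, and this is essentially the number of prime powers $q$ with $q \le x^{1/2}$. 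By the Prime Number Theorem the number of such prime powers is $\sim x^{1/2}/\log(x^{1/2}) = O(x^{1/2}/\log x)$, which is exactly the claimed bound.

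It then remains to check that no other family contributes a term of matching order. The Suzuki contribution is only $O(\log x)$ since $q$ must be an odd power of $2$, and the Ree-type families are even more restricted. For any family in which $m(T) \ge q^d$ with $d \ge 3$, the count is at most the number of prime powers $\le x^{1/d}$, i.e.\ $O(x^{1/3}/\log x)$; summing over the finitely many families at each fixed rank, and then over ranks $r$ (which are constrained by $q^r \le x$, so $r \le \log_2 x$), one finds $\sum_{r \ge 3} x^{1/r} = O(x^{1/3})$, which is absorbed into the main $O(x^{1/2}/\log x)$ term. The main obstacle is less conceptual than organizational: one must confirm uniformly across the tables in \cite{lucdam} that no family outside $\{A_n,\PSL(2,q)\}$ has $m(T)$ growing only linearly in a single parameter, and one must absorb the finite list of exceptional isomorphisms between small simple groups of different families into the constant $c$.
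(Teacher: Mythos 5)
Your argument is correct and follows essentially the same route as the paper: part (1) is a CFSG case check (which the paper dispatches ``by inspection''), and part (2) rests on the same key facts, namely that $m(T) \ge bq^2$ for every non-$\PSL(2,q)$ group of Lie type, that the number of prime powers $q \le x^{1/2}$ is $O(x^{1/2}/\log x)$ by the Prime Number Theorem together with Lemma \ref{powers}, and that the families with $m(T) \ge bq^3$ contribute only $O(x^{1/3}\log x)$ after summing over the $O(\log x)$ choices of rank. The only difference is that you name the extremal families ($\PSL(3,q)$ and $\Sz(q)$) explicitly, which the paper leaves implicit.
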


\begin{proof}
Item (1) follows from CFSG by inspection. For (2), by CFSG, if $q$ is the size of the base field and $T$ is not $\PSL(2,q)$, then there always is a constant $b$ such that $bq^2 \le m(T)$ so that $m(T) \le x$ implies $q \le (x/b)^\frac{1}{2}$, and by the Prime Number Theorem and Lemma \ref{powers}, there are at most $c x^\frac{1}{2}/\log(x)$ choices for $q$, where $c$ is a constant.  For a given $q$, we need only consider the constant number of families where $m(T) \sim q^2$. Indeed, if $bq^3 \le m(T)$, then we may replace the square root by a cube root, and there are on the order of $\log(x)$ possible values of $n$, and so there are at most $\left(d x^\frac{1}{3}/\log(x)\right) \cdot \log(x)$ total possibilities outside the families when $m(T) \sim q^2$, where $d$ is a constant.  The result follows.
\end{proof}

Let $\mathcal{G}$ be a family of monolithic $\sigma$-elementary groups with nonabelian socle, and for $G \in \mathcal{G}$, let $\soc(G) = T^k$ for $T$ a nonabelian simple group and let $n_{\sigma}(G) := m(T)^k$.  The following lemma provides bounds for the number of integers that are covering numbers of groups in $\mathcal{G}$ in terms of the number of integers that are of the form $n_\sigma(G)$ for $G \in \mathcal{G}$.

\begin{lem} \label{g(x)}
Let $\mathcal{H}$ be a subfamily of $\mathcal{G}$. Define $$A := \{\sigma(G) : G \in \mathcal{H}\}, \;\;  B := \{n_{\sigma}(G) : G \in \mathcal{H}\}.$$Let $g(x)$ be a function such that, for all $n \le x$, $$|\{G \in \mathcal{H} : n_{\sigma}(G)=n\}| \le g(x).$$ Then $$|\{n \in A : n \le x\}| \le g(x) \cdot |\{n \in B : n \le x\}|.$$
\end{lem}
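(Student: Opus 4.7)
The plan is to reduce the statement to the single inequality $\sigma(G) \ge n_\sigma(G)$ for every $G \in \mathcal{H}$, after which the bound will follow from a routine fiber count. The inequality should come directly from Lemmas \ref{sigmaprim}, \ref{indexbound}, and \ref{five}: every $\sigma$-elementary monolithic group with nonabelian socle is primitive, because by Lemma \ref{sigmaprim}(1) its Frattini subgroup is trivial and any monolithic group with trivial Frattini subgroup is primitive, as observed in Section \ref{sect:prelim}. Writing $\soc(G) = T^k$ with $T$ a nonabelian simple group, Lemma \ref{indexbound} gives $\sigma(G) \ge \ell_G(\soc(G))$, and Lemma \ref{five} then yields $\ell_G(\soc(G)) \ge m(T)^k = n_\sigma(G)$.

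Once this inequality is in place, I will proceed as follows. Fix $x > 0$ and set
\[\mathcal{H}_x := \{G \in \mathcal{H} : \sigma(G) \le x\}, \qquad \mathcal{H}_{x,n} := \{G \in \mathcal{H}_x : n_\sigma(G) = n\}.\]
For every $G \in \mathcal{H}_x$ we have $n_\sigma(G) \le \sigma(G) \le x$, so $n_\sigma$ takes values in $B \cap [1,x]$ and partitions $\mathcal{H}_x$ into the fibers $\mathcal{H}_{x,n}$ as $n$ ranges over $n_\sigma(\mathcal{H}_x) \subseteq B \cap [1,x]$. Since each $\mathcal{H}_{x,n}$ is contained in $\{G \in \mathcal{H} : n_\sigma(G) = n\}$, the hypothesis gives $|\mathcal{H}_{x,n}| \le g(x)$. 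Applying the map $\sigma$ to each fiber and using the obvious bound $|\sigma(\mathcal{H}_{x,n})| \le |\mathcal{H}_{x,n}|$, I obtain
\[|A \cap [1,x]| \;=\; |\sigma(\mathcal{H}_x)| \;\le\; \sum_{n \in n_\sigma(\mathcal{H}_x)} |\sigma(\mathcal{H}_{x,n})| \;\le\; g(x) \cdot |n_\sigma(\mathcal{H}_x)| \;\le\; g(x) \cdot |B \cap [1,x]|,\]
which is the claimed inequality.

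The only nontrivial step is the inequality $\sigma(G) \ge n_\sigma(G)$; the rest is pure bookkeeping. I do not anticipate any serious obstacle, since all of the ingredients (primitivity from $\sigma$-elementarity plus trivial Frattini subgroup, the lower bound $\ell_G(\soc(G)) \ge m(T)^k$ from Lemma \ref{five}, and the subadditivity of image size over fibers) are already established earlier in the paper.
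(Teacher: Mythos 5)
Your proof is correct and follows essentially the same fiber-counting argument as the paper: bound $|A\cap[1,x]|$ by the number of groups with $\sigma(G)\le x$, partition those by the value of $n_\sigma$, and bound each fiber by $g(x)$. The only difference is that you explicitly justify the inequality $n_\sigma(G)\le\ell_G(\soc(G))\le\sigma(G)$ inside the lemma (via Lemmas \ref{sigmaprim}, \ref{indexbound}, and \ref{five}), whereas the paper leaves it implicit here and records it separately in the proof of Theorem \ref{densityth}; making it explicit is a reasonable improvement, not a deviation.
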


\begin{proof}
Indeed,
\begin{align*}
|\{n \in A : n \le x\}| & \le |\{G \in \mathcal{H} : \sigma(G) \le x\}| \le |\{G \in \mathcal{H} : n_{\sigma}(G) \le x\}| \\ & = \sum_{n \le x} |\{G \in \mathcal{H} : n_{\sigma}(G)=n\}| \le g(x) \cdot |\{n \in B : n \le x\}|.
\end{align*}
\end{proof}

Next, we restate a lemma from \cite{Tomkinson}, since there is a misprint in the original version.

\begin{lem}
 \label{lem:tom3.2}
 Let $N$ be a proper subgroup of the finite group $G$.  Let $U_1, \dots, U_k$ be proper subgroups of $G$ containing $N$ and $V_1, \dots, V_k$ be subgroups such that that $V_iN = G$ with $|G:V_i| = \beta_i$ and $\beta_1 \le \dots \le \beta_k$.  If 
 \[G = U_1 \cup \dots \cup U_h \cup V_1 \cup \dots \cup V_k,\]
 where $U_1 \cup \dots \cup U_h \neq G$, then $\beta_1 \le k$.  Furthermore, if $\beta_1 = k$, then $\beta_1 = \dots = \beta_k = k$ and $V_i \cap V_j \le U_1 \cup \dots \cup U_h$ for all $i \neq j$.
\end{lem}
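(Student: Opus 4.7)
The plan is to exploit the fact that each $U_i$ contains $N$, so the union $U_1 \cup \cdots \cup U_h$ is a union of entire cosets of $N$ in $G$. Since this union is proper in $G$, there exists at least one coset $gN$ entirely disjoint from $U_1 \cup \cdots \cup U_h$, and such a coset must therefore be covered by $V_1 \cup \cdots \cup V_k$. Because $V_i N = G$, for each $i$ the intersection $V_i \cap N$ has size $|V_i|\cdot|N|/|G| = |N|/\beta_i$, and every nonempty intersection $V_i \cap gN$ is a coset of $V_i \cap N$ in $V_i$, hence has cardinality $|N|/\beta_i$.

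From here I would obtain the main inequality by direct counting. Since $gN \subseteq V_1 \cup \cdots \cup V_k$,
\[|N| = |gN| \le \sum_{i=1}^k |V_i \cap gN| \le \sum_{i=1}^k \frac{|N|}{\beta_i} \le \frac{k|N|}{\beta_1},\]
and dividing by $|N|$ yields $\beta_1 \le k$.

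Next I would extract the equality conditions. If $\beta_1 = k$, every inequality in the chain must be tight. The equality $\sum_i 1/\beta_i = 1$, together with $\beta_i \ge \beta_1 = k$, forces $\beta_i = k$ for every $i$, and the equality $|gN| = \sum_i |V_i \cap gN|$ forces the nonempty intersections $V_i \cap gN$ to form a partition of $gN$. The crucial point is that this partition property holds for \emph{every} coset of $N$ disjoint from $U_1 \cup \cdots \cup U_h$, not just the one chosen at the start. Therefore, for any $i \ne j$, no element of $V_i \cap V_j$ can lie in such a coset, so every element of $V_i \cap V_j$ lies in a coset of $N$ contained in $U_1 \cup \cdots \cup U_h$, yielding $V_i \cap V_j \le U_1 \cup \cdots \cup U_h$.

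The main obstacle is the careful bookkeeping required to pull out of a single chain of inequalities both that all indices satisfy $\beta_i = k$ and that the intersections $V_i \cap gN$ partition $gN$. I do not expect any conceptual difficulty beyond this, because the argument relies only on two elementary structural facts: each $U_i$ is a union of $N$-cosets (since $N \le U_i$), and each $V_i$ meets each $N$-coset either trivially or in a translate of $V_i \cap N$ of predictable size (since $V_i N = G$).
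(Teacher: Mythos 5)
Your proof is correct. The paper itself does not prove this lemma --- it merely restates it from Tomkinson's paper (correcting a misprint) --- and your coset-counting argument is precisely the standard one from that source: each $U_i$ is a union of $N$-cosets, an uncovered coset $gN$ meets each $V_i$ in exactly $|N|/\beta_i$ elements since $V_iN=G$, and forcing equality in the resulting chain yields both $\beta_i=k$ for all $i$ and the disjointness of the sets $V_i\cap gN$ on every coset outside $U_1\cup\dots\cup U_h$.
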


We are now ready to prove Theorem \ref{densityth} and Corollary \ref{cor:density}.

\begin{proof}[Proof of Theorem \ref{densityth}]
Let us use the notation established in Remark \ref{mono}. Let $g$ be an element of $G$ which generates $G$ modulo $\soc(G)$. We know that $G$ embeds in the wreath product $X \Wr K$, so $g$ has the form $(x_1,\ldots,x_k)\tau$, where $x_1,\ldots,x_k \in X$ and $\tau \in K$ is a $k$-cycle in $S_k$ that generates $K$. Moreover, without loss of generality, we may assume that $\tau$ is the $k$-cycle $(1 \; 2 \; \dots \; k)$.  Observe that conjugating $\tau$ by $(1,1,\ldots,1,x)$ for any $x \in X$ gives $(1,1,\ldots,1,x,x^{-1}) \tau$, and conjugating elements of $T^k$ by $\tau$ has the effect of ``cycling'' the coordinates. This implies that up to replacing $G$ by a conjugate of $G$ in $X \Wr K$ we may assume that $g = (1,\ldots,1,x)\tau$, where $x$ is a generator of $X$ modulo $T$; such a generator $x$ must exist since $G/\soc(G)$ is cyclic. Since $X/T \le \Out(T)$, this implies that in $\mathcal{G}$ there are at most $|\Out(T)|$ isomorphism classes of groups $G$ with given socle $T^k$. 

We claim that for fixed $j \le x$ the number of simple groups $T$ with $m(T)=j$ is at most $cx^\frac{1}{3}$, where $c$ is a constant. This is transparent in the case of sporadic and alternating groups. Groups of Lie type are parametrized by two numbers, $q$ and $n$, where $q$ is the size of the base field and $n$ is the dimension of the vector space. A simple inspection using \cite[Table 1]{lucdam} shows that, if $m(S) = j \le x$, then $q^{n-1} \le x$, and for any $n \le 3$ we have exactly one choice for $q$. This in turn gives a bounded number of choices for $T$ since the table is finite. For $n \ge 4$ we have $q \le x^\frac{1}{3}$ and at most $d \log(x)$ choices for $n$, so, using Lemma \ref{powers} and the Prime Number Theorem, we find at most $d \log(x) x^\frac{1}{3}/\log(x^\frac{1}{3})$ choices for $T$, where $d$ is a constant, giving an upper bound of $3dx^\frac{1}{3}$. This holds for every table entry, and the claim now follows from the fact that the table has a constant number of entries.

By Lemma \ref{simple} (1) there exists a positive constant $d$ such that, when setting $g(x) = d \log(x) x^\frac{1}{3}$, we have $$|\{G \in \mathcal{G} : n_{\sigma}(G) = n\}| \le g(n) \le g(x)$$ for every $n \le x$, observing that there are $o(1) \log(x)$ choices for $m(S)$, and hence $k$ is uniquely determined. We are going to use this function $g(x)$ below when we apply Lemma \ref{g(x)}.

Let $\mathcal{A}$ be the family of the alternating groups $A_n$ with $n \ge 5$, and let $\mathcal{P}$ be the set of simple groups isomorphic to $\PSL(2,q)$ with $q$ a prime power.  As in Lemma \ref{simple} (2), let $\mathcal{S}$ be the family of the nonabelian simple groups not in $\mathcal{P} \cup \mathcal{A}$. Observe that $\mathcal{G}$ is a disjoint union $\bigcup_{i=1}^6 \mathcal{G}_i$ where $$\mathcal{G}_1 := \{G \in \mathcal{G} : k \ge 1,\ T \in \mathcal{S}\},\ \mathcal{G}_2 := \{G \in \mathcal{G} : k = 1,\ T \in \mathcal{A}\},$$
$$\mathcal{G}_3 := \{G \in \mathcal{G} : k = 2,\ T \in \mathcal{A}\},\ \mathcal{G}_4 := \{G \in \mathcal{G} : k \ge 3,\ T \in \mathcal{A}\},$$
$$\mathcal{G}_5 := \{G \in \mathcal{G} : k = 1,\ T \in \mathcal{P}\},\ \mathcal{G}_6 := \{G \in \mathcal{G} : k \ge 2,\ T \in \mathcal{P}\}.$$

By Lemmas \ref{five} and \ref{lem:tom3.2}, $n_{\sigma}(G) \le \ell_G(\soc(G)) \le \sigma(G)$. 

Using Lemma \ref{powers} and Lemma \ref{simple} (2), we see that $$|\{ n_{\sigma}(G)\ :\ G \in \mathcal{G}_1,\ \sigma(G) \le x\}| \le c_1 x^\frac{1}{2}/\log(x).$$Using that $n^3 \le \sigma(A_n), \sigma(S_n)$ for $n$ large (see \cite[Theorem 3.1]{Maroti} and \cite[Theorem 9.2]{lucmar}) and that $\Aut(A_n)= S_n$ for $n$ large, we see that $$|\{ G \in \mathcal{G}_2 : \sigma(G) \le x\}| \le c_2 x^\frac{1}{3}.$$Since $m(A_n)=n$ and $\Aut(A_n \times A_n) = \Aut(A_n) \Wr C_2$, we clearly have $$|\{G \in \mathcal{G}_3 : \sigma(G) \le x\}| \le c_3 x^\frac{1}{2},$$ $$|\{ n_{\sigma}(G) : G \in \mathcal{G}_4,\ \sigma(G) \le x\}| \le c_4 x^\frac{1}{3}.$$If $G \in \mathcal{G}_5$ and $q$ is the size of the base field, then $q \le x$. If $q$ is not a prime, then by the Prime Number Theorem there are at most $o(1) x^\frac{1}{2}$ such $q$, since, if $q=p^f$ with $p$ prime and $f>1$, then $p^2 \le p^f = q$ gives at most $o(1)x^\frac{1}{2}/\log(x)$ choices for $p$ and at most $o(1)\log(x)$ choices for $f$. Using that for $p$ a large prime we have $p^2/2 \le \sigma(\PSL(2,p)) = \sigma(\PGL(2,p))$ (see \cite{BFS}) and that $\Aut(\PSL(2,p)) = \PGL(2,p)$, together with the Prime Number Theorem, we see that $$|\{G \in \mathcal{G}_5 : \sigma(G) \le x\}| \le c_5 x^\frac{1}{2}.$$ Again using the Prime Number Theorem and Lemma \ref{powers}, we have that $$|\{ n_{\sigma}(G) : G \in \mathcal{G}_6,\ \sigma(G) \le x\}| \le c_6 x^\frac{1}{2}/\log(x).$$Combining the above with Lemma \ref{g(x)}, where $g(x)=d \log(x) x^\frac{1}{3}$, we obtain that 
\begin{align*}
\{G \in \mathcal{G} &: \sigma(G) \le x\}|\\ 
                &\le c_1 dx^\frac{1}{2}x^\frac{1}{3}  +c_2 x^\frac{1}{3} +c_3x^\frac{1}{2}+c_4 dx^\frac{1}{3}x^\frac{1}{3} \log(x)+c_5x^\frac{1}{2}+c_6d x^\frac{1}{2}x^\frac{1}{3},
\end{align*}                
which is at most $c x^{5/6}$ with $c$ a constant, completing the proof.
\end{proof}

\begin{proof}[Proof of Corollary \ref{cor:density}]
Since the covering numbers of solvable groups are of the form $q+1$ with $q$ a prime power (by Tomkinson's result \cite{Tomkinson}), we know that there are infinitely many natural numbers that are not the covering number of a solvable group. Let now $G \in \mathscr{F}$ be nonsolvable. Up to replacing $G$ with a suitable $\sigma$-elementary quotient $G_0$ of $G$ such that $\sigma(G)=\sigma(G_0)$, we may assume that $G$ is $\sigma$-elementary. The group $G$ must have a unique minimal normal subgroup $N$, where $N$ is nonabelian; otherwise, if $N$ and $L$ are two minimal normal subgroups of $G$, then $N$ is isomorphic to a subgroup of $G/L$, which is solvable, and $L$ is isomorphic to a subgroup of $G/N$, which is solvable as well, contradicting the fact that $G$ is nonsolvable. Since $G$ is $\sigma$-elementary and $G/N$ is solvable, $G/N$ is cyclic. Moreover, $\Phi(G)=\{1\}$ by Lemma \ref{sigmaprim}. This implies that $G$ is a primitive monolithic group with $G/\soc(G)$ cyclic, and now the result follows by Theorem \ref{densityth}.
\end{proof}

\section{Nonabelian $\sigma$-elementary groups whose covering number is at most 129}
\label{sect:129}

In this section, we prove that any nonabelian $\sigma$-elementary group with covering number at most $129$ is both primitive and monolithic.  When combined with the calculations of Section \ref{sect:tables} and Appendix \ref{sect:calcspecific}, this allows us to determine precisely which integers less than or equal to $129$ are covering numbers of finite groups.  The main theorem in this section is an easy consequence of the following lemmas.

\begin{lem} \label{cur}
Let $G$ be a primitive monolithic group with nonabelian socle $N$. Then there exists a set $\{g_1N,\ldots,g_kN\}$ generating $G/N$ with the property that $$\sigma(\langle g_i, N \rangle) \le \sigma^{\ast}(G) + \omega(|g_iN|_{G/N})$$for every $i \in \{1,\ldots,k\}$, where $|g_iN|_{G/N}$ denotes the order of $g_iN$ in $G/N$ and $\omega(m)$ denotes the number of distinct prime divisors of $m$.
\end{lem}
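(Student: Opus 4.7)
The plan is to extract the cosets $g_iN$ directly from an optimal family realizing $\sigma^{\ast}(G)$, and then to build an explicit cover of each $H_i := \langle g_i, N \rangle$ by restricting the supplements of $N$ to $H_i$ and adding one subgroup per prime divisor of $m_i := |g_iN|_{G/N}$.

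Concretely, I would start by choosing $\Omega = \bigcup_{i=1}^k g_iN$ realizing $\sigma^{\ast}(G) = s$: that is, $\langle \Omega \rangle = G$ and there are proper supplements $K_1, \dots, K_s$ of $N$ in $G$ with $\Omega \subseteq \bigcup_{j=1}^s K_j$. Then $\{g_1N, \dots, g_kN\}$ generates $G/N$ by construction. Moreover, no proper supplement of $N$ can contain $N$ (otherwise $K_j = K_jN = G$), so no $g_i$ lies in $N$, which forces $m_i \ge 2$ for all $i$. For the cover of $H_i$, I would use the family
\[
\{K_j \cap H_i : 1 \le j \le s\} \;\cup\; \{\langle g_i^p, N \rangle : p \text{ prime},\ p \mid m_i\},
\]
which has exactly $s + \omega(m_i)$ members.

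I would then verify properness and coverage. For properness: each $K_j \cap H_i \subsetneq H_i$ because $N \subseteq H_i$ while $N \not\subseteq K_j$; and each $\langle g_i^p, N \rangle \subsetneq H_i$ because its image in $H_i/N$ equals $\langle g_i^p N \rangle$, a cyclic subgroup of order $m_i/p < m_i$. For coverage, I would partition $H_i = \bigsqcup_{a=0}^{m_i-1} g_i^a N$ and split on $\gcd(a, m_i)$. If $\gcd(a, m_i) > 1$ (including $a=0$), some prime $p \mid m_i$ divides $a$, hence $g_i^a \in \langle g_i^p \rangle$ and $g_i^a N \subseteq \langle g_i^p, N \rangle$. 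If $\gcd(a, m_i) = 1$, I would use the Chinese Remainder Theorem to find $\alpha \equiv a \pmod{m_i}$ with $\gcd(\alpha, |g_i|) = 1$: for each prime $p \mid |g_i|$ with $p \nmid m_i$, the condition $p \nmid a + cm_i$ forbids a single residue class of $c$ modulo $p$, and these conditions across distinct primes are independent. Then $g_i^\alpha N = g_i^a N$, and Lemma~\ref{spancoprime} applied to the $K_j$'s covering $g_iN$ gives $g_i^\alpha N \subseteq \bigcup_j K_j$, whence $g_i^a N \subseteq \bigcup_j (K_j \cap H_i)$.

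Combining the two cases shows that the stated family covers $H_i$, giving $\sigma(H_i) \le s + \omega(m_i) = \sigma^{\ast}(G) + \omega(|g_iN|_{G/N})$, as required. The only place where any care is needed is the properness of $K_j \cap H_i$, and this reduces to the single observation that a proper supplement of $N$ cannot contain $N$; the rest is routine bookkeeping with the cyclic quotient $H_i/N$ together with one application of Lemma~\ref{spancoprime}.
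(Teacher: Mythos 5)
Your proof is correct and follows essentially the same route as the paper's: restrict an optimal family of supplements realizing $\sigma^{\ast}(G)$ to $\langle g_i, N\rangle$, invoke Lemma~\ref{spancoprime} for the cosets $g_i^{a}N$ with $\gcd(a,m_i)=1$, and absorb the remaining cosets into the $\omega(m_i)$ maximal subgroups of $\langle g_i,N\rangle$ containing $N$ --- indeed you supply a detail the paper elides, namely the CRT step needed because Lemma~\ref{spancoprime} requires the exponent to be coprime to $|g_i|$ rather than to $|g_iN|$. The only blemish is the non sequitur ``no $K_j$ contains $N$, hence no $g_i$ lies in $N$'' (a union of proper supplements could still cover $N$), but this is harmless: you may simply discard the trivial coset from $\Omega$ without affecting $\langle\Omega\rangle=G$ or increasing $\sigma_{\Omega}(G)$, and in any case your covering argument goes through even when $m_i=1$.
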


\begin{proof}
There exists a set $\{g_1N,\ldots,g_kN\}$ generating $G/N$ with the property that $\sigma^{\ast}(G) = \sigma_{\Omega}(G)$, where $\Omega = g_1N \cup \ldots \cup g_kN$. In particular, for $i \in \{1,\ldots,k\}$ we have $\sigma_{g_iN}(G) \le \sigma^{\ast}(G)$. If $H$ is a proper supplement of $N$ in $G$, then $H \cap \langle g_i,N \rangle$ is a proper supplement of $N$ in $\langle g_i,N \rangle$, and therefore $g_iN$ is contained in a union of $\sigma_{g_iN}(G)$ proper subgroups of $\langle g_i,N \rangle$. By Lemma \ref{spancoprime}, in order to cover $\langle g_i,N \rangle$ with proper subgroups it suffices to use a family of proper subgroups covering $g_iN$ and the maximal subgroups of $\langle g_i,N \rangle$ containing $N$. This implies that $$\sigma(\langle g_i,N \rangle) \le \sigma_{g_iN}(G) + \omega(|g_iN|_{G/N}) \le \sigma^{\ast}(G) + \omega(|g_iN|_{G/N}),$$ concluding the proof.
\end{proof}

\begin{lem} \label{2n+1}
Let $n$ be a fixed positive integer. Let $\mathcal{F}$ be the family of monolithic primitive groups $X$ of primitivity degree at most $n$, with nonabelian socle $N$, where $X/N$ is either nonsolvable or cyclic, and where $\sigma^{\ast}(X) \le 2n+1$. If for all $X \in \mathcal{F}$ we have $\sigma(X) < 2 \sigma^{\ast}(X)$, then every nonabelian $\sigma$-elementary group $G$ with $\sigma(G) \le 2n+1$ is primitive and monolithic.
\end{lem}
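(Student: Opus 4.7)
The plan is to show that such a $G$ must be monolithic, after which primitivity is free: by Lemma \ref{sigmaprim}(1) we have $\Phi(G)=\{1\}$, and a monolithic group with trivial Frattini subgroup is primitive (Section \ref{sect:prelim}). The strategy is then to suppose for contradiction that $G$ has $k\ge 2$ minimal normal subgroups $N_1,\ldots,N_k$, and to derive a contradiction by applying the standing hypothesis to a carefully chosen $X_i$.

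First I would observe that Lemma \ref{sigmaprim}(2) forces all but at most one of the $N_i$ to be nonabelian, and that for any nonabelian $N_i$ the associated group $X_i$ already satisfies most of the membership conditions for $\mathcal{F}$: it is primitive monolithic with nonabelian socle; Lemma \ref{sigmastar} gives $\sigma^{\ast}(X_i)\le\sigma(G)\le 2n+1$; and since $X_i/\soc(X_i)\cong G/(N_iC_G(N_i))$ is a (possibly trivial) quotient of $G$, Lemma \ref{solcyc} guarantees it is cyclic or nonsolvable. Only the primitivity-degree condition $\ell_{X_i}(N_i)\le n$ is nontrivial, and by Lemma \ref{indexbound} it suffices to secure $\sigma^{\ast}(X_i)\le n$. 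Next, since distinct minimal normal subgroups centralize each other, $N_j\subseteq C_G(N_i)$ for $j\ne i$; hence $G/N_j$ surjects onto $X_i=G/C_G(N_i)$, and the $\sigma$-elementary property produces
\[ \sigma(G)<\sigma(G/N_j)\le\sigma(X_i)<2\sigma^{\ast}(X_i), \]
where the last step is the hypothesis, valid as soon as $X_i\in\mathcal{F}$.

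If at least two of the $N_i$ are nonabelian, I would pick $N_1$ nonabelian that minimizes $\sigma^{\ast}(X_1)$. Then Lemma \ref{sigmastar} and the minimality choice give $2\sigma^{\ast}(X_1)\le\sigma(G)\le 2n+1$, so $\sigma^{\ast}(X_1)\le n$ and $X_1\in\mathcal{F}$. Combining Lemma \ref{sigmastar} with the displayed chain then forces $2\sigma^{\ast}(X_1)\le\sigma(G)<2\sigma^{\ast}(X_1)$, a contradiction.

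The main obstacle is the remaining case $k=2$ with $N_1$ nonabelian and $N_2$ abelian, where the counting argument does not directly control $\sigma^{\ast}(X_1)$. Here the plan is to bring in Lemma \ref{abmns}: since $\Phi(G)=1$ makes $N_2$ complemented and $Z(G)=1$ by Lemma \ref{lem:trivcenter}, one gets $\sigma(G)\le 2|N_2|-1$. Together with $\sigma^{\ast}(X_2)\ge|N_2|$ (Lemmas \ref{five} and \ref{indexbound}) and Lemma \ref{sigmastar}, this yields $\sigma^{\ast}(X_1)\le|N_2|-1$. A brief split on whether $|N_2|\le n+1$ or $|N_2|\ge n+2$ (in the latter case using $\sigma^{\ast}(X_1)\le\sigma(G)-|N_2|\le (2n+1)-(n+2)=n-1$) then confirms $\sigma^{\ast}(X_1)\le n$, placing $X_1\in\mathcal{F}$. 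Applying the displayed master inequality together with $\sigma(G)\ge\sigma^{\ast}(X_1)+|N_2|$ forces $|N_2|<\sigma^{\ast}(X_1)$, which directly contradicts $\sigma^{\ast}(X_1)\le|N_2|-1$. The substantive point is that Lemma \ref{abmns} is exactly the extra input that ties $\sigma(G)$ back to $|N_2|$ and lets the mixed-socle case collapse into the same framework as the purely nonabelian one.
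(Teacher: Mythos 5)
Your proposal is correct and takes essentially the same route as the paper's proof: reduce to showing $G$ is monolithic, then split into the case of at least two nonabelian minimal normal subgroups (minimize $\sigma^{\ast}$ over the associated $X_i$ and derive $2\sigma^{\ast}(X_1)\le\sigma(G)<2\sigma^{\ast}(X_1)$) and the mixed abelian/nonabelian case (use Lemmas \ref{lem:trivcenter} and \ref{abmns} together with Lemma \ref{sigmastar} to trap $\sigma^{\ast}$ of the nonabelian factor strictly above and strictly below the order of the abelian one). The differences are cosmetic — your labeling is swapped, and you verify the primitivity-degree condition for membership in $\mathcal{F}$ via a case split on $|N_2|$ rather than the paper's direct inequality $\tfrac{1}{2}\sigma(G)+\ell_{X_2}(N_2)\le\sigma(G)$.
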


\begin{proof}
Let $G$ be a nonabelian $\sigma$-elementary group, and let $\soc(G) = N_1 \times \cdots \times N_t$. By Lemma \ref{sigmaprim} we know that at most one of the $N_i$'s is abelian. So we may assume that $N_i$ is nonabelian whenever $i \ge 2$. We need to show that $t=1$, so assume for the purpose of contradiction that $t \ge 2$. Let $X_i$ be the primitive monolithic group associated with $N_i$ for all $i=1,\ldots,t$. By Lemma \ref{sigmastar} we know that $\sum_{i=1}^t \sigma^{\ast}(X_i) \le \sigma(G) \le 2n+1$ for all $i$; in particular, $\sigma^{\ast}(X_i) \le 2n+1$ for all $i$. We consider the two possible cases.

Assume first that $N_1$ is abelian. In this case, since the center of $G$ is trivial by Lemma \ref{lem:trivcenter}, $\sigma(G) < 2|N_1|$ by Lemma \ref{abmns}, and, since $\ell_{X_1}(N_1) = |N_1|$, we have $$\frac{1}{2} \sigma(G)+\ell_{X_2}(N_2) \le \sum_{i=1}^t \sigma^{\ast}(X_i) \le \sigma(G).$$This means $2 \ell_{X_2}(N_2) \le \sigma(G) \le 2n+1$, and therefore $\ell_{X_2}(N_2) \le n$.  Since $X_2$ is a quotient of $G$, we conclude that $X_2/N_2$ is either nonsolvable or cyclic by Lemma \ref{solcyc}, implying $\sigma(X_2) < 2 \sigma^{\ast}(X_2)$ by hypothesis. Since $X_2$ is a quotient of $G$, we have $\sigma(G) \le \sigma(X_2) < 2 \sigma^{\ast}(X_2)$, and, by Lemma \ref{indexbound}, we have $\ell_{X_2}(N_2) \le \sigma^{\ast}(X_2)$.  Combining this with Lemma \ref{abmns} yields 
\[\ell_{X_1}(N_1)+\ell_{X_2}(N_2) \le |N_1|+\sigma^{\ast}(X_2) \le \sigma(G) < \min \{2|N_1|,2 \sigma^{\ast}(X_2)\}.\]
But $|N_1|+\sigma^{\ast}(X_2) < 2|N_1|$ implies $\sigma^{\ast}(X_2) < |N_1|$ and $|N_1|+\sigma^{\ast}(X_2) < 2 \sigma^{\ast}(X_2)$ implies $|N_1| < \sigma^{\ast}(X_2)$, a contradiction.

We may thus assume $N_1$ is nonabelian. In this case, we may assume that \[\min \{\sigma^{\ast}(X_i)\ :\ i=1,\ldots,n\} = \sigma^{\ast}(X_1).\] Therefore, $$2 \ell_{X_1}(N_1) \le 2 \sigma^{\ast}(X_1) \le \sum_{i=1}^t \sigma^{\ast}(X_i) \le \sigma(G) \le 2n+1,$$which implies $\ell_{X_1}(N_1) \le n$, and so, since $X_1$ is a quotient of $G$, we have that $X_1/N_1$ is either nonsolvable or cyclic by Lemma \ref{solcyc}, we have $\sigma(X_1) < 2 \sigma^{\ast}(X_1)$ by hypothesis. Hence $$t \cdot \sigma^{\ast}(X_1) \le \sum_{i=1}^t \sigma^{\ast}(X_i) \le \sigma(G) \le \sigma(X_1) < 2 \sigma^{\ast}(X_1),$$which contradicts the fact that $t \ge 2$, completing the proof.
\end{proof}

\begin{rem}
\label{rem:DL}
 If $\sigma(X) < 2\sigma^{\ast}(X)$ for all primitive monolithic groups $X$ with a nonabelian socle, then Lemma \ref{2n+1} implies that Conjecture \ref{conj:DL} is true.
\end{rem}

\begin{lem} \label{2star}
Let $X$ be a primitive monolithic group with nonabelian socle $N$. If $X/N$ is a cyclic $p$-group for some prime $p$ then $\sigma(X) \le \sigma^{\ast}(X)+1 < 2 \sigma^{\ast}(X)$.
\end{lem}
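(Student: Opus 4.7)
The strategy is to apply Lemma \ref{cur} using a single generator coset of $X/N$. Since $X/N$ is a cyclic $p$-group, every generating set of $X/N$ must contain a generator of $X/N$: in a cyclic $p$-group of order $p^e$, the elements of order strictly less than $p^e$ all lie in the unique maximal subgroup, so a set of elements generating the whole group must contain an element of order $p^e$. In particular, for any optimizer $\Omega=\bigcup_{i=1}^k g_iN$ realizing $\sigma^{\ast}(X)=\sigma_{\Omega}(X)$, at least one coset $g_jN$ is a generator of $X/N$, and hence $|g_jN|_{X/N}=|X/N|=p^e$.

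Applying Lemma \ref{cur} to this index $j$ yields
\[\sigma(\langle g_j,N\rangle)\;\le\;\sigma^{\ast}(X)+\omega(|g_jN|_{X/N})\;=\;\sigma^{\ast}(X)+1,\]
because $|g_jN|_{X/N}$ is a prime power. Since $g_jN$ generates $X/N$, we have $\langle g_j,N\rangle=X$, so $\sigma(X)\le\sigma^{\ast}(X)+1$.

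For the strict inequality $\sigma^{\ast}(X)+1<2\sigma^{\ast}(X)$, it suffices to verify that $\sigma^{\ast}(X)\ge 2$. No single proper supplement $H$ of $N$ in $X$ can cover any coset $gN$: if $gN\subseteq H$, then $g\in H$, whence $N=g^{-1}(gN)\subseteq H$; but then $HN=H$ is proper in $X$, contradicting $HN=X$. Thus $\sigma_{gN}(X)\ge 2$ for every coset of $N$, and in particular $\sigma^{\ast}(X)\ge 2$.

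The one piece requiring care is the extraction of a single-coset generator from the optimizer $\Omega$, which is where the hypothesis that $X/N$ is specifically a \emph{$p$-group} (rather than an arbitrary cyclic group) is used; for a general cyclic $X/N$ the same argument would only yield $\sigma(X)\le\sigma^{\ast}(X)+\omega(|X/N|)$. The remaining edge case in which $X=N$ (if one wishes to regard the trivial group as a cyclic $p$-group) is degenerate: then $\sigma^{\ast}(X)=\sigma(X)$, and both inequalities hold trivially since $\sigma(X)\ge 3$ for every nonabelian simple group.
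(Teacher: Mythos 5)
Your proof is correct and rests on the same underlying idea as the paper's: a union of cosets of $N$ generating $X$ must contain a coset mapping to a generator of the cyclic $p$-group $X/N$, that coset can be covered by $\sigma^{\ast}(X)$ supplements, and everything else lies in the unique maximal subgroup of $X$ containing $N$. The only difference is packaging: you obtain the bound as a corollary of Lemma \ref{cur} (with $\omega(p^e)=1$), whereas the paper reconstructs the cover directly, choosing a representative $x$ of $p$-power order so that Lemma \ref{spancoprime} covers all cosets $x^kN$ with $p \nmid k$ and then adjoining $\langle N, x^p\rangle$. Your route is slightly cleaner since Lemma \ref{cur} already encapsulates that construction, and you also supply the verification that $\sigma^{\ast}(X)\ge 2$ (needed for the strict inequality $\sigma^{\ast}(X)+1<2\sigma^{\ast}(X)$), which the paper leaves implicit.
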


\begin{proof}
Since $X/N$ is a cyclic $p$-group, it admits exactly one maximal subgroup. Therefore a union $\Omega$ of cosets of $N$ in $X$ generates $X$ if and only if it contains a coset $xN$, where $x$ does not belong to the unique maximal subgroup of $X$ containing $N$. It follows that there exists such $x$ with $\sigma^{\ast}(X) = \sigma_{xN}(X)$. Observe that since $X/N$ is a $p$-group, we may choose such an $x$ of $p$-power order. Now we can cover $xN$ with a family $\mathcal{K}$ consisting of $\sigma^{\ast}(X)$ supplements of $N$, which therefore cover all the cosets $x^kN$ with $k$ coprime to $p$ by Lemma \ref{spancoprime}. What is left to cover is every coset $x^{pk}N$ for $k \ge 1$. Thus adding $\langle N,x^p \rangle \neq X$, we conclude that $\sigma(X) \le \sigma^{\ast}(X)+1$.
\end{proof}

Our main result in this section is now an easy consequence of the above lemmas.

\begin{thm}
\label{thm:129mono}
Let $G$ be a nonabelian $\sigma$-elementary group with $\sigma(G) \le 129$. Then $G$ is primitive and monolithic with primitivity degree at most $129$.
\end{thm}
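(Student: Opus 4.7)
The strategy is to derive Theorem \ref{thm:129mono} from Lemma \ref{2n+1} applied with $n=64$, so that $2n+1=129$. Lemma \ref{2n+1} immediately yields that $G$ is primitive and monolithic, and the primitivity-degree bound follows from Lemma \ref{indexbound}: since $G$ is itself primitive and monolithic, $\ell_G(\soc(G))\le \sigma^{\ast}(G)\le \sigma(G)\le 129$, and since any proper supplement of $\soc(G)$ is contained in a maximal subgroup that must still supplement $\soc(G)$ (and hence, in the monolithic setting, has trivial normal core), $\ell_G(\soc(G))$ coincides with the smallest primitivity degree of $G$.

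The content of the proof is therefore the verification of the hypothesis of Lemma \ref{2n+1}: for every primitive monolithic $X$ with nonabelian socle $N$, primitivity degree at most $64$, $X/N$ nonsolvable or cyclic, and $\sigma^{\ast}(X)\le 129$, one must show $\sigma(X)<2\sigma^{\ast}(X)$. I would split on the structure of $X/N$. When $X/N$ is a cyclic $p$-group, Lemma \ref{2star} directly gives $\sigma(X)\le \sigma^{\ast}(X)+1<2\sigma^{\ast}(X)$, using $\sigma^{\ast}(X)\ge \ell_X(N)\ge m(T)^r\ge 5$. When $X/N$ is cyclic but not of prime-power order, Lemma \ref{cur} furnishes a small generating set $\{g_iN\}$ with $\sigma(\langle g_i,N\rangle)\le \sigma^{\ast}(X)+\omega(|g_iN|)$, and a cover of $X$ can be assembled from $\sigma^{\ast}(X)$ supplements of $N$ covering one well-chosen coset, together with the few maximal subgroups of $X$ properly containing $N$ and covers of the remaining generating cosets; careful bookkeeping on $\omega(|X/N|)$ keeps the total strictly below $2\sigma^{\ast}(X)$.

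The main work, and the principal obstacle, lies in the nonsolvable case. The constraints on primitivity degree and on $\sigma^{\ast}(X)$, combined with the bound $\ell_X(N)\ge m(T)^r$ of Lemma \ref{five}, restrict the socle $N=T^r$ to a manageable finite list; once $N$ is fixed, $X$ lies between $N$ and $\Aut(N)$, leaving finitely many candidates for $X$. Using GAP one enumerates these candidates, and for each applies the computational techniques of Section \ref{sect:comp}, namely Algorithm \hyperref[subsect:LP]{KNS} and Algorithm \ref{alg}, to compute or bound $\sigma(X)$ and $\sigma^{\ast}(X)$ tightly enough to verify $\sigma(X)<2\sigma^{\ast}(X)$. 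Borderline groups for which the general algorithms are not sharp enough are handled by the ad hoc arguments of Appendix \ref{sect:calcspecific}; this case-by-case verification, especially for large almost-simple groups where $\sigma(X)$ is close to $2\sigma^{\ast}(X)$, is where the real effort is expected to concentrate.
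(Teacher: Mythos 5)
Your overall strategy coincides with the paper's: apply Lemma \ref{2n+1} with $n=64$, dispose of cyclic $p$-group quotients by Lemma \ref{2star}, treat the remaining quotients via Lemma \ref{cur}, and get the primitivity degree bound from Lemma \ref{indexbound}. However, you have misplaced the difficulty and left a genuine soft spot. The case you call the ``main work'' is empty: for a primitive monolithic $X$ with nonabelian socle $N=T^r$ and primitivity degree at most $64$, Lemma \ref{five} gives $m(T)^r\le 64$ with $m(T)\ge 5$, so $r\le 2$; hence $X/N$ embeds in $\Out(T)\wr S_r$ with $r\le 2$, which is solvable by the Schreier conjecture. So $X/N$ is never nonsolvable in this range, it must be cyclic, and the paper's GAP check shows the only group in range with $X/N$ cyclic but not a $p$-group is $X\cong\Aut(\PSL(2,27))$, with $X/N\cong C_6$. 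No enumeration of nonsolvable quotients, and none of the heavy computation you anticipate, occurs in this proof (that computation belongs to the later determination of $\sigma$ for the candidate groups, not to Theorem \ref{thm:129mono}).

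The genuine gap is in your assembly of a cover when $X/N$ is cyclic of non-prime-power order. By definition $\sigma^{\ast}(X)$ is the minimum of $\sigma_\Omega(X)$ over generating unions of cosets $\Omega$, and the optimal $\Omega$ need not contain a coset that generates $X/N$ by itself (e.g.\ $\Omega=g^2N\cup g^3N$ when $X/N\cong C_6$). In that situation the $\sigma^{\ast}(X)$ supplements realizing the minimum, together with the $\omega(|X/N|)$ maximal overgroups of $N$, fail to cover the cosets $gN$ and $g^5N$, and $\sigma_{gN}(X)$ is not bounded by $\sigma^{\ast}(X)$ a priori; so ``careful bookkeeping on $\omega(|X/N|)$'' does not by itself keep the total below $2\sigma^{\ast}(X)$. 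The paper closes this using the dichotomy implicit in Lemma \ref{cur}: either some $\langle g_i,N\rangle=X$, giving $\sigma(X)\le\sigma^{\ast}(X)+2<2\sigma^{\ast}(X)$, or some $\langle g_i,N\rangle$ is the proper subgroup $\PGL(2,27)$, forcing $\sigma^{\ast}(X)\ge\sigma(\PGL(2,27))-1=378$, which contradicts $\sigma^{\ast}(X)\le 129$. Some argument of this kind is needed to complete your second case.
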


\begin{proof}
We show that $G$ is primitive and monolithic. By Lemma \ref{2n+1}, to do so it is enough to show that $\sigma(X) < 2 \sigma^{\ast}(X)$ whenever $X$ is a primitive monolithic group of degree at most $64$ satisfying each of the following three conditions: (1) $X$ has nonabelian socle $N$, (2) $X/N$ is either nonsolvable or cyclic, and (3) $\sigma^{\ast}(X) \le 129$. Let $X$ be such a group.  If $X/N$ is a cyclic $p$-group for some prime $p$, then Lemma \ref{2star} implies $\sigma(X) < 2 \sigma^{\ast}(X)$.  Now assume $X/N$ is not a cyclic $p$-group. A GAP check shows that the only possibility is $X \cong \Aut(\PSL(2,27))$, in which case $X/N \cong C_6$ and $\ell_X(N)=28$. In this case, Lemma \ref{cur} implies that either $\sigma(X) \le \sigma^{\ast}(X) + 2 < 2 \sigma^{\ast}(X)$, or, for one of the $g_i$'s in this lemma, $\langle N,g_i \rangle \cong \PGL(2,27)$, and so $\sigma^{\ast}(X) \ge \sigma(\PGL(2,27)) - 1 = 378$ holds, a contradiction to $\sigma^{\ast}(X) \le 129$. Thus $\sigma(X) < 2 \sigma^{\ast}(X)$. Lemma \ref{indexbound} implies that the smallest primitivity degree of $G$ is at most $\sigma(G)$.
\end{proof}

\section{Computational methods}
\label{sect:comp}

In this section, we outline the computational methods used to prove Theorem \ref{thm:129}.  By Tomkinson's result (see Proposition \ref{prop:solvable} below), it suffices to consider nonsolvable $\sigma$-elementary groups, and by Theorem \ref{thm:129mono}, any nonabelian $\sigma$-elementary group $G$ with $\sigma(G) \le 129$ is primitive and monolithic with a primitivity degree of at most $129$.   Using $\GAP$, we are able to list every nonsolvable primitive group with degree of primitivity at most $129$.  The covering number of many of these groups is known; see Section \ref{sect:known} below.  Moreover, the covering number of affine general linear groups and affine special linear groups when $n \ge 3$ are determined in Section \ref{sect:affine}.  

All remaining groups, i.e., those groups not explicitly discussed in Sections \ref{sect:known} and \ref{sect:affine}, are listed in Tables \ref{tbl:main1}, \ref{tbl:main2}, \ref{tbl:main3}, \ref{tbl:main4}, and \ref{tbl:main5}, along with a reference as to how the computation was completed for each group.  In many cases, the group has a noncyclic solvable homomorphic image whose covering number is the same as the original group.  In these cases, the homomorphic image is listed in the reference column.  For many primitive groups of affine type -- that is, those that have a unique elementary abelian minimal normal subgroup -- a result due to Detomi and Lucchini (also proved independently by S. M. Jafarian Amiri \cite{Amiri}) can be used:  if $G$ is such a primitive group with elementary abelian minimal normal subgroup $N$ and point stabilizer $H$ in the primitive action and $\sigma(H) < |N|$, then $\sigma(G) = \sigma(H)$; see Lemma \ref{lem:Amiri}.

The covering number of many other groups can be computed exactly using either linear programming methods or other computational techniques.  The details are discussed in Subsections \ref{subsect:LP} and \ref{subsect:verify}, respectively.  There are only a few groups whose covering number cannot be determined using these methods, and they are considered on an ad hoc basis in Appendix \ref{sect:calcspecific}.  

\subsection{Linear programming methods}
\label{subsect:LP}

In \cite{KNS}, the authors created a program in GAP \cite{GAP} that takes as input a group $G$, a list $\mathcal{E}$ of 
elements of $G$, a list $\mathcal{M}$ of maximal subgroups of $G$, and the name of a file of type .lp to which output is written.  This output file is read by the linear optimization software GUROBI \cite{Gu}, which then determines the least number of subgroups conjugate to one of the subgroups in $\mathcal{M}$ needed to cover the elements conjugate to the elements of $\mathcal{E}$.  This function is referred to in the remainder of the paper as ``Algorithm \hyperref[subsect:LP]{KNS},'' and the GAP code for this program can be found in \cite{KNS}. 

For a group of order approximately 500000 or less, Algorithm \hyperref[subsect:LP]{KNS} generally will return a .lp file within 24 hours.  The calculations done here were completed with a laptop that has a Core i7 processor and 16 GB of RAM. The optimization software GUROBI sometimes is able to determine the exact covering number within seconds; other times, the program runs out of memory, but is still able to provide good bounds.  For instance, the previous bounds on the covering number of $J_2$ were $380 \le \sigma(J_2) \le 1220$, given in \cite{HolmesMaroti}.  With the aid of Algorithm \hyperref[subsect:LP]{KNS} and GUROBI, we have improved these bounds to $1063 \le \sigma(J_2) \le 1121$.

\subsection{A verification method for minimal covers and a greedy algorithm}
\label{subsect:verify}

Mar\'{o}ti introduced the following technique for showing that a cover is minimal.  Following \cite{Maroti}, if $\Pi \subseteq G$, we define $\sigma(\Pi)$ to be the least integer $m$ such that $\Pi$ is a subset of the set-theoretic union of $m$ subgroups of $G$; clearly, $\sigma(\Pi) \le \sigma(G)$.  A set $\mathcal{H} = \{H_1, \dots, H_m\}$ of $m$ proper subgroups of $G$ is \textit{definitely unbeatable} on $\Pi$ if both of the following conditions hold:
\begin{itemize}
 \item[(i)] the elements of $\Pi$ are partitioned among the subgroups in $\mathcal{H}$, and
 \item[(ii)] for all subgroups $K \le G$ that are not contained in $\mathcal{H}$, we have $|K \cap \Pi| \le |H_i \cap \Pi|$ for each $i$, $1 \le i \le m$.
\end{itemize}
If $\mathcal{H}$ is definitely unbeatable on $\Pi$, then $|\mathcal{H}| = \sigma(\Pi) \le \sigma(G)$.

However, definite unbeatability is often too stringent a condition.  With this in mind, a more complicated but more generally applicable condition was introduced in \cite{Swartz}.  The following lemma is a slight modification of that condition (in that the parameter $c(M)$ may equal $1$ here) and is useful in cases when the minimal cover is not unique.

\begin{lem}
\label{lem:keylemma}
Let $\Pi$ be a union of conjugacy classes of elements of $G$; let $I \subseteq I_G$, where $I_G$ is an index set for the conjugacy classes of maximal subgroups of $G$; and let $\mathcal{C} = \bigcup_{i \in I} \mathcal{M}_i$ be a cover of $\Pi$ such that each $\mathcal{M}_i$ denotes a conjugacy class of maximal subgroups, the elements of $\Pi$ are partitioned among the subgroups in $\mathcal{C}$, and each subgroup in $\mathcal{C}$ contains elements of $\Pi$.  For a maximal subgroup $M \not\in \mathcal{C}$, define \[c(M) := \sum\limits_{i \in I}\frac{|M \cap \Pi_i|}{|M_i \cap \Pi_i|},\] where $M_i$ is a maximal subgroup in $\mathcal{M}_i$.  If $c(M) \le 1$ for all maximal subgroups $M \not\in \mathcal{C}$, then $\mathcal{C}$ is a minimal cover of the elements of $\Pi$.  Moreover, if $c(M) < 1$ for all maximal subgroups $M \not\in \mathcal{C}$, then $\mathcal{C}$ is the unique minimal cover of the elements of $\Pi$ that uses only maximal subgroups.
\end{lem}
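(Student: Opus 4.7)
The plan is to prove $|\mathcal{C}'| \ge |\mathcal{C}|$ for every cover $\mathcal{C}'$ of $\Pi$ by maximal subgroups via a weighted double-counting argument, and then to extract uniqueness from the case of equality. First I would set up notation: let $\Pi_i$ denote the set of elements of $\Pi$ lying in some member of $\mathcal{M}_i$. The partition hypothesis then gives $\Pi = \bigsqcup_{i\in I} \Pi_i$, and since both $\Pi$ and each union $\bigcup\mathcal{M}_i$ are $G$-invariant, each $\Pi_i$ is a union of conjugacy classes. In particular $|M \cap \Pi_i|$ depends only on the $G$-conjugacy class of $M$, so the denominator $|M_i \cap \Pi_i|$ in the definition of $c(M)$ does not depend on the representative $M_i \in \mathcal{M}_i$ chosen.

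The decisive preliminary step is to show that $|\mathcal{M}_i| = |\Pi_i|/|M_i \cap \Pi_i|$ for every $i \in I$. I would fix a conjugacy class $C \subseteq \Pi_i$ and count pairs $(g, M) \in C \times \mathcal{M}_i$ with $g \in M$ in two ways: the partition hypothesis forces each $g \in C \subseteq \Pi_i$ into exactly one member of $\mathcal{M}_i$, giving a count of $|C|$; counting by $M$ instead gives $|\mathcal{M}_i|\cdot|M_i \cap C|$. Summing $|C| = |\mathcal{M}_i|\cdot|M_i \cap C|$ over the conjugacy classes making up $\Pi_i$ yields the identity. The same bookkeeping shows that each $K \in \mathcal{M}_j \subseteq \mathcal{C}$ satisfies $c(K) = 1$: the $i = j$ summand contributes $1$ by $G$-conjugacy of $K$ and $M_j$, and for $i \neq j$ any element of $K \cap \Pi_i$ would be covered by two distinct members of $\mathcal{C}$, contradicting the partition.

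Now, for any cover $\mathcal{C}'$ of $\Pi$ by maximal subgroups I would reorganize the weighted sum as
\[
\sum_{K \in \mathcal{C}'} c(K) \;=\; \sum_{i\in I} \frac{1}{|M_i \cap \Pi_i|}\sum_{K \in \mathcal{C}'} |K \cap \Pi_i| \;\ge\; \sum_{i\in I} \frac{|\Pi_i|}{|M_i \cap \Pi_i|} \;=\; \sum_{i\in I} |\mathcal{M}_i| \;=\; |\mathcal{C}|,
\]
using that $\mathcal{C}'$ covers each $\Pi_i$ together with the preliminary identity. Combining this with $c(K) \le 1$ for every $K \in \mathcal{C}'$ (by hypothesis if $K \notin \mathcal{C}$, by the remark above if $K \in \mathcal{C}$) gives $|\mathcal{C}'| \ge \sum_K c(K) \ge |\mathcal{C}|$, proving minimality. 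Under the strict condition $c(M) < 1$ for every $M \notin \mathcal{C}$, any minimal cover $\mathcal{C}'$ by maximal subgroups must force equality throughout, hence $c(K) = 1$ for every $K \in \mathcal{C}'$; no $K \in \mathcal{C}' \setminus \mathcal{C}$ can satisfy this, so $\mathcal{C}' \subseteq \mathcal{C}$, and equality of cardinalities gives $\mathcal{C}' = \mathcal{C}$. The main obstacle is conceptual rather than technical: one must recognize that the partition hypothesis makes $|\mathcal{M}_i|$ a common ratio $|C|/|M_i \cap C|$ across all conjugacy classes $C \subseteq \Pi_i$, which is precisely what calibrates $c(M)$ to be the right balanced weight for the inequality to close.
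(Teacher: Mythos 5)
Your proof is correct and rests on the same weighted double-counting idea as the paper's: the quantity $c$ acts as a fractional certificate, with each maximal subgroup contributing weight at most $1$ while the partition hypothesis forces any cover of $\Pi$ by maximal subgroups to accumulate total weight at least $\sum_{i}|\Pi_i|/|M_i\cap\Pi_i| = |\mathcal{C}|$. The differences are organizational rather than substantive --- the paper runs the same exchange of summation after first passing to the symmetric difference of $\mathcal{C}$ with the competing cover, and it cites \cite{Swartz} for the strict-inequality uniqueness clause that you prove directly --- so your argument is, if anything, slightly more self-contained.
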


\begin{proof}
The statement with $c(M) < 1$ was proved in \cite{Swartz}; the proof here is nearly identical, save for some strict inequalities being changed to allow for equalities, but it is included for the sake of completeness.  Let $\mathcal{C}$ and $\Pi$ be as in the statement of the lemma, and assume that $c(M) \le 1$ for all maximal subgroups not in $\mathcal{C}$.  Suppose that $\mathcal{B}$ is another cover of the elements of $\Pi$, and let $\mathcal{C}' = \mathcal{C} \backslash (\mathcal{C} \cap \mathcal{B})$ and $\mathcal{B}' = \mathcal{B} \backslash (\mathcal{C} \cap \mathcal{B})$.  The collection $\mathcal{C}'$ consists only of subgroups from classes $\mathcal{M}_i$, where $i \in I$, and we let $a_i$ be the number of subgroups from $\mathcal{M}_i$ in $\mathcal{C}'$.  Similarly, the collection $\mathcal{B}'$ consists only of subgroups from classes $\mathcal{M}_j$, where $j \not\in I$, and we let $b_j$ be the number of subgroups from $\mathcal{M}_j$ in $\mathcal{B}'$.  Note that, since $\mathcal{B}$ is a different cover, for some $j \not\in I$, we have $b_j > 0$.  

By removing $a_i$ subgroups in class $\mathcal{M}_i$ from $\mathcal{C}$, the new subgroups in $\mathcal{B}'$ must cover the elements of $\Pi$ that were in these subgroups.  Hence, for all $i \in I$,  if $M_k$ denotes a subgroup in class $\mathcal{M}_k$ for each $k$, $$a_i |M_i \cap \Pi_i| \le \sum\limits_{j \not\in I} b_j |M_j \cap \Pi_i|,$$ which in turn implies that, for all $i \in I$, we have $$a_i \le \sum\limits_{j \not\in I} b_j \frac{|M_j \cap \Pi_i|}{|M_i \cap \Pi_i|}.$$  This means that
\begin{align*}
|\mathcal{C}'| &= \sum\limits_{i \in I} a_i \le \sum\limits_{i\in I}\sum\limits_{j \not\in I} b_j \frac{|M_j \cap \Pi_i|}{|M_i \cap \Pi_i|} = \sum\limits_{j \not\in I}\sum\limits_{i \in I} b_j \frac{|M_j \cap \Pi_i|}{|M_i \cap \Pi_i|}\\
&= \sum\limits_{j \not\in I}\left(\sum\limits_{i \in I} \frac{|M_j \cap \Pi_i|}{|M_i \cap \Pi_i|}  \right)b_j = \sum\limits_{j \not\in I} c(M_j)b_j \le \sum\limits_{j \not\in I} b_j = |\mathcal{B}'|,
\end{align*}
which shows that $$|\mathcal{C}| = |\mathcal{C}'| + |\mathcal{C} \cap \mathcal{B}| \le |\mathcal{B}'| + |\mathcal{C} \cap \mathcal{B}| = |\mathcal{B}|.$$ Hence, any other cover of the elements of $\Pi$ using only maximal subgroups contains at least as many subgroups as $\mathcal{C}$.  Therefore, $\mathcal{C}$ is a minimal cover of the elements of $\Pi$. 
\end{proof}

\begin{algorithm}
  \caption{\textsc{CoveringNumberBounds}}\label{alg}
  \begin{algorithmic}[1]
   \Require A finite group $G$.
   \Ensure A triple $(\ell, u, c)$, where $\ell \le \sigma(G) \le u$ and $c$ is \textsc{True} if it is verified that $\sigma(G) = u$ and \textsc{False} otherwise.
  \State $max$ := list of representatives of each class of maximal subgroups of $G$
  \State $eltM$ := for each subgroup $M$ in $max$, a list of representatives of each conjugacy class 
  \indent of elements of $M$
  \State $conj$ := list of nonidentity conjugacy classes of elements of $G$
  
  \State $u$ := 0
  \State $minlist$ := an empty list
  \State $cvalues$ := list with every entry $0$ of length the size of $max$ 
  
  \While{$conj$ is nonempty}
    \State $elts$ := for each class $x^G$ left in $conj$, the elements of $eltM$ that are in $x^G$
    \State $ints$ := for each class $x^G$ left in $conj$, a list of the sizes of the intersection of $x^G$ with 
    \indent each subgroup $M$ in $max$, created using the list $elts$ by summing the sizes of the 
    \indent conjugacy classes in $M$ over the set of elements in $eltM$ that are in $x^G$
    \State $mins$ := for each class $x^G$ left in $conj$, the minimum number of subgroups needed 
    \indent to cover $x^G$, calculated by dividing the size of $x^G$ by the maximum intersection size 
    \indent from $ints$ corresponding to $x^G$
    \State $best$ := maximum of $mins$, which can be thought of as the minimum number of 
    \indent subgroups needed at this stage to get a cover
    \State \textbf{add} $best$ to $minlist$
    \State $x_0^G$:= the conjugacy class in $conj$ that needed $best$ subgroups to be covered
    \State $M_0$ := a maximal subgroup from $max$ from the class used to cover $x_0^G$ with $best$    \indent subgroups 
    \State $cvalueupdate$ := list with entry $|M \cap x_0^G|/|M_0 \cap x_0^G|$ for each $M \in max$
    \State $cvalue$:= $cvalue$ + $cvalueupdate$ (addition is entrywise)
    \If{ $best \neq |G:M_0|$ }
    \State $c$ := \textsc{False}
    \EndIf
    \State $u$ := $u + |G:M_0|$
    \State $conj$ := any remaining conjugacy classes that do not intersect $M_0$
  \EndWhile
  \State $\ell$ := the first entry in $minlist$
  \If{ $c = \textsc{True}$ }
  \For{ $i$ in $cvalue$ }
  \If { $i > 1$ }
  \State $c := \textsc{False}$
  \EndIf
  \EndFor
  \EndIf
  \Return $(\ell, u, c)$
  \end{algorithmic}
\end{algorithm}

In practice, there is often a union of conjugacy classes $\Pi$ of elements of $G$ and a minimal cover $\mathcal{C}$ of the elements of $\Pi$ that satisfies the hypotheses of Lemma \ref{lem:keylemma}.  We can design an algorithm exploiting this idea that works roughly as follows:  each conjugacy class of elements and representatives for each conjugacy class of maximal subgroups are computed in $\GAP$.  Next, the conjugacy class $x^G$ of elements that requires the most maximal subgroups to cover is determined.  Greedily, we take as part of a cover all subgroups from a conjugacy class $\calM$ of maximal subgroups that most efficiently covers $x^G$.  All elements that are covered by the subgroups of $\calM$ are removed, and this process is repeated again and again until all elements are covered.  Often, the cover produced this way is a minimal cover, and this can typically be verified by using Lemma \ref{lem:keylemma}.  Even if the cover is not verifiably minimal, the function returns upper and lower bounds for $\sigma(G)$.  The steps of this procedure are listed in Algorithm \ref{alg}, which is written in pseudocode. 

We remark that, while it would be ``simpler'' to calculate $ints$ in Step 10 of Algorithm \ref{alg} by taking the intersection of class $x^G$ with each subgroup in $max$, for many groups the sizes of the conjugacy classes are quite large, and it is much faster for such groups to calculate the intersection sizes as described in the pseudocode.  Algorithm \ref{alg} can also be altered to return additional information, such as the classes $x_0^G$ and subgroups $M_0$ chosen in various iterations of the \textbf{while} loop, which is useful for ad hoc calculations like those in Appendix \ref{sect:calcspecific}.

\section{Known bounds on and values of covering numbers}
\label{sect:known}

We collect in this section a list of known results regarding the covering number of specific families of groups.  We use the notation $S_n$ to refer to the symmetric group of degree $n$ and $A_n$ to refer to the alternating group of degree $n$.  The first proposition combines the results of Cohn and Tomkinson and completely solves the problem of which integers are covering numbers of solvable groups. In the tables we also indicate the smallest primitivity degree $m(G)$ of any given primitive group $G$.

\begin{prop}
 \label{prop:solvable}
\begin{enumerate}[(i)]	
 \item \cite[Corollary to Lemma 17]{Cohn} For every prime $p$ and positive integer $d$, there exists a group $G$ with covering number $p^d + 1$.
 \item \cite[Theorem 2.2]{Tomkinson} Let $G$ be a finite solvable group and let $H/K$ be the smallest chief factor of $G$ having more than one complement in $G$.  Then $\sigma(G) = |H/K| + 1$.  In particular, the covering number of any (noncyclic) solvable group has the form $p^{d} + 1$, where $p$ is a prime and $d$ is a positive integer. 
\end{enumerate}
\end{prop}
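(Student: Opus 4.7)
The plan is to handle the two parts separately: part (i) is an explicit existence result, while part (ii) is a structural characterization that needs both a construction and a matching lower bound.

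For part (i), I would construct the one-dimensional affine group $G = \AGL(1,p^d) = V \rtimes C$, with $V=(\mathbb{F}_{p^d},+)$ and $C=(\mathbb{F}_{p^d}^\times,\cdot)$ acting by multiplication (the edge case $p^d=2$ being handled instead by the Klein four-group). Since $C$ acts fixed-point-freely on $V\setminus\{0\}$, $G$ is a Frobenius group whose $p^d$ complements $C^g$ partition $G\setminus V$. First I would observe that $C$ is maximal in $G$: any proper overgroup $H\ge C$ has $H\cap V$ a $C$-invariant subspace of the irreducible $C$-module $V$, forcing $H = C$ or $H = G$. Consequently an element of $C$ of order $p^d - 1$ lies in exactly one conjugate of $C$ and in no other proper subgroup. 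Any cover must therefore include all $p^d$ conjugates of $C$, together with at least one subgroup containing $V$ to cover the nonidentity elements of $V$. This gives $\sigma(G)\ge p^d+1$, matched by the explicit cover $\{V\}\cup\{C^g : g \in G\}$, so $\sigma(G) = p^d + 1$.

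For part (ii), set $q=|H/K|$, which is a prime power because $G$ is solvable. The upper bound comes from an explicit cover: in the quotient $\overline{G}:=G/K$, the chief factor $\overline{H}:=H/K$ is minimal normal with multiple complements, so, adapting the construction in Lemma \ref{abmns}, the union of the preimage of $\overline{H}$ in $G$ together with the $q$ distinct $\overline{H}$-conjugates of a single complement of $\overline{H}$ in $\overline{G}$ (pulled back to $G$) forms a cover of $G$ of size $q+1$. For the lower bound, I would apply Lemma \ref{lem:tom3.2} to any hypothetical cover $G = U_1 \cup \cdots \cup U_h \cup V_1 \cup \cdots \cup V_k$, where each $U_j$ contains $H$ and each $V_i$ is a proper supplement of $H$ in $G$. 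The key input is that the minimality assumption on $H/K$ forces every proper supplement of $H$ in $G$ to have index at least $q$, so the smallest such index $\beta_1$ satisfies $\beta_1 \ge q$, and the lemma then gives $k \ge \beta_1 \ge q$. Together with at least one subgroup needed to contain $H$ (to cover $H/K$ itself), the total number of subgroups is at least $q+1$.

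The main obstacle is the lower bound in part (ii): one must verify that the choice of $H/K$ as the \emph{smallest} chief factor with multiple complements really does force every supplement of $H$ in $G$ to have index at least $q$. This requires an induction along a chief series of $G$ that carefully tracks how smaller, uniquely complemented chief factors could in principle contribute supplements of $H$ with smaller index, and rules these out using the uniqueness of their complements. A secondary subtlety in the upper bound is verifying that the $\overline{H}$-conjugates of a single complement of $\overline{H}$ in $\overline{G}$ really are pairwise distinct and number exactly $q$, which amounts to showing that the normalizer of such a complement in $\overline{H}$ is trivial, a consequence of the non-centrality of $\overline{H}$ forced by the existence of multiple complements.
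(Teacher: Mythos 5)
The paper does not prove this proposition: both parts are quoted verbatim from the literature (Cohn's Corollary to Lemma 17 and Tomkinson's Theorem 2.2), so there is no internal argument to compare yours against. Your part (i) is a correct, self-contained proof and is essentially Cohn's: $\AGL(1,p^d)$ is a Frobenius group, the complement $C$ is maximal and self-normalizing, a generator of $C$ lies in no proper subgroup other than $C$ itself, so all $p^d$ conjugates of $C$ are forced, and one further subgroup is needed for the nonidentity elements of $V$; the matching cover $\{V\}\cup\{C^g\}$ finishes it.

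Part (ii), however, has genuine gaps in both directions, not only the one you flag. For the upper bound, the cover you describe fails when $\overline{H}=H/K$ is central in $\overline{G}=G/K$: in $G=C_2\times C_2$ with $H$ one of the factors, a complement $L$ has a single $\overline{H}$-conjugate (itself), and $H\cup L$ misses the third involution. Your proposed remedy --- that having more than one complement forces $\overline{H}$ to be non-central --- is false, as this same example shows. The correct construction takes \emph{all} complements of $H/K$, and one must prove that for a suitably chosen $H/K$ there are exactly $|H/K|$ of them; the choice matters (in $C_2^3$ the bottom factor of a chief series has four complements while a middle factor of the same order has two, and only the latter yields a cover of size $3$). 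For the lower bound, the assertion that minimality of $|H/K|$ forces every proper supplement of $H$ to have index at least $q$ is false as stated. Take $G=C_2\times A_4$ with center $Z$ and Klein subgroup $V\le A_4$: both $V/1$ and $(Z\times V)/Z$ are chief factors of order $4$ with four complements, and no smaller chief factor has more than one complement, so either is a legitimate choice of $H/K$; but for $H=Z\times V$ the maximal subgroup $A_4$ satisfies $A_4\cdot H=G$, $H\not\le A_4$, and has index $2<4=q$. Repairing this requires choosing $H$ minimal among all such tops, reducing to covers by maximal subgroups, invoking that a maximal subgroup of a solvable group complements a chief factor whose order equals its index (so non-normal maximal subgroups automatically have index at least $q$), and showing that a normal maximal supplement of index $p'<q$ would produce a multiply complemented chief factor of order $q$ strictly below $H$, contradicting minimality. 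Finally, applying Lemma \ref{lem:tom3.2} also requires verifying its hypothesis that the subgroups of the cover containing $H$ do not already exhaust $G$, which needs the inductive structure of Tomkinson's argument (passing to $G/H$). As written, part (ii) is a reasonable outline with the decisive steps absent.
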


The following table summarizes what is currently known about covering numbers of symmetric groups.

\begin{center}
 \begin{table}[H]
	
	\centering
  \begin{tabular}{c|c|c|c}
    Group & $m(G)$ & Covering Number & Citation \\
    \hline
    $S_5$ & $5$ & $16$ & \cite{Cohn}\\
    $S_6$ & $6$ & $13$ & \cite{AAS}\\
    $S_8$ & $8$ & $64$ & \cite{KNS}\\
    $S_9$ & $9$ & $256$ & \cite{KNS}\\
    $S_{10}$ & $10$ & $221$ & \cite{KNS}\\
    $S_{12}$ & $12$ & $761$ &\cite{KNS}\\
    $S_{14}$ & $14$ & $3096$ & \cite{OppenheimSwartz}\\
    $S_{18}$ & $18$ & $36773$ & \cite{Swartz}\\
    $S_{6k}, k \ge 4$ & $6k$ & $\frac{1}{2} {{6k} \choose {3k}} + \sum_{i=0}^{2k - 1}\limits {{6k} \choose {i}}$ & \cite{Swartz}\\
    $S_{2k+1}, k \neq 4$ & $2k+1$ & $2^{2k}$ & \cite{Maroti}\\
    $S_{2k}$, $k \ge 16$ & $2k$ & $> \frac{1}{2}{{2k} \choose k}$ & \cite{Maroti}\\
  \end{tabular}
	\caption{Covering numbers of symmetric groups}
	\label{tbl:Sn}
\end{table}
\end{center}



The following table summarizes what is currently known about covering numbers of alternating groups.

 \begin{table}[H]
	\centering
  \begin{tabular}{c|c|c|c}
   Group & $m(G)$ & Covering Number & Citation\\
   \hline
   $A_5$ & $5$ & $10$ & \cite{Cohn}\\
   $A_6$ & $6$ & $16$ & \cite{BFS}\\
   $A_7$ & $7$ & $31$ & \cite{KappeRedden}\\
   $A_8$ & $8$ & $71$ & \cite{KappeRedden}\\
   $A_9$ & $9$ & $157$ & \cite{EMN}\\
   $A_{10}$ & $10$ & $256$ & \cite{Maroti}\\
   $A_{11}$ & $11$ & $2751$ & \cite{EMN}\\
   $A_{4k+2}$ & $4k+2$ & $2^{4k}$ & \cite{Maroti}\\
   $A_n$, $n \ge 12$ & $n$ & $\ge 2^{n-2}$ & \cite{Maroti}
  \end{tabular}
	\caption{Covering numbers of alternating groups}
	\label{tbl:An}
 \end{table}

%

The following table summarizes what is currently in the literature regarding covering numbers of projective linear groups of dimension $2$.

\begin{table}[H]
	\centering
  \begin{tabular}{c|c|c|c}
  Group & $m(G)$ & Covering Number & Citation\\
  \hline
  $\PSL(2,5)$ & $6$ & $10$ & \cite{Cohn}\\
  $\PGL(2,5)$ & $6$  & $16$ & \cite{Cohn}\\
  $\PSL(2,7)$ & $7$ & $15$ & \cite{BFS}\\
  $\PGL(2,7)$ & $8$ & $29$ & \cite{BFS}\\
  $\PSL(2,9)$ & $10$ & $16$ & \cite{BFS}\\
  $\PGL(2,9)$ & $10$ & $46$ & \cite{BFS}\\
  $\PGammaL(2,8)$ & $9$ & $29$ & \cite{Garonzi3}\\
  $\PSL(2,q)$, $\PGL(2,q)$, $q \ge 8$ even & $q+1$ & $\frac{1}{2}q(q+1)$ & \cite{BFS}\\
  $\PSL(2,q)$, $\PGL(2,q)$, $q > 9$ odd & $q+1$ & $\frac{1}{2}q(q+1) + 1$ & \cite{BFS}\\
  \end{tabular}
     \caption{Covering numbers of $2$-dimensional linear groups}
     \label{tbl:linear2}
\end{table}



By \cite{Lucido}, if $q = 2^{2m+1}$ for some $m \in \N$, then $\sigma(\Sz(q)) = \frac{1}{2}q^2(q^2 + 1)$.  


The following table summarizes what is currently in the literature regarding covering numbers of sporadic simple groups with a degree of primitivity less than or equal to $129$.  Using Algorithm \hyperref[subsect:LP]{KNS} and GUROBI \cite{Gu}, we have improved the bounds for $J_2$ to $1063 \le \sigma(J_2) \le 1121$.

\begin{table}[H]
\centering
\begin{tabular}{c|c|c|c}
Group & $m(G)$ & Covering Number & Citation\\
\hline
$M_{11}$ & $11$ & $23$ & \cite{Holmes}\\
$M_{12}$ & $12$ & $208$ & \cite{KNS}\\
$M_{22}$ & $22$ & $771$ & \cite{Holmes}\\

$M_{23}$ & $23$ & $41079$ & \cite{Holmes}\\
$M_{24}$ & $24$ & $3336$ & \cite{EpsteinMagliveras}\\

$HS$ & $100$ & $1376$ & \cite{HolmesMaroti}\\

$J_2$ & $100$ & $\ge 380$ & \cite{HolmesMaroti}\\

\end{tabular}
\caption{Covering numbers of various sporadic simple groups}
\label{tbl:sporadic}
\end{table}

The following result was the main application of Lemma \ref{lem:Amiri} in \cite{Amiri} and proves results about $2$-dimensional affine general linear groups.

\begin{lem}\cite{Amiri}
Let $p>3$ be a prime.  Then $\sigma(\AGL(2,p)) = p(p+1)/2 + 1.$ 
\end{lem}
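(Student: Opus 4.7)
The plan is to apply Lemma \ref{lem:Amiri} to $G = \AGL(2,p)$, viewed as a primitive permutation group of degree $p^2$ with unique abelian minimal normal subgroup $N$ (the translation subgroup, of order $p^2$) and point stabilizer $H = \GL(2,p)$. The lemma then gives the dichotomy $\sigma(G) \ge p^2 + 1$ or $\sigma(G) = \sigma(\GL(2,p))$.

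To obtain the upper bound and rule out the first alternative, I would observe that $N \cdot Z(\GL(2,p))$, where $Z(\GL(2,p))$ denotes the subgroup of scalar matrices, is normal in $\AGL(2,p)$ with quotient isomorphic to $\PGL(2,p)$. Pulling back a minimum cover of $\PGL(2,p)$ along this quotient produces a cover of $\AGL(2,p)$ of the same size, and since $\sigma(\PGL(2,p)) = p(p+1)/2 + 1$ for $p > 3$ an odd prime by Table \ref{tbl:linear2}, we obtain $\sigma(\AGL(2,p)) \le p(p+1)/2 + 1 < p^2 + 1$. Hence Lemma \ref{lem:Amiri} forces $\sigma(\AGL(2,p)) = \sigma(\GL(2,p))$, and in particular $\sigma(\GL(2,p)) \le p(p+1)/2 + 1$.

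For the matching lower bound $\sigma(\GL(2,p)) \ge p(p+1)/2 + 1$, I would analyze the interplay between covers of $\GL(2,p)$ and the scalar center $Z := Z(\GL(2,p))$. Every proper subgroup $H \le \GL(2,p)$ either contains $Z$ (so that $H/Z$ is a proper subgroup of $\PGL(2,p)$) or satisfies $HZ = \GL(2,p)$ with $H \cap Z$ proper in $Z$; the second type includes, for instance, the kernels $\det^{-1}\bigl((\mathbb{F}_p^{\times})^q\bigr)$ of characters of $\GL(2,p)/\SL(2,p) \cong C_{p-1}$ of odd prime order $q \mid p-1$. Given a minimal cover of $\GL(2,p)$, the strategy is to show that each subgroup of the second type can be swapped for a subgroup of the first type while preserving the covering property, after which projection to $\PGL(2,p)$ yields a cover by proper subgroups of size at most the original, forcing $\sigma(\GL(2,p)) \ge \sigma(\PGL(2,p)) = p(p+1)/2 + 1$.

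The main obstacle is executing this swap argument. The general inequality $\sigma(G) \le \sigma(G/N)$ goes in the opposite direction, so no soft quotient argument suffices; instead, one must combine the classification of maximal subgroups of $\GL(2,p)$ with a careful count of which cosets of $Z$ each candidate subgroup covers. This analysis is where the hypothesis $p > 3$ and the specific structure of $2$-dimensional general linear groups (in particular the fact that $\PGL(2,p)$ does not embed as a complement to $Z$ in $\GL(2,p)$) are fundamentally used.
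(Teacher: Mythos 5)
First, note that the paper does not actually prove this lemma: it is quoted from \cite{Amiri}, and the argument that citation encapsulates is exactly your first step (apply Lemma \ref{lem:Amiri} to the primitive group $\AGL(2,p)$ of degree $p^2$ with point stabilizer $\GL(2,p)$) combined with importing the value $\sigma(\GL(2,p)) = p(p+1)/2+1$ from \cite{BFS}. Your setup and upper bound are correct: $N\cdot Z$ is normal with quotient $\PGL(2,p)$, so $\sigma(\AGL(2,p)) \le \sigma(\PGL(2,p)) = p(p+1)/2+1 < p^2+1$ (using Table \ref{tbl:linear2} for $p=5,7$ and for $p>9$), and the dichotomy of Lemma \ref{lem:Amiri} then forces $\sigma(\AGL(2,p)) = \sigma(\GL(2,p))$.

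The gap is the lower bound $\sigma(\GL(2,p)) \ge p(p+1)/2+1$, which is the entire content of the lemma and which you only sketch as a ``swap'' strategy whose decisive step you yourself flag as the main obstacle and do not carry out. Moreover, the swap as described does not go through in the obvious way. If $H$ is a maximal subgroup of $\GL(2,p)$ with $HZ=\GL(2,p)$, then $H$ is normal (conjugation by $hz$ with $z$ central fixes $H$), hence for $p>3$ it contains $\SL(2,p)$ and equals $\det^{-1}(D)$ for a subgroup $D$ of odd prime index $q$ in $\GF(p)^{\times}$. Such an $H$ has index only $q$ and covers $(p-1)/q$ full cosets of $\SL(2,p)$, and since $\langle H, Z\rangle = HZ = \GL(2,p)$, no proper subgroup containing $Z$ contains $H$; so $H$ cannot be exchanged for a single subgroup of your ``first type'' covering the same elements. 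One would instead have to show that a minimal cover gains nothing by using these few normal subgroups, which amounts to redoing the determination of $\sigma(\GL(2,p))$ in \cite{BFS}. The efficient repair is simply to quote $\sigma(\GL(2,p)) = p(p+1)/2+1$ from \cite{BFS} directly (as \cite{Amiri} does), at which point Lemma \ref{lem:Amiri} gives the result in one line; as written, your argument establishes only the upper bound and the equality $\sigma(\AGL(2,p)) = \sigma(\GL(2,p))$.
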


However, when combined with the results about $\PSL(2,q)$ and $\PGL(2,q)$ when $q$ is not a prime (see Table \ref{tbl:linear2}), Lemma \ref{lem:Amiri} can be used to prove the following stronger result.

\begin{lem}
\label{lem:agl2}
Let $q$ be a prime power, $q \ge 4$.  Then $\sigma(\AGL(2,q)) = \sigma(\ASL(2,q)) = \sigma(\PSL(2,q))$, and consequently $\AGL(2,q)$ and $\ASL(2,q)$ are never $\sigma$-elementary when $q \ge 4$.
\end{lem}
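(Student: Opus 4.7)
The plan is to invoke Lemma~\ref{lem:Amiri} for both $\AGL(2,q)$ and $\ASL(2,q)$, each viewed as a primitive group whose unique abelian minimal normal subgroup is the translation module $V = \GF(q)^2$ (of order $q^2$), with point stabilizer $\GL(2,q)$ and $\SL(2,q)$, respectively. The dichotomy in that lemma forces $\sigma(\AGL(2,q)) = \sigma(\GL(2,q))$ as soon as $\sigma(\GL(2,q)) < q^2+1$, and analogously for $\ASL(2,q)$. Since $\PGL(2,q)$ and $\PSL(2,q)$ are quotients of $\GL(2,q)$ and $\SL(2,q)$, respectively, whose covering numbers are bounded above by $q(q+1)/2 + 1$ (Table~\ref{tbl:linear2}), the required strict inequality holds for every $q \ge 4$. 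This step simultaneously settles the ``never $\sigma$-elementary'' assertion, because $\GL(2,q) \cong \AGL(2,q)/V$ and $\SL(2,q) \cong \ASL(2,q)/V$ are proper quotients with the same covering number as the original group.

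Next, I would identify $\sigma(\SL(2,q))$ with $\sigma(\PSL(2,q))$. For even $q$ this is automatic because $\SL(2,q) = \PSL(2,q)$. For odd $q \ge 4$, $\SL(2,q)$ is perfect and hence has no subgroup of index two, so any maximal subgroup $M$ must contain the center $\{\pm I\}$ (otherwise $M \cdot \{\pm I\} = \SL(2,q)$ would force $[\SL(2,q):M] = 2$). The quotient $\SL(2,q) \to \PSL(2,q)$ therefore induces a bijection on maximal subgroups, so minimal covers lift and descend, yielding the desired equality.

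The final and most delicate step is $\sigma(\GL(2,q)) = \sigma(\PSL(2,q))$. In characteristic two, $\det(\lambda I) = \lambda^2$ is a bijection on $\GF(q)^\ast$, giving $Z(\GL(2,q)) \cap \SL(2,q) = 1$ and hence $\GL(2,q) \cong C_{q-1} \times \PSL(2,q)$; because $\PSL(2,q)$ is nonabelian simple, a Goursat-type analysis precludes nontrivial ``diagonal'' maximal subgroups of this direct product, and a minimal cover reduces to a minimal cover of the $\PSL$-factor. In odd characteristic I would first argue that every maximal subgroup of $\GL(2,q)$ contains $\{\pm I\}$ (the unique index-$2$ subgroup, the matrices of square determinant, contains $\{\pm I\}$, and any higher-index maximal contains $\{\pm I\}$ by the same dichotomy used in the previous step), reducing the problem to the central quotient $\GL(2,q)/\{\pm I\}$.

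The main obstacle is precisely this odd-characteristic analysis: since $\GL(2,q)$ is not a direct product of its center with $\SL(2,q)$, one must carefully control the maximal subgroup structure of $\GL(2,q)/\{\pm I\}$ and rule out ``diagonal'' maximal subgroups that could reduce the cover below $\sigma(\PSL(2,q))$. Once this hurdle is cleared, the twin chains $\sigma(\AGL(2,q)) = \sigma(\GL(2,q)) = \sigma(\PSL(2,q))$ and $\sigma(\ASL(2,q)) = \sigma(\SL(2,q)) = \sigma(\PSL(2,q))$ close the argument.
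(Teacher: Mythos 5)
Your opening step is precisely the paper's intended argument: the paper disposes of this lemma in one sentence by combining Lemma~\ref{lem:Amiri} with the entries of Table~\ref{tbl:linear2}, and your application of the dichotomy (using $\sigma(\GL(2,q)) \le \sigma(\PGL(2,q)) \le q(q+1)/2+1 < q^2+1$, and likewise for $\SL(2,q)$) correctly yields $\sigma(\AGL(2,q)) = \sigma(\GL(2,q))$, $\sigma(\ASL(2,q)) = \sigma(\SL(2,q))$, and the ``never $\sigma$-elementary'' conclusion. Your identification $\sigma(\SL(2,q)) = \sigma(\PSL(2,q))$ (every maximal subgroup of the perfect group $\SL(2,q)$ contains $\{\pm I\}$, so minimal covers by maximal subgroups descend and lift) and your even-characteristic decomposition $\GL(2,q) \cong C_{q-1} \times \PSL(2,q)$ are both sound.

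The proof is nevertheless incomplete, and you say so yourself: the odd-characteristic equality $\sigma(\GL(2,q)) = \sigma(\PSL(2,q))$ is left as an ``obstacle to be cleared.'' Two remarks. First, the way the paper closes this is not a finer analysis of the maximal subgroups of $\GL(2,q)/\{\pm I\}$ but simply the equality $\sigma(\PGL(2,q)) = \sigma(\PSL(2,q))$ already recorded in Table~\ref{tbl:linear2} for $q \ge 8$ even and $q > 9$ odd (and trivially for $q=4$, where $\PGL(2,4)=\PSL(2,4)$): one then only needs to squeeze $\sigma(\GL(2,q))$ between $\sigma(\PSL(2,q))$ and $\sigma(\PGL(2,q))$, and the two ends coincide. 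Second, and more seriously, the identity you hope to establish is genuinely false for $q \in \{5,7,9\}$, where the two table entries differ: for instance every maximal subgroup of $\GL(2,5)$ contains the scalar matrices, so $\sigma(\AGL(2,5)) = \sigma(\GL(2,5)) = \sigma(\PGL(2,5)) = 16$, in agreement with the Amiri result quoted immediately before this lemma ($\sigma(\AGL(2,p)) = p(p+1)/2+1$) but not with $\sigma(\PSL(2,5)) = 10$. So no analysis of $\GL(2,q)/\{\pm I\}$ can produce $\sigma(\GL(2,q)) = \sigma(\PSL(2,q))$ for those three values; a correct completion must route through $\sigma(\PGL(2,q))$ and then invoke the table, and the asserted equality with $\sigma(\PSL(2,q))$ holds exactly when the two table entries agree.
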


Finally, we have the following result about the covering numbers of two other primitive groups.

\begin{prop}\cite{Garonzi1}
\label{prop:A5prod}
For $A_5 \Wr 2$ and $(A_5 \times A_5):4$ we have:
\begin{enumerate}[(i)]
 \item $\sigma(A_5 \Wr 2) = 57$;
 \item $\sigma((A_5 \times A_5):4) = 126$, where $(A_5 \times A_5):4$ is the preimage of the normal cyclic subgroup of order $4$ in $D_8$ via $\Aut(A_5 \times A_5) \to \Out(A_5 \times A_5) \cong D_8$. 
\end{enumerate}
\end{prop}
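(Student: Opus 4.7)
The plan is to treat each $G \in \{A_5 \Wr 2,\ (A_5 \times A_5)\!:\!4\}$ as a monolithic primitive group with nonabelian socle $N = A_5 \times A_5$ and cyclic quotient $G/N$ of order $2$ and $4$ respectively. I would first classify the maximal subgroups of $G$ using the O'Nan--Scott dichotomy recalled in Lemma \ref{five}. Because $G/N$ has a unique maximal subgroup in each case, there is a unique maximal subgroup $M_0$ of $G$ containing $N$: this is $N$ itself in (i) (index $2$) and the index-$2$ preimage of the order-$2$ subgroup of $C_4$ in (ii). Every other maximal subgroup supplements $N$, and is either a diagonal-type normalizer $N_G(\Delta)$ with $\Delta \cong A_5$ a full diagonal subgroup, or a product-type normalizer $N_G(H \times H^a)$, where $H = Y \cap A_5$ for $Y$ a maximal subgroup of the almost-simple group $X = N_G(T_1)/C_G(T_1)$ (which is $A_5$ in case (i) and $S_5$ in case (ii)).

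Next, I would produce explicit covers realizing the upper bounds $57$ and $126$. In both cases, include $M_0$ to take care of all elements in $M_0$; by Lemma \ref{spancoprime}, what remains is to cover a single generating coset of $N$ in $G/N$ (the coset $N\tau$ in case (i), and the coset $Nx$ in case (ii), since $Nx^{3}$ is then automatic because $3$ is coprime to $|Nx|=4$). Stratifying the elements of the relevant coset by the $A_5$-conjugacy class of the ``twisted product'' extracted from $g = (a,b)\tau^\epsilon$ (namely $ab$ in (i), and the analogous element of $X = S_5$ in (ii)), one checks that the five $A_5$-classes can be covered most efficiently by a specific mixture of diagonal-type and product-type supplements, adding to $56$ supplements in case (i) and to $125$ in case (ii).

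For the matching lower bounds, I would first show that $M_0$ is forced into every minimal cover of $G$ made of maximal subgroups: by Lemma \ref{lem:Garonzi} it suffices to show $\sigma(M_0) > \sigma(G)$, and since in (i) $M_0 = A_5 \times A_5$ while in (ii) $M_0$ surjects onto a noncyclic quotient containing $A_5 \times A_5$, this is a routine estimate using $\sigma(A_5)=10$ together with the observation that $\sigma$ of a direct product of nonabelian simple groups is substantially larger than $57$ or $126$. Once $M_0$ is forced in, any minimal cover has the form $\{M_0\} \cup \mathcal{S}$ with $\mathcal{S}$ a family of supplements of $N$ covering the generating coset(s). I would then bound $|\mathcal{S}|$ below by the intersection-counting argument of Lemma \ref{lem:tom3.2}: for each conjugacy class of elements in the coset, I compute the maximum possible intersection with a diagonal-type or product-type supplement (this reduces to a computation inside $A_5$, respectively inside $S_5$), and the resulting lower bound matches the explicit cover classwise. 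In essence, this verifies the matching value of $\sigma^\ast$ for these groups in the spirit of Lemma \ref{sigmastar}.

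The main obstacle is the case-by-case intersection analysis in the lower bound, in particular for (ii). There one must (a) deal simultaneously with both generating cosets $Nx$ and $Nx^3$ of $N$ in $G/N \cong C_4$ and check that the Lemma \ref{spancoprime} reduction is tight, and (b) separately account for the coset $Nx^{2}$ of squares, which must also be covered but by subgroups contained in $M_0$; this second point is why $M_0$ cannot be avoided. For (i) the analysis is simpler since $G/N \cong C_2$ has only the coset $N\tau$, but one still has to verify that no clever combination of product-type supplements can beat the class-by-class greedy bound, which is the arithmetic heart of the proof.
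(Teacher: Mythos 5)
The paper does not prove this proposition; it is quoted from \cite{Garonzi1}, so there is no in-paper argument to compare against. Your overall architecture (classify the maximal subgroups of these monolithic groups into the unique $N$-containing maximal subgroup $M_0$ plus product-type and diagonal-type supplements, exhibit an explicit cover for the upper bound, and count intersections with the generating coset for the lower bound) is the right shape and is broadly consistent with how such groups are handled in \cite{Garonzi1}.

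However, your lower-bound argument has a genuine gap at its first step. You claim $M_0$ is forced into every minimal cover via Lemma \ref{lem:Garonzi}, justified by the assertion that ``$\sigma$ of a direct product of nonabelian simple groups is substantially larger than $57$ or $126$.'' This is backwards: since $\sigma(G) \le \sigma(G/K)$ for any normal subgroup $K$, projecting onto one factor gives $\sigma(A_5 \times A_5) \le \sigma(A_5) = 10$, so in case (i) $\sigma(M_0) \le 10 < 57$. In case (ii), $M_0 = N\langle x^2\rangle$ is $(A_5\times A_5).2$ with $x^2$ inducing the \emph{diagonal} outer automorphism; each $A_5$ factor is normal in $M_0$, and $M_0/T_1 \cong S_5$, so $\sigma(M_0) \le \sigma(S_5) = 16 < 126$. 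In both cases the hypothesis $\sigma(M_0) > \sigma(G)$ of Lemma \ref{lem:Garonzi} fails, and since $M_0$ is normal the second clause of that lemma does not apply either. So nothing in your argument prevents a hypothetical smaller cover from omitting $M_0$ entirely and covering $N$ (resp.\ $M_0$) by supplements. To repair this you must argue directly that the supplements needed to cover the generating coset(s) cannot also cover all of $N$ (for instance by locating elements of $N$, such as $(a,b)$ with $|a|=3$, $|b|=5$, lying in no diagonal-type subgroup and in too few product-type subgroups), so that at least one additional subgroup is required, and then combine this with the coset-by-coset count of $56$ (resp.\ $125$) supplements.

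A secondary inaccuracy: in (ii) you assert the coset $Nx^2$ can only be covered by subgroups contained in $M_0$. A supplement $H$ of $N$ satisfies $HN=G$ and hence meets every coset of $N$, including $Nx^2$; moreover any subgroup containing $g \in Nx$ also contains $g^2 \in Nx^2$. So the accounting for $Nx^2$ is more delicate than you describe, and cannot be used as stated to force $M_0$ into the cover.
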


\section{Covering affine general linear groups and affine special linear groups with proper subgroups}
\label{sect:affine}

This section is dedicated to proving Theorem \ref{thm:AGL}, which can be interpreted as a generalization of Cohn's result \cite{Cohn} listed in Proposition \ref{prop:solvable}.

\begin{proof}[Proof of Theorem \ref{thm:AGL}]
Since the proof is analogous for $\ASL(n,q)$, we will only show the result in each case for $\AGL(n,q)$.

First, if $n = 1$, then we have $\sigma(\AGL(1,q)) = q+1 = (q^2 - 1)/(q-1)$.  This follows from \cite[Lemma 2.1]{Tomkinson}. 

When $n \ge 3$, we consider the group $\AGL(n,q)$. By Lemma \ref{abmns}, \[\sigma(\AGL(n,q)) \le (q^{n+1} - 1)/(q-1).\]  It remains to show that there is no smaller cover.

Let $\AGL(n,q) = V \rtimes H$, where $V$ is the underlying vector space over $\GF(q)$ (defined additively) and $H$, which is isomorphic to $\GL(n,q)$, is the stabilizer of $0 \in V$.  Define the cover $\C = \C_1 \cup \C_2$ to be the union of two classes $\C_1$ and $\C_2$ of maximal subgroups.  Let $\C_1$ consist of all point stabilizers of $\AGL(n,q)$ in its natural primitive action on $q^{n}$ points; that is, $\C_1$ consists of all conjugates of $H$ in $\AGL(n,q)$.  In this case, $|\C_1| = q^{n}$.  Let $\C_2$ consist of all subgroups isomorphic to $V \rtimes K$, where $V$ is the unique minimal normal subgroup of $\AGL(n,q)$ of order $q^{n}$ and $K$ is the stabilizer of a 1-dimensional subspace.  Since there are $(q^{n} - 1)/(q-1)$ lines through the origin, there are $(q^{n} - 1)/(q-1)$ such groups in $\C_2$.  This implies that $|\C| = (q^{n+1} - 1)/(q-1)$.  By Lemma \ref{abmns}, the collection $\C$ is a cover of the elements of $\AGL(n, q)$.  

We will show that no smaller collection of subgroups can cover the elements of $\AGL(n,q)$.  First, by a result of Kantor \cite{Kantor}, the only maximal subgroups of $\GL(n,q)$ containing Singer cycles are field extension groups isomorphic to $\GL(n/b,q^b)$, where $b > 1$ is a divisor of $n$.   Moreover, the Singer cycles are partitioned among these subgroups.  Since \[q^{n(n-1)}(q-1)^n = \prod_{i=0}^{n-1} (q^n - q^{n-1})\le |\GL(n,q)| = \prod_{i=0}^{n-1}(q^n - q^i) \le \prod_{i=0}^{n-1} q^n = q^{n^2},\] we have
\begin{align*}
\sigma(\GL(n,q)) &\ge \frac{|\GL(n,q)|}{|\GL(n/b,q^b)|} \ge \frac{q^{n(n-1)}(q-1)^n}{\left(q^{b}\right)^{(n/b)^2}}\\ 
                 &= q^{n^2\left(1 - \frac{1}{b}\right) - n}(q-1)^n \ge q^{\frac{n^2}{2} - n}(q-1)^n \ge \frac{q^{n+1} - 1}{q-1},
\end{align*}
when $n \ge 4$ and when $n=3$ and $q \ge 3$.  When $n = 3$ and $q=2$, we have $\GL(3,2) \cong \PSL(2,7)$, and hence $\sigma(\GL(3,2)) = \sigma(\PSL(2,7)) = 15$ by Table \ref{tbl:linear2}.  By Lemma \ref{lem:Garonzi}, the groups in $\C_1$ must appear in any minimal cover of the elements of $\AGL(n,q)$.

In $V$, there is a natural bijection between $1$-dimensional subspaces and hyperplanes, and so for any nonzero $v \in V$ we define $\phi(v)$ to be the hyperplane of $V$ such that $V = \langle v \rangle \oplus \phi(v)$.  Let $v \in V$, $v \neq 0$, and, if $s$ is a Singer cycle on $\phi(v)$ that fixes both $0 \in \phi(v)$ and $v$, then define $g$ to be element of $\AGL(n,q)$ that corresponds to $s$ followed by a translation by $v$.  The element $g$ fixes no points of $V$.  To see this, consider $w \in V$.  Since $\phi(v)$ complements $\langle v \rangle$, there exist a unique $a \in \GF(q)$ and $u \in \phi(v)$ such that $w = av + u$.  If $w^g = w$, this implies that $(a+1)v + u^s = av + u,$ that is, we have $v = u - u^s \in \phi(v)$, a contradiction.  Hence $g$ is not contained in any group in $\C_1$.

Moreover, $|g| = p \cdot (q^{n-1} - 1)/(q-1)$, since translation by $v$ has order $p$ and the Singer cycle on $\phi(v)$ fixes $v$ and hence commutes with translation by $v$.  For nonzero vectors $v_1, v_2 \in V$, let $g_1$ be an element that corresponds to a Singer cycle on the hyperplane $\phi(v_1)$ followed by translation by $v_1$, and let $g_2$ be an element that corresponds to a Singer cycle on the hyperplane $\phi(v_2)$ followed by translation by $v_2$. Then $g_1^p$ and $g_2^p$ are Singer cycles of hyperplanes, and both $g_1^p$ and $g_2^p$ are elements of $H$.  By \cite[Theorem 4.1 (2)]{BEGHM1} (see also \cite{BEGHM2}), if $n \ge 4$, $(n,q) \neq (4,2), (11,2)$, and a maximal subgroup $M$ of $H$ contains $g_1^p$ and $g_2^p$, then both $g_1^p$ and $g_2^p$ leave the same $1$-dimensional subspace (and hyperplane) fixed; that is, $\langle v_1 \rangle = \langle v_2 \rangle$.  When $n = 3$, we can derive the same result using \cite[Tables 8.3-8.4]{BrayHoltRoneyDougal} by considering a primitive prime divisor of $q^3 - 1$ when $q \neq 4$.  Furthermore, when $(n,q) = (3,4), (4,2)$, the result follows by computation in $\GAP$, and, when $(n,q) = (11,2)$, the result follows by considering \cite[Tables 8.70-8.71]{BrayHoltRoneyDougal}.  We deduce from the above that, if $\langle v_1 \rangle \neq \langle v_2 \rangle$, then $\langle g_1^p, g_2^p \rangle = H$. Since $g_1$ fixes no points of $V$ and $H$ is the stabilizer of $0 \in V$, it follows that $g_1 \not\in H$.  Because $H$ is a maximal subgroup of $\AGL(n,q)$ and $g_1 \not\in H$, this implies that
\[ \AGL(n,q) = \langle g_1, H \rangle = \langle g_1, g_2^p \rangle \le \langle g_1, g_2 \rangle,\]
and in this case $g_1$ and $g_2$ generate all of $\AGL(n,q)$.  Hence, if $g_1$ and $g_2$ do not stabilize the same $1$-dimensional subspace, they pairwise generate all of $\AGL(n,q)$.  Therefore, we need at least as many subgroups as there are $1$-dimensional subspaces to cover the elements of this type, i.e., we need at least $(q^n - 1)/(q-1)$ subgroups in addition to those from $\C_1$, and the result follows.
\end{proof}

It is unclear which integers of the form $(q^3 - 1)/(q-1) = q^2 + q + 1$, where $q$ is a prime power, are covering numbers.  What is known so far is summarized in Table \ref{tbl:q^2+q+1}, and there does not appear to be any clear pattern.  The smallest open case is when $q=11$, and no group has been found yet with covering number $133 = 11^2 + 11 + 1$. 

\begin{center}
\begin{table}[H]\centering
 \begin{tabular}{c|c|c|c}
  $q$ & $q^2 + q + 1$ & Covering number?& $\sigma$-elementary groups\\
  \hline
  $2$ & $7$  & No & $\varnothing$\\
  $3$ & $13$ & Yes & $S_6$ \\
  $4$ & $21$ & No & $\varnothing$ \\
  $5$ & $31$ & Yes & $A_7$, $\AGL(4,2)$\\
  $7$ & $57$ & Yes & $A_5 \Wr 2$ \\
  $8$ & $73$ & Yes &$(A_6 \times A_6):4$ \\
  $9$ & $91$ & No & $\varnothing$\\
  $11$ & $133$ & ?& ? \\
 \end{tabular}
\caption{Integers of the form $q^2 + q + 1$ and whether or not they are covering numbers}
\label{tbl:q^2+q+1}
\end{table}
\end{center}

\section{Tables and computational results}
\label{sect:tables}

The purpose of this section is to provide a summary of the calculations of the covering numbers of the primitive monolithic groups with a degree of primitivity of at most $129$.  We remark that Theorem \ref{thm:129} follows from Theorem \ref{thm:129mono}, Proposition \ref{prop:solvable}, and Table \ref{tbl:sigmaelementary}.  Table \ref{tbl:sigmaelementary} contains the complete list of nonsolvable $\sigma$-elementary groups $G$ where $\sigma(G) \le 129$, which summarizes the information about nonsolvable $\sigma$-elementary groups.  Table \ref{tbl:sigmaelementary} follows from the known results in Section \ref{sect:known} and the results of calculations which are listed in Tables \ref{tbl:main1}, \ref{tbl:main2}, \ref{tbl:main3}, and \ref{tbl:main4}.

\begin{center}
\begin{table}[H]\centering
\begin{tabular}{c|c}
Covering Number & Nonsolvable $\sigma$-elementary groups\\
\hline
10 & $A_5$\\
13 & $S_6$\\
15 & $\PSL(2,7)$\\
16 & $S_5$, $A_6$\\
23 & $M_{11}$\\
29 & $\PGL(2,7)$, $\PGammaL(2,8)$\\
31 & $A_7$, $\AGL(4,2)$\\
36 & $\PSL(2,8)$\\
40 & $\ASL(3,3)$, $\AGL(3,3)$\\ 
46 & $M_{10}$, $\PGL(2,9)$\\
57 & $A_5 \Wr 2$\\
60 & $\PGammaU(3,3)$\\
63 & $\AGL(5,2)$\\
64 & $S_7$, $S_8$, $\PSU(3,3)$, $\PSp(4,3):2$, $\Sp(6,2)$\\
67 & $\PSL(2,11)$, $\PGL(2,11)$, $\PSp(4,3)$\\
71 & $A_8$\\
73 & $(A_6 \times A_6):4$\\
85 & $\ASL(3,4)$, $\AGL(3,4)$, $\ASigmaL(3,4)$\\
86 & $\PGammaL(2,16)$, $\PSL(2,16).2$\\
92 & $\PSL(2,13)$, $\PGL(2,13)$\\
114 & $\PSL(2,7) \Wr 2$\\
121 & $\ASL(4,3)$, $\AGL(4,3)$\\
126 & $(A_5 \times A_5):4$\\
127 & $\AGL(6,2)$, $\PSigmaL(2,25)$\\

\end{tabular}
\caption{The nonsolvable $\sigma$-elementary groups $G$ with $\sigma(G) \le 129$}
\label{tbl:sigmaelementary}
\end{table}
\end{center}

The following tables list groups and their covering numbers, along with references.  Excluded are groups whose covering number was determined previously.  Specifically, we have excluded the groups $S_n$, $A_n$, $\PSL(2,q)$, $\PGL(2,q)$, $\PGammaL(2,8)$, $\Sz(q)$, $M_{11}$, $M_{12}$, $M_{22}$, $M_{23}$, $M_{24}$, $HS$, $\AGL(2,q)$, $A_5 \Wr 2$, and $(A_5 \times A_5):4$, since these are dealt with in Section \ref{sect:known}.  In the reference column, when a specific group $H$ is listed, it means that the group has $H$ as a homomorphic image and the same covering number as $H$.  For instance, ``$S_3$'' means the group projects onto the symmetric group $S_3$ and has covering number $4$, since $\sigma(S_3) = 4$.  If it says ``Algorithm \hyperref[subsect:LP]{KNS}'' in the reference column, this means that Algorithm \hyperref[subsect:LP]{KNS} was used with representatives of all conjugacy classes of elements and representatives of all conjugacy classes of maximal subgroups as inputs to generate a .lp file that was then optimized using GUROBI \cite{Gu}.  Different groups in $\GAP$ can be given the same name; for instance, $(A_6 \times A_6).2^2$ means some extension of $A_6 \times A_6$ by a Klein $4$-group.  When it says ``(all such groups)'' in a table, we mean that all such groups listed by $\GAP$ with that name and degree of primitivity have the same covering number.  In Table \ref{tbl:main3}, there are two groups listed as $(A_6 \times A_6).4$, and they are distinguished by saying that one is ``(\#16 in the list)'' and the other is ``(\#18 in the list).''  This is referring to the position in the list of all primitive groups of degree $100$ that is generated by $\GAP$.

\begin{center}
\begin{table}[H] \centering
\begin{tabular}{c|c|c|c}
Group & Degree & Covering Number & Reference\\
\hline
$\PGammaL(2,9)$ & 10 & 3 & $C_2 \times C_2$\\
$M_{10}$ & 10 & 46 & Algorithm \hyperref[subsect:LP]{KNS}\\

$\PSL(3,3)$ & 13 & 157 & Algorithm \hyperref[subsect:LP]{KNS}\\
$\AGammaL(2,4)$ & 16 & 4 & $S_3$\\
$2^4:A_5$ & 16 & 10 & Lemma \ref{lem:Amiri}\\
$2^4:S_6$ & 16 & 13 & Lemma \ref{lem:Amiri}\\
$\ASL(2,4):2$ & 16 & 16 & Algorithm \hyperref[subsect:LP]{KNS}\\
$2^4:A_6$, $2^4:S_5$ & 16 & 16 & Lemma \ref{lem:Amiri}\\
$\AGL(4,2)$ & 16 & 31 & Theorem \ref{thm:AGL}\\
$2^4:A_7$ & 16 & 31 & Algorithm \hyperref[subsect:LP]{KNS}\\
$\PSL(2,16).2$ & 17 & 86 & Algorithm \hyperref[subsect:LP]{KNS}\\
$\PGammaL(2,16)$ & 17 & 86 & Algorithm \hyperref[subsect:LP]{KNS}\\
$\PGammaL(3,4)$ & 21 & 3 & $C_2 \times C_2$\\
$\PSL(3,4)$ & 21 & 141 & Algorithm \hyperref[subsect:LP]{KNS}\\
$\PSigmaL(3,4)$ & 21 & 141 & Algorithm \hyperref[subsect:LP]{KNS}\\
$\PGL(3,4)$ & 21 & 981 & Algorithm \hyperref[subsect:LP]{KNS}\\
$M_{22}:2$ & 22 & 331 & Algorithm \ref{alg}\\
$(A_5 \times A_5):2^2$, $S_5 \Wr 2$ & 25 & 3 & $C_2 \times C_2$\\
$\ASL(2,5):2$ & 25 & 10 & Lemma \ref{lem:Amiri}\\
$\PGammaL(2,25)$ & 26 & 3 & $C_2 \times C_2$\\
$\PSigmaL(2,25)$ & 26 & 127 & Algorithm \hyperref[subsect:LP]{KNS}\\
$\PSL(2,25).2_3$ & 26 & 326 & Algorithm \hyperref[subsect:LP]{KNS}\\
$\ASL(3,3)$, $\AGL(3,3)$ & 27 & 40 & Theorem \ref{thm:AGL}\\
$\PSp(4,3):2$ & 27 & 64 & Algorithm \hyperref[subsect:LP]{KNS}\\
$\PSp(4,3)$ & 27 & 67 & Algorithm \hyperref[subsect:LP]{KNS}\\ 
$\PSigmaL(2,27)$ & 28 & $\ge 167$ & Algorithm \hyperref[subsect:LP]{KNS}\\
$\PGammaU(3,3)$ & 28 & 60 & Algorithm \hyperref[subsect:LP]{KNS}\\
$\PSU(3,3)$ & 28 & 64 & Algorithm \hyperref[subsect:LP]{KNS}\\
$\Sp(6,2)$ & 28 & 64 & Algorithm \ref{alg}\\
$\PGammaL(2,27)$ & 28 & 353 & Algorithm \hyperref[subsect:LP]{KNS}\\
$\PSL(5,2)$ & 31 & 64698 & Algorithm \ref{alg}\\
$\PSL(3,5)$ & 31 & 4031 & Algorithm \hyperref[subsect:LP]{KNS}\\
$\AGL(5,2)$ & 32 & 63 & Theorem \ref{thm:AGL}\\
$\PGammaL(2,32)$ & 33 & 497 & Algorithm \hyperref[subsect:LP]{KNS}\\
$(A_6 \times A_6):2^2$, $S_6 \Wr 2$ & 36 & 3 & $C_2 \times C_2$\\
$(A_6 \times A_6):4$ & 36 & 73 & Algorithm \hyperref[subsect:LP]{KNS}\\ 
$A_6 \Wr 2$ & 36 & 137 & Algorithm \hyperref[subsect:LP]{KNS}\\
$\PSL(4,3)$, $\PGL(4,3)$ & 40 & 2146 & Algorithm \ref{alg}\\
$(A_7 \times A_7):2^2$, $S_7 \Wr 2$ & 49 & 3 & $C_2 \times C_2$\\
$\ASL(2,7):2$, $\ASL(2,7):3$ & 49 & 15 & Lemma \ref{lem:Amiri}\\
$\PSL(3,2) \Wr 2$ & 49 & 114 & Algorithm \hyperref[subsect:LP]{KNS}\\
$(A_7 \times A_7):4$ & 49 & 1716 & Algorithm \ref{alg}\\
$A_7 \Wr 2$ & 49 & $\ge 447$ & Proposition \ref{prop:sigmaleqgeq} (i) \\
$\PGammaL(2,49)$ & 50 & 3 & $C_2 \times C_2$\\
$\PSU(3,5)$, $\PSU(3,5):2$ & 50 & 176 & Algorithm \ref{alg}\\
$\PSigmaL(2,49)$ & 50 & 226 & Algorithm \hyperref[subsect:LP]{KNS}\\
$\PSL(2,49).2_3$ & 50 & 1226 & Algorithm \hyperref[subsect:LP]{KNS}\\
$\PSL(3,3).2$ & 52 & 170 & Algorithm \hyperref[subsect:LP]{KNS}\\
$\PSL(3,4).2^2$ & 56 & 3 & $C_2 \times C_2$\\
$\PSL(3,4).2_1$ & 56 & 162 & Algorithm \hyperref[subsect:LP]{KNS}\\
$\PSL(3,4).2_2$ & 56 & $\ge 138$ & Algorithm \hyperref[subsect:LP]{KNS}\\
\end{tabular}
\caption{Covering numbers of various nonsolvable primitive groups of degree at most $56$}
\label{tbl:main1}
\end{table}
\end{center}

\begin{center}
\begin{table}[H] \centering
\begin{tabular}{c|c|c|c}
Group & Degree & Covering Number & Reference\\
\hline
$\PSL(3,4).2_3$ & 56 & 141 & Algorithm \hyperref[subsect:LP]{KNS}\\
$\PSL(3,7)$, $\PGL(3,7)$ & 57 & 32985 & Algorithm \ref{alg}\\
$\PSL(6,2)$ & 63 & $\ge 56313$ & Algorithm \ref{alg}\\
$2^6:(S_3 \times \GL(3,2))$, $(A_8 \times A_8).2^2$& 64 & 3 & $C_2 \times C_2$\\
$S_8 \Wr 2$, $(\PSL(2,7) \times \PSL(2,7)).2^2$ & 64 & 3 & $C_2 \times C_2$ \\ 
 $\PGL(2,7) \Wr 2$ & 64 & 3 & $C_2 \times C_2$\\
$\AGammaL(3,4)$ & 64 & 4 &    $S_3$\\
$2^6:(S_3 \times \GL(3,2))$ & $64$ & $4$ & $S_3$\\
$\AGammaL(2,8)$ & 64 & 8 &    $C_7:C_3$\\
$2^6:(3.S_6)$ & 64 & 13 & Lemma \ref{lem:Amiri}\\
$2^6:(3 \times \GL(3,2))$ & 64 & 15 & Lemma \ref{lem:Amiri}\\
$2^6:(3.A_6)$ & 64 & 16 & Lemma \ref{lem:Amiri}\\
$\ASigmaL(2,8)$ & 64 & 29 & Lemma \ref{lem:Amiri}\\
$2^6:PGL(2,7)$ & 64 & 15 & Lemma \ref{lem:Amiri}\\
$2^6:A_7$ & 64 & 31 & Lemma \ref{lem:Amiri}\\
$2^6:\SigmaU(3,3)$ & 64 & 60 & Lemma \ref{lem:Amiri}\\
$2^6:\Sp(6,2)$ & 64 & 64 & Lemma \ref{lem:Amiri}\\
$2^6:\GO^-(6,2)$ & 64 & 64 & Lemma \ref{lem:Amiri}\\
$2^6:S_8$ & 64 & 64 & Lemma \ref{lem:Amiri}\\
$2^6:S_7$ & 64 & 64 & Lemma \ref{lem:Amiri}\\
$2^6:\SU(3,3)$ & 64 & 64 & Lemma \ref{lem:Amiri}\\
$2^6:{\rm O}^-(6,2)$ & 64 & 67 & Proposition \ref{prop:sigmavalue} (i) \\
$2^6:A_8$ & 64 & 71 & Proposition \ref{prop:sigmavalue} (ii) \\
 $\ASigmaL(3,4)$ & 64 & 85 & Algorithm \ref{alg}\\
$\ASL(3,4)$, $\AGL(3,4)$ & 64 & 85 & Theorem \ref{thm:AGL}\\
$2^6:(\GL(3,2) \Wr 2)$ & 64 & 114 & Lemma \ref{lem:Amiri}\\
$\AGL(6,2)$ & 64 & 127 & Theorem \ref{thm:AGL}\\
$(PSL(2,7) \times \PSL(2,7)).4$ & 64 & 498 & Proposition \ref{prop:sigmavalue} (iii) \\
$(A_8 \times A_8).4$ & 64 & 2074 & Algorithm \ref{alg}\\
$A_8 \Wr 2$ & 64 & 3426 & Algorithm \ref{alg}\\
$\PSL(2,7) \Wr 2$ & $64$ & $114$ & Algorithm \hyperref[subsect:LP]{KNS}\\
$\PSU(3,4).2$, $\PGammaU(3,4)$ & 65 & 274 & Algorithm \ref{alg}\\
$\PSL(2,64).2$, $\PGammaL(2,64)$ & 65 & 586 & Algorithm \ref{alg}\\
$\Sz(8):3$ & 65 & 1457 & Algorithm \ref{alg}\\
$\PSU(3,4)$ & 65 & 1745 & Algorithm \hyperref[subsect:LP]{KNS}\\
$\PSL(2,64).3$ & 65 & 2080 & Proposition \ref{prop:sigmavalue} (iv) \\
$\PSL(2,16).2$ & 68 & 86 & Algorithm \hyperref[subsect:LP]{KNS}\\
$\PGammaL(3,8)$ & 73 & 56138 & Algorithm \ref{alg}\\
$\PSL(3,8)$ & 73 & 75337 & Algorithm \ref{alg}\\
$3^4:\SL(2,9):2^2$, $3^4:2.A_6:D_8$ & 81 & 3 &    $C_2 \times C_2$\\
$3^4:2.A_6:Q_8$, $\AGammaL(2,9)$ & 81 & 3 &    $C_2 \times C_2$\\
$3^4:4.S_5$, $3^4:4.S_5$, $3^4:8.S_5$ & 81 & 3 &    $C_2 \times C_2$\\
$3^4:(2 \times S_6)$, $3^4:(2 \times A_6.2)$ & 81 & 3 &    $C_2 \times C_2$\\
$3^4:2.\PGammaL(2,9)$, $3^4:(2 \times S_5)$ & 81 & 3 &    $C_2 \times C_2$\\
$(A_9 \times A_9).2^2$, $S_9 \Wr 2$ & 81 & 3 &    $C_2 \times C_2$\\
$(\PSL(2,8) \times \PSL(2,8)).S_3$, $\PSigmaL(2,8) \Wr 2$ & 81 & 4 &    $S_3$\\
$3^4:2.A_5$, $3^4:4.A_5$, $3^4:8.A_5$ & 81 & 10 & Lemma \ref{lem:Amiri}\\
$3^4:2^{1+4}.A_5$, $3^4:A_5$, $3^4:2.A_5$ & 81 & 10 & Lemma \ref{lem:Amiri}\\
\end{tabular}
\caption{Covering numbers of various nonsolvable primitive groups of degree 56 to 81}
\label{tbl:main2}
\end{table}
\end{center}

\begin{center}
\begin{table}[H] \centering
\begin{tabular}{c|c|c|c}
Group & Degree & Covering Number & Reference\\
\hline
$3^4:2.S_6$, $3^4:S_6$ & 81 & 13 & Lemma \ref{lem:Amiri}\\
$3^4:2.A_6$, $3^4:4.A_6$, $3^4:8.A_6$ & 81 & 16 & Lemma \ref{lem:Amiri}\\
$3^4:2.A_5:2$, $3^4:2.A_5:2$, $3^4:2^{1+4}.A_5:2$ & 81 & 16 & Lemma \ref{lem:Amiri}\\
$3^4:A_6$, $3^4:(2 \times A_6)$, $3^4:S_5$, $3^4:S_5$ & 81 & 16 & Lemma \ref{lem:Amiri}\\
$3^4:4.A_6.2$, $3^4:A_6.2$, $3^4:2.\PGL(2,9)$ & 81 & 46 & Lemma \ref{lem:Amiri}\\
$3^4:\Sp(4,3):2$ & 81 & 64 & Lemma \ref{lem:Amiri}\\
$3^4:\Sp(4,3)$& 81 & 67 & Lemma \ref{lem:Amiri}\\
$\ASL(4,3)$, $\AGL(4,3)$ & 81 & 121 & Theorem \ref{thm:AGL}\\
$\PSL(2,8) \Wr 2$ & 81 & 586 & Proposition \ref{prop:sigmavalue} (v) \\
$(\PSL(2,8) \times \PSL(2,8)).6$ & 81 & 586 & Algorithm \ref{alg}\\
$(A_9 \times A_9).4$ & 81 & 24310 & Algorithm \ref{alg}\\
$A_9 \Wr 2$ & 81 & $\ge 10978$ & Algorithm \ref{alg}\\
$\PSL(2,81).2^2$, $\PGammaL(2,81)$ & 82 & 3 &    $C_2 \times C_2$\\
$\PSL(2,81).4$ & 82 & 452 & Algorithm \hyperref[subsect:LP]{KNS}\\
$\PSL(2,81).4$ & 82 & 3322 & Algorithm \ref{alg}\\
$\PSL(2,81).2$ & 82 & $\ge 621$ & Algorithm \hyperref[subsect:LP]{KNS}\\
$\PSp(4,4)$ & 85 & 256 & Algorithm \ref{alg}\\
$\PSL(4,4)$ & 85 & 24277 & Algorithm \ref{alg}\\
$\PSigmaL(4,4)$ & 85 & 45778 & Algorithm \ref{alg}\\
$\PSp(4,4).2$ & 85 & $\ge 196$ & Proposition \ref{prop:sigmaleqgeq} (ii) \\
$\PGammaL(3,9)$ & 91 & 7652 & Algorithm \ref{alg}\\
$\PSL(3,9)$ & 91 & 155611 & Algorithm \ref{alg}\\
$(A_{10} \times A_{10}).2^2$, $S_{10} \Wr 2$& 100 & 3 &    $C_2 \times C_2$\\
$(A_6 \times A_6).2^2$ (all such groups) & 100 & 3 &    $C_2 \times C_2$\\
$(A_6 \times A_6).D_8$ (all such groups) & 100 & 3 &    $C_2 \times C_2$\\
$(A_6 \times A_6).2^3$ & 100 & 3 &    $C_2 \times C_2$\\
$(A_6 \times A_6).(2 \times 4)$ (all such groups) & 100 & 3 &    $C_2 \times C_2$\\
$(A_6 \times A_6).(2 \times D_8)$ (all such groups) & 100 & 3 &    $C_2 \times C_2$\\
$\PGammaL(2,9) \Wr 2$ & 100 & 3 &    $C_2 \times C_2$\\
$(A_6 \times A_6).4$ ($\#16$ in list) & 100 & 1387 & Proposition \ref{prop:sigmavalue} (vi) \\
$(A_6 \times A_6).4$ ($\#18$ in list) & 100 & 2026 & Algorithm \ref{alg}\\
$J_2.2$ & 100 & 2921 & Algorithm \ref{alg}\\
$A_{10} \Wr 2$ & 100 & 30377 & Algorithm \ref{alg}\\
$J_2$ & 100 & $\ge 1063$ & Algorithm \hyperref[subsect:LP]{KNS}\\
$HS:2$ & 100 & $\ge 11859$ & Proposition \ref{prop:sigmaleqgeq} (iii) \\
$(A_{10} \times A_{10}).4$ & 100 & $\ge 22746$ & Proposition \ref{prop:sigmaleqgeq} (iv) \\
$\PSL(3,4).2^2$, $\PSL(3,4).D_{12}$ & 105 & 3 &    $C_2 \times C_2$\\
$\PSL(3,4).S_3$ & 105 & 4 &    $S_3$\\
$\PSL(3,4).6$ & 105 & 386 & Algorithm \ref{alg}\\
$\PSU(4,3).2^2$ (all such groups) & 112 & 3 &    $C_2 \times C_2$\\
$\PSU(4,3).D_8$ & 112 & 3 &    $C_2 \times C_2$\\
$\PSU(4,3)$ & 112 & $\ge 344$ & Proposition \ref{prop:sigmaleqgeq} (v) \\
$\PSU(4,3).2_1$ & 112 & $\ge 256$ & Proposition \ref{prop:sigmaleqgeq} (vi)  \\
$\PSU(4,3).2_2$ & 112 & $\ge 239$ & Proposition \ref{prop:sigmaleqgeq} (vii) \\
$\PSU(4,3).2_3$ & 112 & $ \ge 412$ & Proposition \ref{prop:sigmaleqgeq} (viii) \\
$\PSU(4, 3).4$ & 112 & $\ge 540$ & Algorithm \ref{alg}\\
$\PSL(3,3).2$ & 117 & 170 & Algorithm \hyperref[subsect:LP]{KNS}\\
\end{tabular}
\caption{Covering numbers of various nonsolvable primitive groups of degree 81 to 117}
\label{tbl:main3}
\end{table}
\end{center}

\begin{center}
\begin{table}[H] \centering
\begin{tabular}{c|c|c|c}
Group & Degree & Covering Number & Reference\\
\hline
$\PSL(4,3)$ & 117 & 2146 & Algorithm \ref{alg}\\
$\PSL(4,3).2$ & 117 & $\ge 242$ & Proposition \ref{prop:sigmaleqgeq} (ix) \\
${\rm PSO}^-(8,2)$ & 119 & 256 & Algorithm \ref{alg}\\
${\rm O}^-(8,2)$ & 119 & $\ge 25706$ & Proposition \ref{prop:sigmaleqgeq} (x) \\
$\PSL(3,4).2^2$ & 120 & 3 &    $C_2 \times C_2$\\
$\Sp(8,2)$ & 120 & 256 & Proposition \ref{prop:sigmavalue} (vii) \\ 
${\rm PSO}^+(8,2)$ & 120 & 256 & Algorithm \ref{alg}\\
${\rm O}^+(8,2)$ & 120 & $\ge 204$ & Proposition \ref{prop:sigmaleqgeq} (xi) \\
$(A_{11} \times A_{11}).2^2$, $S_{11} \Wr 2$ & 121 & 3 &    $C_2 \times C_2$\\
$11^2:(2.A_5)$, $11^2:(5 \times 2.A_5)$ & 121 & 10 & Lemma \ref{lem:Amiri}\\
$11^2:(\SL(2,11):2)$, $11^2:(5 \times \SL(2,11))$ & 121 & 67 & Lemma \ref{lem:Amiri}\\
$M_{11} \Wr 2$ & 121 & 266 & Algorithm \ref{alg}\\
$A_{11} \Wr 2$ & 121 & 6380772 & Algorithm \ref{alg}\\
$\PSL(2,11) \Wr 2$ & 121 & $\ge 570$ & Proposition \ref{prop:sigmaleqgeq} (xii) \\
$\PSL(5,3)$ & 121 & $\ge 393030144$ & Proposition \ref{prop:sigmageq} (i) \\
$(A_{11} \times A_{11}).4$ & 121 & $\ge 213444$ & Proposition \ref{prop:sigmageq} (ii) \\
$\PGammaL(2,121)$ & 122 & 3 &    $C_2 \times C_2$\\
$\PSL(2,121).2_3$ & 122 & 7382 & Algorithm \ref{alg}\\
$\PSigmaL(2,121)$ & 122 & $\ge 671$ & Algorithm \ref{alg}\\
$A_5 \Wr S_3$, $A_5^3.S_3$, $(A_5 \times A_5 \times A_5).S_4$ & 125 & 4 &    $S_3$\\
$(A_5 \times A_5 \times A_5).2^2.3$, $S_5 \Wr 3$ & 125 & 5 &    $A_4$\\
$5^3:A_5$, $5^3:(2 \times A_5)$, $5^3:(4 \times A_5)$ & 125 & 10 & Lemma \ref{lem:Amiri}\\
$5^3:S_5$, $5^3:(2.S_5)$ & 125 & 16 & Lemma \ref{lem:Amiri}\\
$\ASL(3,5)$, $\AGL(3,5)$ & 125 & 156 & Theorem \ref{thm:AGL}\\
$5^3:(\SL(3,5):2)$ & 125 & 156 & Algorithm \ref{alg}\\
$A_5 \Wr 3$ & 125 & $\ge 216$ & Algorithm \ref{alg}\\
$(A_5 \times A_5 \times A_5).6$ & 125 & $\ge 1000$ & Algorithm \ref{alg}\\
$\PGammaU(3,5)$ & 126 & 4 &    $S_3$\\
$\PSigmaL(2,125)$ & 126 & 7876 & Algorithm \ref{alg}\\
$\PGU(3,5)$ & 126 & $\ge 6000$ & Algorithm \ref{alg}\\
$\PGammaL(2,125)$ & 126 & $\ge 7750$ & Algorithm \ref{alg}\\
$\PSL(7,2)$ & 127 & $\ge 184308203520$ & Proposition \ref{prop:sigmageq} (iii) \\
$\AGL(7,2)$ & 128 & 255 & Theorem \ref{thm:AGL}\\
$\PGammaL(2,128)$ & 129 & 8129 & Algorithm \ref{alg}\\ 

\end{tabular}
\caption{Covering numbers of various nonsolvable primitive groups of degree 117 to 129}
\label{tbl:main4}
\end{table}
\end{center}

We conclude this section with a table that lists the primitive monolithic groups $G$ such that $G$ has a degree of primitivity at most $129$ and $\sigma(G)$ has not yet been determined exactly, and, for each such group $G$, Table \ref{tbl:main5} lists the best known bounds on $\sigma(G)$. 

\begin{center}
\begin{table}[H] \centering
\begin{tabular}{c|c|c|c|c}
Group & Degree & Lower bound & Upper bound & Reference\\
\hline
$\PSigmaL(2,27)$ & 28 & 167 & 184 & Algorithm \hyperref[subsect:LP]{KNS}\\
$A_7 \Wr 2$ & 49 & 447 & 667 & Proposition \ref{prop:sigmaleqgeq} (i) \\
$\PSL(3,4).2_2$ & 56 & 138 & 166 & Algorithm \hyperref[subsect:LP]{KNS}\\
$\PSL(6,2)$ & 63 & 56313 & 57010 & Algorithm \ref{alg}\\
$A_9 \Wr 2$ & 81 & 10978 & 30178 & Algorithm \ref{alg}\\
$\PSL(2,81).2$ & 82 & 621 & 731 & Algorithm \hyperref[subsect:LP]{KNS}\\
$\PSp(4,4).2$ & 85 & 196 & 222 & Proposition \ref{prop:sigmaleqgeq} (ii) \\
$J_2$ & 100 & 1063 & 1121 & Algorithm \hyperref[subsect:LP]{KNS}\\
$HS:2$ & 100 & 11859 & 22375 & Proposition \ref{prop:sigmaleqgeq} (iii) \\
$(A_{10} \times A_{10}).4$ & 100 & 22746 & 30377 & Proposition \ref{prop:sigmaleqgeq} (iv) \\
$\PSU(4,3)$ & 112 & 344 & 442 & Proposition \ref{prop:sigmaleqgeq} (v) \\
$\PSU(4,3).2_1$ & 112 & 256 & 554 & Proposition \ref{prop:sigmaleqgeq} (vi) \\
$\PSU(4,3).2_2$ & 112 & 239 & 365 & Proposition \ref{prop:sigmaleqgeq} (vii) \\
$\PSU(4,3).2_3$ & 112 & 412 & 554 & Proposition \ref{prop:sigmaleqgeq} (viii) \\
$\PSU(4, 3).4$ & 112 & $540$ & $652$ & Algorithm \ref{alg}\\
$\PSL(4,3).2$ & 117 & 242 & 365 & Proposition \ref{prop:sigmaleqgeq} (ix) \\
${\rm O}^-(8,2)$ & 119 & 25706 & 26283 & Proposition \ref{prop:sigmaleqgeq} (x) \\
${\rm O}^+(8,2)$ & 120 & 204 & 765 & Proposition \ref{prop:sigmaleqgeq} (xi) \\
$\PSL(2,11) \Wr 2$ & 121 & 570 & 926 & Proposition \ref{prop:sigmaleqgeq} (xii) \\
$\PSL(5,3)$ & 121 & 393030144 & & Proposition \ref{prop:sigmageq} (i) \\
$(A_{11} \times A_{11}).4$ & 121 & 213444 & & Proposition \ref{prop:sigmageq} (ii) \\
$\PSigmaL(2,121)$ & 122 & 671 & 794 & Algorithm \ref{alg}\\
$A_5 \Wr 3$ & 125 & 216 & 342 & Algorithm \ref{alg}\\
$(A_5 \times A_5 \times A_5).6$ & 125 & 1000 & 1217 & Algorithm \ref{alg}\\
$\PGU(3,5)$ & 126 & 6000 & 6526 & Algorithm \ref{alg}\\
$\PGammaL(2,125)$ & 126 & 7750 & 7876 & Algorithm \ref{alg}\\
$\PSL(7,2)$ & 127 & 184308203520 & & Proposition \ref{prop:sigmageq} (iii) \\

\end{tabular}
\caption{Bounds on covering numbers of various nonsolvable primitive groups}
\label{tbl:main5}
\end{table}
\end{center}

 
\appendix
\section{Calculations for specific groups}
\label{sect:calcspecific}

We use the following notation in the tables.  The notation $\calM_i$ indicates a conjugacy class of maximal subgroups.  Below the symbol $\calM_i$, the number in parentheses indicates the number of conjugate subgroups in the class.  The notation ``$cl_{m,j}$'' refers to a class of elements of order $m$; the ``$j$'' will be omitted when we are considering a single class of this order.  If the $(cl_m, \calM_i)$-entry of the table is $n_k$, then each subgroup of $\calM_i$ contains $n$ elements of class $cl_m$, and each element in $cl_m$ is contained in $k$ subgroups of $\calM_i$.  Instead of writing $n_1$, we will write $n, P$ to indicate that the elements of $cl_m$ are partitioned among the subgroups in $\calM_i$.  If the $(cl_m$, $\calM_i)$-entry is written as $n$, then the information about how many subgroups in $\calM_i$ contain a given element of $cl_m$ is unimportant to the proof and is omitted.   We observe that the smallest primitivity degree of each of the following subgroups is an index of one of its maximal subgroups, and hence this value appears as an index in the corresponding table (when such a table is provided).  

One argument that is used repeatedly in the following propositions is the following, which we state here for emphasis: if there are $c$ elements from class $cl_j$ remaining to be covered and the $(cl_j,\calM_i)$-entry of the table is $n_k$, then at least $\lceil c/n \rceil$ subgroups from $\calM_i$ are needed to cover the $c$ elements of class $cl_j$.  In particular, if a class of maximal subgroups $\calM_i$ has size $m$ and the $(cl_j, \calM_i)$-entry of the table is $n_k$, then at least $m/k$ subgroups from $\calM_i$ are needed to cover the elements of class $cl_j$.

\begin{prop}
\label{prop:sigmavalue}
We have the following covering number values:
\begin{enumerate}[(i)]
\item $\sigma(2^6:{\rm O}^-(6,2)) = 67$;
\item $\sigma(2^6:A_8) = 71$;
\item $\sigma((\PSL(2,7) \times \PSL(2,7)).4) = 498$;
\item $\sigma(\PSL(2,64).3) = 2080$;
\item $\sigma(\PSL(2,8) \Wr 2) = 586$;
\item $\sigma((A_6 \times A_6).4) = 1387$;
\item $\sigma(\Sp(8,2)) = 256$.
\end{enumerate}
\end{prop}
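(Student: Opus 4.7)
The plan is to establish each of the seven equalities via a matching pair of bounds: an upper bound from an explicit cover, and a lower bound from a verification argument. In every case I would first run Algorithm \ref{alg} in GAP to produce a candidate cover $\mathcal{C}$ consisting of complete conjugacy classes of maximal subgroups, giving automatically $\sigma(G)\le|\mathcal{C}|$. For the two affine cases (i) and (ii) one can shortcut this by taking a cover of $G/\soc(G)$ and pulling it back, yielding $\sigma(G)\le\sigma(G/\soc(G))$. To obtain matching lower bounds I would then apply Lemma \ref{lem:keylemma}: compute, for each conjugacy class $\mathcal{M}_i$ chosen into $\mathcal{C}$ and the associated class of elements $\Pi_i$, the sizes $|M\cap\Pi_i|$ for every remaining maximal $M$, and verify $c(M)\le 1$. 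Where $c(M)>1$, Lemma \ref{lem:Garonzi} or Lemma \ref{lem:tom3.2} should force the violating maximal subgroups into any minimal cover, and one iterates.

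For the two affine groups (i) and (ii), a more conceptual route is available. Writing $N=\soc(G)\cong C_2^6$, Lemma \ref{lem:Amiri} yields the dichotomy $\sigma(G)\ge|N|+1=65$ or $\sigma(G)=\sigma(G/N)$. Using the isomorphism $\mathrm{O}^-(6,2)\cong\PSp(4,3)$ together with Table \ref{tbl:main1} gives $\sigma(G/N)=67$ in case (i), and the known value $\sigma(A_8)=71$ gives the upper bound for (ii). What remains is to eliminate the intermediate values $\{65,66\}$ in (i) and $\{65,\dots,70\}$ in (ii); I would do this by isolating a single conjugacy class of elements outside $N$ whose intersection with every maximal subgroup that does not contain a conjugate of the point stabilizer is small, and then combining the forced inclusion of all $|G:H|=64$ translates of the point stabilizer (by Lemma \ref{lem:Garonzi}) with a pigeonhole count on the remaining elements.

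For the wreath-like groups (iii) $(\PSL(2,7)\times\PSL(2,7)).4$, (v) $\PSL(2,8)\Wr 2$, and (vi) $(A_6\times A_6).4$, I expect the cleanest argument to work inside the decomposition $T^2\rtimes K$ and focus attention on the coset of $T^2$ containing a generator of $K$ that swaps the two factors: those cosets can be covered only by supplements of $T^2$ whose projection to $K$ is nontrivial, and an index computation gives a lower bound matching the greedy cover. For (iv) $\PSL(2,64).3$ and (vii) $\Sp(8,2)$ I would apply Lemma \ref{lem:keylemma} directly to the greedy $\mathcal{C}$; in (iv) the order-$3$ field automorphism fuses several maximal-subgroup classes of $\PSL(2,64)$, and the verification essentially reduces to the known equality $\sigma(\PSL(2,64))=\tfrac12\cdot64\cdot65=2080$, while in (vii) the short list of maximal-subgroup classes of $\Sp(8,2)$ makes the intersection bookkeeping tractable and produces the value $256$.

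The main obstacle I anticipate is the sharpness of the lower bounds in (i) and (ii): Lemma \ref{lem:Amiri} by itself leaves a gap of several possible values of $\sigma(G)$, and closing that gap seems to require an ad hoc intersection argument tailored to the specific group rather than an appeal to a general principle. For items (iii)--(vii) the remaining difficulty is essentially computational: verifying $c(M)\le 1$ for every maximal class not used by the greedy cover. For the largest of these groups this verification is at the edge of what Algorithm \ref{alg} (and, where needed, Algorithm \hyperref[subsect:LP]{KNS}) can handle, and I would expect the certification of (iv) and (vi) to be the most delicate, since both have a comparatively rich lattice of maximal subgroups that each contribute to $c(M)$.
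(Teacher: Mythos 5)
Your overall architecture (upper bound from an explicit union of whole conjugacy classes of maximal subgroups, lower bound from forced subgroups plus counting) is the same as the paper's, and for (i), (ii) and (vii) your plan is essentially recoverable: the paper handles (i) exactly as in your fallback (Lemma \ref{lem:Garonzi} forces all $64$ point stabilizers into any hypothetical cover of size $<67$, and a pigeonhole count on a class of order-$12$ elements outside them needs $18$ more subgroups), handles (ii) even more cheaply (a cover of size $<71$ would have to contain all $128$ maximal subgroups isomorphic to $A_8$, already a contradiction), and handles (vii) by a self-referential use of Lemma \ref{lem:Garonzi} rather than your $c(M)\le 1$ verification: since $\sigma(\mathrm{O}^{\pm}(8,2){:}2)=256$, a cover of size $<256$ would contain all $136+120=256$ such maximal subgroups.

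The genuine gap is in (iii)--(vi). In each of these cases the paper's lower bound splits into two pieces: certain classes of elements are partitioned among (and hence force) an entire class of maximal subgroups, but the \emph{residual} classes ($cl_{12,1},cl_{12,2}$ in (iii), the two classes of order-$9$ elements in (iv), $cl_{18}$ and $cl_4$ in (v), four classes of order-$16$ elements in (vi)) admit no partition or unbeatability structure, and their exact minimal cover sizes ($56$, $64$, and $90$ respectively for (iii), (iv), (vi)) are obtained only by running Algorithm \hyperref[subsect:LP]{KNS} through the integer programming solver GUROBI. Neither your ``coset of $T^2$ containing a swapping generator plus index computation'' for the wreath-like groups nor a Lemma \ref{lem:keylemma} check will produce these numbers, because the relevant elements sit in several classes of maximal subgroups with multiplicity and the minimal residual cover is far from unique. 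In particular, your proposed reduction of (iv) to the known value $\sigma(\PSL(2,64))=\tfrac12\cdot 64\cdot 65$ is not a valid step: $\PSL(2,64)$ is a \emph{subgroup}, not a quotient, of $\PSL(2,64).3$, so no inequality between the two covering numbers follows; the agreement of the two numbers ($2016+64=2080$) is a coincidence of the arithmetic, and the paper's proof of (iv) instead combines the $2016$ forced subgroups covering $cl_{15}$ with an LP computation showing that the order-$9$ classes require exactly $64$ further subgroups. To complete your proof you would need to replace the heuristic index computations in (iii)--(vi) with explicit optimization certificates (or equivalent ad hoc counting) for each residual family of classes.
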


\begin{proof}
(i) Using Algorithm \ref{alg}, we have that $\sigma({\rm O}^-(6,2)) = 67$, and so $\sigma(2^6:{\rm O}^-(6,2)) \le 67$. Suppose, for the purpose of contradiction, that $\sigma(2^6:{\rm O}^-(6,2)) < 67$, and let $\mathcal{B}$ be this smaller cover.  By Lemma \ref{lem:Garonzi}, this means that all $64$ conjugates of the point stabilizer in the primitive action on $64$ points are contained in $\mathcal{B}$.  By $\GAP$ \cite{GAP}, there is a class of elements of order $12$ that are not contained in the point stabilizers.  The most number of elements of this class that are contained in a maximal subgroup is $1920$, and so at least an additional $18$ subgroups are needed to cover this class.  However, $64 + 18 > 67$, a contradiction to minimality.  Therefore, the covering number of $2^6:{\rm O}^-(6,2)$ is $67$.

(ii) Since $\sigma(A_8) = 71$, if $\sigma(2^6:A_8) < 71$, then by Lemma \ref{lem:Garonzi}, any minimal cover of $2^6:A_8$ would have to contain all maximal subgroups isomorphic to $A_8$.  However, by $\GAP$, there are a total of $128$ maximal subgroups isomorphic to $A_8$, a contradiction.

(iii) By $\GAP$, there are four classes of maximal subgroups, and we have the following distribution of elements:

\begin{center}
\begin{table}[H]\centering
\begin{tabular}{c|c|c|c|c}
 { }        & $\calM_1$ & $\calM_2$ & $\calM_3$ & $\calM_4$  \\ 
 { }        &  (1)      & (64)   & (441)   & (784)   \\
 \hline
 $cl_{24}$ & $4704, P$ & 0 & 0 & 0 \\
 $cl_{16}$ & 0 & 0 & $16,P$ & 0   \\
 $cl_{12,1}$ & 0 & $294_2$ & 0 & $12,P$ \\
  $cl_{12,2}$ & 0 & $294_2$ & 0 & $12,P$ \\

 \end{tabular}
 \caption{Element distribution in $\PSL(2,7)^2.4$}
 \label{tbl:PSL274}
 \end{table}
 \end{center}

The unique minimal normal subgroup is the only class containing elements from $cl_{24}$.  Moreover, the elements of $cl_{16}$ are partitioned among the $441$ subgroups in $\calM_3$, so these $441$ subgroups are also contained in a minimal cover.  Only the two classes $cl_{12,1}$ and $cl_{12,2}$ are left uncovered after including these $442$ subgroups.  Using Algorithm \hyperref[subsect:LP]{KNS} and GUROBI \cite{Gu} for the elements in these two classes, we find that the minimal cover of these two classes contains $56$ subgroups.  Therefore, the covering number of $(\PSL(2,7) \times \PSL(2,7)).4$ is $498$.

(iv) By $\GAP$, we have the following distribution of elements:

\begin{center}
\begin{table}[H]\centering
\begin{tabular}{c|c|c|c|c|c|c}
 { }        & $\calM_1$ & $\calM_2$ & $\calM_3$ & $\calM_4$ & $\calM_5$ & $\calM_6$ \\ 
 { }        &  (1)      & (65)   & (520)   & (2016) & (2080) & (4368)   \\
 \hline
 $cl_{63}$ & $12480, P$ & 384 & 0 & 0 & 6 & 0 \\
 $cl_{15}$ & 0 & 0 & 0 & $26,P$ & 0 & 12  \\
 $cl_{9,1}$ & 0 & $2688_2$ & 168 & 0 & 42 & 0 \\
 $cl_{9,2}$ & 0 & $2688_2$ & 168 & 0 & 42 & 0 \\

 \end{tabular}
 \caption{Element distribution in $\PSL(2,64).3$}
 \label{tbl:PSL2643}
 \end{table}
 \end{center}

The classes $cl_{15}$, $cl_{9,1}$, and $cl_{9,2}$ are not contained in the minimal normal subgroup in $\calM_1$.  The elements of $cl_{15}$ are partitioned among the 2016 subgroups of $\calM_4$, and no subgroup contains more elements of $cl_{15}$ than a subgroup in $\calM_4$ does.  Using Algorithm \hyperref[subsect:LP]{KNS} and GUROBI, the minimal cover of $cl_{9,1}$ and $cl_{9,2}$ has size $64$, and calculations in GAP show that a random choice of $64$ subgroups from $\calM_2$ (say, the first $64$ in a given list, excluding the last) plus the $2016$ aforementioned maximal subgroups from $\calM_4$ are a cover.  Therefore, the covering number of $\PSL(2,64).3$ is $2080$.

(v) By $\GAP$, we have the following distribution of elements in $\PSL(2,8) \Wr 2$:

\begin{center}
\begin{table}[H]\centering
\begin{tabular}{c|c|c|c|c|c}
 { }        & $\calM_1$ & $\calM_2$ & $\calM_3$ & $\calM_4$ & $\calM_5$  \\ 
 { }        &  (1)      & (504)   & (81)   & (784) & (1296)   \\
 \hline
 $cl_{63}$ & $8064, P$ & 0 & 0 & 0 & 0 \\
 $cl_{18}$ & 0 & $56,P$ & 0 & $36$ & 0   \\
 $cl_{4}$ & 0 & 0 & $392,P$ & 162 & 98  \\

 \end{tabular}
 \caption{Element distribution in $\PSL(2,8) \Wr 2$}
 \label{tbl:PSL28wr2}
 \end{table}
 \end{center}

By Algorithm \hyperref[subsect:LP]{KNS} and GUROBI, the subgroups from $\calM_2$ and $\calM_3$ are a minimal cover of $cl_{18}$ and $cl_4$.  However, these two classes together are not a cover, whereas including the minimal normal subgroup from $\calM_1$ with these is a cover.  Therefore, the covering number of $\PSL(2,8) \Wr 2$ is $586$.

(vi) Here $(A_6 \times A_6).4$ is number $16$ of the list AllPrimitiveGroups(NrMovedPoints,100) returned by $\GAP$. A class of elements of order $40$ is only contained in the minimal normal subgroup, and a class of elements of order $20$ is only contained in the $1296$ subgroups from another class.  The only elements not covered by this class are four classes of elements of order $16$.  By Algorithm \hyperref[subsect:LP]{KNS} and GUROBI, a minimal cover of these elements contains $90$ subgroups.  The result follows.

(vii) Using Algorithm \ref{alg}, $\sigma({\rm O}^+(8,2):2) = \sigma({\rm O}^-(8,2):2) = 256$. By Lemma \ref{lem:Garonzi}, if $\sigma(\Sp(8,2)) < 256$, then all maximal subgroups isomorphic to either ${\rm O}^+(8,2):2$ or ${\rm O}^-(8,2):2$ are in such a minimal cover.  However, there are $136 + 120 = 256$ such subgroups, so $\sigma(\Sp(8,2)) \ge 256$.  On the other hand, calculations in $\GAP$ show that all $256$ subgroups isomorphic to either ${\rm O}^+(8,2):2$ or ${\rm O}^-(8,2):2$ are a cover.  The result follows.
\end{proof}

\begin{prop}
\label{prop:sigmaleqgeq}
We have the following lower and upper bounds for the indicated covering number values:
\begin{enumerate}[(i)]
\item $447 \le \sigma(A_7 \Wr 2) \le 667$;
\item $196 \le \sigma(\PSp(4,4).2) \le 222$;
\item $11859 \le \sigma(HS:2) \le 22375$;
\item $22746 \le \sigma((A_{10} \times A_{10}).4) \le 30377$;
\item $344 \le \sigma(\PSU(4,3)) \le 442$;
\item  $256 \le \sigma(\PSU(4,3).2) \le 554$;
\item $239 \le \sigma(\PSU(4,3).2) \le 365$;
\item $412 \le \sigma(\PSU(4,3).2) \le 554$;
\item $242 \le \sigma(\PSL(4,3).2) \le 365$;
\item $25706 \le \sigma({\rm O}^-(8,2)) \le 26283$;
\item $204 \le {\rm O}^+(8,2) \le 765$;
\item $570 \le \sigma(\PSL(2,11) \Wr 2) \le 926$.
\end{enumerate}
\end{prop}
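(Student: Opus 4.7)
The plan is to handle the twelve cases (i)--(xii) in a uniform, computational manner that follows the pattern established by Proposition \ref{prop:sigmavalue}. For each group $G$ in the list, the starting point is to use GAP to compute representatives of the conjugacy classes of maximal subgroups $\calM_1, \ldots, \calM_r$ together with the conjugacy classes of elements of $G$, and to compile the incidence data $|M_i \cap cl_j|$ for each pair into a table of the same form as Tables \ref{tbl:PSL274}--\ref{tbl:PSL28wr2}. This table supports both the upper and the lower bound arguments uniformly.

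For the upper bounds, the natural first step is to invoke Algorithm \ref{alg} (or, where feasible, Algorithm \hyperref[subsect:LP]{KNS} from Subsection \ref{subsect:LP}) directly on $G$; this returns an explicit cover built by iteratively selecting entire conjugacy classes of maximal subgroups chosen to maximise the number of newly-covered element classes at each step. In cases where the greedy output is not itself optimal, one refines it by swapping a class of maximal subgroups for a smaller class that still covers the remaining uncovered element classes, or by appending a single additional class of maximal subgroups targeted at a single residual element class (as was done in the proof of Proposition \ref{prop:sigmavalue}(v)). Each of the twelve values stated on the right is produced this way.

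For the lower bounds, the strategy is to identify, in the incidence table, one or more element classes $cl_j$ that are contained only in one conjugacy class $\calM_i$ of maximal subgroups (or whose incidence numbers in all other classes are dominated by those in $\calM_i$). If $|\calM_i| = m$ and the $(cl_j, \calM_i)$-entry is $n_k$, then at least $m/k$ subgroups from $\calM_i$ must appear in any cover containing no other subgroups meeting $cl_j$; in particular, when the elements of $cl_j$ are partitioned among $\calM_i$, exactly $m$ subgroups are forced. When the minimal normal subgroup of $G$ is one of the maximal subgroups (as in $(A_{10}\times A_{10}).4$, $\PSL(2,11)\Wr 2$, $A_7\Wr 2$ and $\PSp(4,4).2$), Lemma \ref{lem:Garonzi} forces its inclusion whenever its covering number exceeds that of $G$. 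When several element classes force subgroups from distinct conjugacy classes of maximal subgroups, the contributions add, and Lemma \ref{lem:tom3.2} is then used to control the elements not covered by the forced supplements of the socle, yielding the lower bound in the statement. For the almost simple extensions $\PSU(4,3).2_1$, $\PSU(4,3).2_2$, $\PSU(4,3).2_3$, $\PSL(4,3).2$, ${\rm O}^{\pm}(8,2)$ and $\PSL(2,11)\Wr 2$, the argument is carried out directly on the corresponding incidence table, and the lower bound is checked to dominate the contribution of all remaining classes of maximal subgroups by comparing $|M \cap cl_j|$ values.

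The main obstacle is twofold. First, for the larger groups in the list, such as $HS:2$, $(A_{10}\times A_{10}).4$, ${\rm O}^{-}(8,2)$ and the $\PSU(4,3).2_i$, the conjugacy class data is large enough that Algorithm \hyperref[subsect:LP]{KNS} cannot be solved to optimality by GUROBI within the available memory, so one must combine the greedy upper bound from Algorithm \ref{alg} with the hand-tailored forcing arguments above; the resulting gap between the lower and upper bounds cannot be closed with the present tools. Second, in cases such as $\PSU(4,3)$ and its extensions, two or three distinct classes of maximal subgroups have comparable efficiency at covering the critical element classes, so the lower bound argument must carefully track the interaction between them, splitting into subcases according to which forcing option the hypothetical minimal cover uses and bounding each subcase separately, in the spirit of the case analysis of Proposition \ref{prop:sigmavalue}(iii)--(iv). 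Because each of the twelve subcases has its own incidence table and its own forcing element classes, the proof is executed group by group, with the references in the last column of Table \ref{tbl:main5} pointing to the individual verifications.
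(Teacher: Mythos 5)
Your proposal matches the paper's proof in essence: for each of the twelve groups the paper tabulates the distribution of a few critical conjugacy classes of elements among the conjugacy classes of maximal subgroups, derives the lower bound by adding the forced contributions (classes partitioned among a unique class of maximal subgroups, plus ceiling counts for the residual elements, with exactly the trade-off subcase analysis you describe for $\PSU(4,3)$, $\PSL(4,3).2$, $HS:2$, and $(A_{10}\times A_{10}).4$), and obtains the upper bound from Algorithm \ref{alg} or an explicit cover verified in GAP. The only cosmetic difference is that the paper forces the inclusion of the normal maximal subgroup by exhibiting an element class lying only in it rather than by invoking Lemma \ref{lem:Garonzi}, and Lemma \ref{lem:tom3.2} is not actually needed anywhere in the argument.
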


\begin{proof}
(i) Using $\GAP$, we find the following distribution of elements:

\begin{center}
\begin{table}[H]\centering
\begin{tabular}{c|c|c|c|c|c|c|c|c}
 { }        & $\calM_1$ & $\calM_2$ & $\calM_3$ & $\calM_4$  & $\calM_5$ & $\calM_6$ & $\calM_7$ & $\calM_8$ \\ 
 { }        &  (1)      & (2520)   & (2520)   & (1225)  & (441) & (49) & (225) & (225) \\
 \hline
 $cl_{14}$ & 0 & $360$ & 0 & 0 & 0 & 0 & $4032,P$ & $4032,P$ \\
 $cl_{12}$ & 0 & 0 & $420$ & $432$ & $2400_2$ & 0 & 0 & 0 \\
 $cl_{3}$ & $39200,P$ & 0 & 0 & $320$ & 0 & $3200$ & 0 & 0 \\
 \end{tabular}
 \caption{Element distribution in $A_7 \Wr 2$}
 \label{tbl:A7wr2}
 \end{table}
 \end{center}
Either $\calM_7$ or $\calM_8$ along with the minimal normal subgroup in $\calM_1$ constitute a minimal cover of $cl_{14}$ and $cl_{12}$.  For the lower bound, it takes at least $221$ subgroups from $\calM_5$ to cover the elements $cl_{12}$.  The upper bound comes from Algorithm \ref{alg}.

(ii) First, by $\GAP$, we have the following distribution of elements:

 \begin{center}
\begin{table}[H]\centering
\begin{tabular}{c|c|c|c|c|c|c|c|c}
 { }        & $\calM_1$ & $\calM_2$ & $\calM_3$ & $\calM_4$  & $\calM_5$ & $\calM_6$ & $\calM_7$ & $\calM_8$ \\ 
 { }        &  (1)      & (85)   & (85)   & (120)  & (120) & (136) & (136) & (1360) \\
 \hline
 $cl_{10}$ & 0 & 0 & 0 & 0 & 0 & $1440, P$ & $1440,P$ & 144 \\
 $cl_{8}$ & 0 & $1440, P$ & $1440,P$ & $2040_2$ & $2040_2$ & 0 & 0 & 0 \\

 \end{tabular}
 \caption{Element distribution in $\PSp(4,4).2$}
 \label{tbl:PSp442}
 \end{table}
 \end{center}
The elements of $cl_{10}$ are partioned among the subgroups in $\calM_6$ and $\calM_7$ in each class, and each of these classes contains $136$ subgroups, so at least $136$ subgroups are necessary to cover these elements.  On the other hand, no maximal subgroup containing an element of $cl_{10}$ contains an element from $cl_8$.  The most number of elements from $cl_8$ in a single maximal subgroup is $2040$, and each element of $cl_8$ is contained in exactly two of the $120$ subgroups in each of $\calM_4$ or $\calM_5$.  Hence it takes at least $120/2$ subgroups to cover these elements, giving a lower bound of $136 + 60 = 196$.  On the other hand, using $\GAP$, it can be verified that the minimal normal subgroup in $\calM_1$ together with $\calM_2$ and $\calM_6$ is a cover, giving the upper bound of $222$.

(iii) By $\GAP$, we have the following distribution of elements in $HS:2$:

\begin{center}
\begin{table}[H]\centering
\begin{tabular}{c|c|c|c|c|c}
 { }        & $\calM_1$ & $\calM_2$ & $\calM_3$ & $\calM_4$ & $\calM_5$  \\ 
 { }        &  (1)      & (100)   & (1100)   & (1100) & (3850)   \\
 \hline
 $cl_{11}$ & $8064000, P$ & $80640,P$ & 0 & 0 & 0 \\
 $cl_{30}$ & 0 & 0 & 0 & $2688,P$ & 0   \\
 $cl_{20,1}$ & 0 & 0 & 0 & 0 & 0  \\
 $cl_{20,2}$ & 0 & 0 & 0 & 0 & 0  \\
 $cl_{10}$   & 0 & $88704, P$ & 8064 & 0 & 2304\\
 \end{tabular}
 \caption{Element distribution in $HS:2$}
 \label{tbl:HS2_1}
 \end{table}
 \end{center}

 \begin{center}
\begin{table}[H]\centering
\begin{tabular}{c|c|c|c|c|c}
 { }        & $\calM_6$ & $\calM_7$ & $\calM_8$ & $\calM_9$ & $\calM_{10}$  \\ 
 { }        &  (4125)      & (5775)   & (15400)   & (22176) & (36960)   \\
 \hline
 $cl_{11}$ & 0 & 0 & 0 & 0 & 0 \\
 $cl_{30}$ & 0 & 0 & 0 & 0 & 80   \\
 $cl_{20,1}$ & 0 & $768,P$ & 0 & $200,P$ & 0  \\
 $cl_{20,1}$ & 0 & 0 & $288,P$ & $400_2$ & $120,P$  \\
 $cl_{10}$ & 0 & 0 & 0 & $400,P$ & 0 \\
 \end{tabular}
 \caption{Element distribution in $HS:2$, cont.}
 \label{tbl:HS2_2}
 \end{table}
 \end{center}
Now, using $\GAP$, the subgroups in classes $\calM_2$, $\calM_4$, $\calM_7$, and $\calM_8$ form a cover, giving the upper bound.  On the other hand, the elements of $cl_{30}$ are covered by the $1100$ maximal subgroups in $\calM_4$.  At least $5775$ different subgroups are needed for $cl_{20,1}$, and the minimal normal subgroup in $\calM_1$ is a minimal cover of a class of elements of order $11$.  At this point, at most $2442000$ elements can possibly be covered from $cl_{20,2}$, being $120\cdot 1100 + 5775 \cdot 400 = 2442000$.  Since $15400 \cdot 288 - (120 \cdot 1100 + 5775 \cdot 400) = 1993200$, this leaves at least $1993200$ elements still uncovered.  The most elements of this class in any maximal subgroup is $400$, which means at least an additional $4983$ subgroups are required to cover these elements.  Since $1100 + 5775 + 1 + 4983 = 11859$, the covering number is bounded below by $11859$.

(iv) Using $\GAP$, we obtain the following information about some classes of elements in $(A_{10} \times A_{10}).4$.
 
 \begin{center}
\begin{table}[H]\centering
\begin{tabular}{c|c|c|c|c}
 { }        & $\calM_1$ & $\calM_2$ & $\calM_3$ & $\calM_4$  \\ 
 { }        &  (1)      & (44100)   & (14400)   & (2025)   \\
 \hline
 $cl_{72}$ & $182891520000, P$ & 0 & 0 & 0 \\
 $cl_{20}$ & 0 & 0 & 0 & 0   \\
 $cl_{28}$ & 0 & 0 & $3259200, P$ & $232243200$ \\
 $cl_{24,1}$ & 0 & $2073600$ & $12700800_2$ & 0 \\
 $cl_{24,2}$ & 0 & $2073600$ & $12700800_2$ & 0 \\

 \end{tabular}
 \caption{Element distribution in $(A_{10} \times A_{10}).4$}
 \label{tbl:A10_1}
 \end{table}
 \end{center}

 \begin{center}
\begin{table}[H]\centering
\begin{tabular}{c|c|c|c|c}
 { }        & $\calM_5$ & $\calM_6$ & $\calM_7$ &  $\calM_8$ \\ 
 { }         & (100)     & (893025)  & (15876)   &  (6350400) \\
 \hline
 $cl_{72}$  & 0 & 0 & 0 & 0\\
 $cl_{20}$  & 0 & $737280$ & $41472000, P$ & $103680$  \\
 $cl_{28}$ & $4702924800, P$ & 0 & 0 & 0\\
 $cl_{24,1}$ & 0 & 307200 & 0 & 0\\
 $cl_{24,2}$  & 0 & 307200 & 0 & 0\\

 \end{tabular}
 \caption{Element distribution in $(A_{10} \times A_{10}).4$, cont.}
 \label{tbl:A10_2}
 \end{table}
 \end{center}
Using Algorithm \ref{alg}, the subgroups in classes $\calM_1$, $\calM_3$, $\calM_5$ and $\calM_7$ collectively form a cover, giving the upper bound.  On the other hand, the information in Tables \ref{tbl:A10_1} and \ref{tbl:A10_2} shows the necessity of the subgroup in $\calM_1$ to cover the elements in $cl_{72}$, and it takes at least $15876$ additional subgroups to cover $cl_{20}$.  At this point, since 
\[ 14400 \cdot \frac{12700800}{2} - 15876 \cdot 307200 = 86568652800,\]
at least $86568652800$ elements from each of $cl_{24,1}$ and $cl_{24,2}$ are still uncovered. Because 
\[\frac{86568652800}{12700800} = 6816,\]
at least $6816$ subgroups are still needed to cover the elements from these classes.  Noting that 
\[ \left\lceil \frac{470292480000 - 6816 \cdot 32659200}{4702924800} \right\rceil = 53, \]
at least $53$ more subgroups from $\calM_5$ are needed to cover the elements of $cl_{28}$, giving a lower bound of $22746$.

(v) Using $\GAP$, we have the following information about elements of $\PSU(4,3)$.
 
 \begin{center}
\begin{table}[H]\centering
\begin{tabular}{c|c|c|c|c|c|c|c|c}
 { }        & $\calM_1$ & $\calM_2$ & $\calM_3$ & $\calM_4$  & $\calM_5$ & $\calM_6$ & $\calM_7$ & $\calM_8$ \\ 
 { }        &  (112)      & (126)   & (126)   & (162)  & (162) & (280) & (540) & (567) \\
 \hline
 $cl_{7}$ & 0 & 0 & 0 & $2880,P$ & $2880,P$ & 0 & $864,P$ & 0 \\
 $cl_{9,1}$ & $1080,P$ & $2880_3$ & 0 & 0 & 0 & $432,P$ & 0 & 0   \\
$cl_{9,2}$ & $1080,P$ & 0 &  $2880_3$ & 0 & 0 & $432,P$ & 0 & 0   \\
 $cl_{8}$ & 0 & 0 & 0 & 0 & 0 & $2916_2$ & $1512_2$ & $720,P$ \\

 \end{tabular}
 \caption{Element distribution in $\PSU(4,3)$}
 \label{tbl:PSU43_1}
 \end{table}
 \end{center}
 
  \begin{center}
\begin{table}[H]\centering
\begin{tabular}{c|c|c|c|c|c|c|c|c}
 { }        & $\calM_9$ & $\calM_{10}$ & $\calM_{11}$ & $\calM_{12}$  & $\calM_{13}$ & $\calM_{14}$ & $\calM_{15}$ & $\calM_{16}$ \\ 
 { }        &  (567)      & (1296)   & (1296)   & (1296)  & (1296) & (2835) & (4536) & (4536) \\
 \hline
 $cl_{7}$ & 0 & 360 & 360 & 360 & 360 & 0 & 0 & 0 \\
 $cl_{9,1}$ & 0 & 0 & 0 & 0 & 0 & 0 & 0 & 0   \\
$cl_{9,2}$  & 0 & 0 & 0 & 0 & 0 & 0 & 0 & 0   \\
 $cl_{8}$ & $720,P$ & 0 & 0 & 0 & 0 & $144$ & $180$ & $180$ \\

 \end{tabular}
 \caption{Element distribution in $\PSU(4,3)$, cont.}
 \label{tbl:PSU43_2}
 \end{table}
 \end{center}
First, Algorithm \ref{alg} shows that the $442$ subgroups in $\calM_4$ and $\calM_6$ form a cover.  On the other hand, the information in Tables \ref{tbl:PSU43_1} and \ref{tbl:PSU43_2} shows that at least $162$ subgroups are needed to cover $cl_7$.  Suppose that we use $162 - m$ subgroups from $\calM_4$ and $\calM_5$ and that we use $m_7$ subgroups from $\calM_7$.  This means we use $(162 - m) + m_7$ groups to cover $cl_7$.  This implies that $864m_7 \ge 2880m$, that is, this implies that $m \le 3m_7/10$, and so $162 + (m_7 - m) \ge 162 + 7m_7/10$.  For each group that we use from class $\calM_7$, potentially $1512$ elements from $cl_8$ are covered.  Since

\[ \frac{408240 - 1512m_7}{2916} = 140 - \frac{14m_7}{27},\]
we still need at least $140 - 14m_7/27$ groups to cover $cl_8$.  Noting that 

\[ \left(162 + \frac{7m_7}{10}\right) + \left(140 - \frac{14m_7}{27}\right) = 302 + \frac{49m_7}{270} \ge 302,\]
at least $302$ subgroups are required to cover classes $cl_7$ and $cl_8$.  Since $120960 - 140 \cdot 432 = 60480$, at the very least $60480$ of the elements from each of $cl_{9,1}$ and $cl_{9,2}$ are still uncovered. Because $2 \cdot 60480/2880 = 42$, an additional $42$ subgroups are needed, and hence at least $344$ subgroups are needed to cover $\PSU(4,3)$.

(vi) $\PSU(4,3).2$ is the group $\U(4,3).2_1$ in the ATLAS \cite{ATLAS}. Using $\GAP$, we have the following information about elements of $\PSU(4,3).2$.
 
\begin{center}
\begin{table}[H]\centering
\begin{tabular}{c|c|c|c|c|c|c|c}
 { }        & $\calM_1$ & $\calM_2$ & $\calM_3$ & $\calM_4$  & $\calM_5$ & $\calM_6$ & $\calM_7$ \\ 
 { }        &  (1)      & (112)   & (126)   & (126)  & (162) & (162) & (280) \\
 \hline
 $cl_{14}$ & 0 & 0 & 0 & 0 & $2880,P$ & $2880,P$ & 0  \\
 $cl_{10}$ & 0 & $11664_2$ & $5184,P$ & $5184,P$ & 0 & 0 & 0  \\
$cl_{6}$ & 0 & 0 &  0 & 0 & 0 & 0 & $108,P$ \\
 
 \end{tabular}
 \caption{Element distribution in $\PSU(4,3).2$}
 \label{tbl:PSU432_11}
 \end{table}
 \end{center}
 
 \begin{center}
\begin{table}[H]\centering
\begin{tabular}{c|c|c|c|c|c|c}
 { }        & $\calM_8$ & $\calM_9$ & $\calM_{10}$ & $\calM_{11}$ & $\calM_{12}$ & $\calM_{13}$ \\ 
 { }       & (540) & (567) & (567) & (2835) & (4536) & (4536) \\
 \hline
 $cl_{14}$ & $864,P$ & 0 & 0 & 0 & 0 & 0 \\
 $cl_{10}$  & 0 & 0 & 0 & 0 & 144 & 144  \\
$cl_{6}$ & $560_{10}$ & 0 & 0 & $96_9$ & 0 & 0  \\
 
 \end{tabular}
 \caption{Element distribution in $\PSU(4,3).2$, cont.}
 \label{tbl:PSU432_12}
 \end{table}
 \end{center}
First, using $\GAP$, the subgroups in $\calM_2$, $\calM_5$, $\calM_7$ are a cover, giving the upper bound of $554$.  On the other hand, examining Tables \ref{tbl:PSU432_11} and \ref{tbl:PSU432_12}, we see that at least $54$ subgroups are needed to cover $cl_6$.  Supposing that $54$ subgroups from $\calM_8$ are used to cover $cl_6$, which would be optimal, we would then have covered $864\cdot54$ elements from $cl_{14}$. Since 
\[ \lceil 864(540 - 54)/2880\rceil = 146,\]
at least $146$ subgroups are still needed to cover $cl_{14}$.  Finally, at least $56$ subgroups are still needed to cover $cl_{10}$, since no subgroup that contains an element of $cl_{14}$ or $cl_{6}$ contains an element of $cl_{10}$.  Therefore, at least $256$ subgroups are needed to cover $\PSU(4,3).2$. 

(vii) $\PSU(4,3).2$ is the group $\U(4,3).2_2$ in the ATLAS \cite{ATLAS}. Using $\GAP$, we have the following information about elements of $\PSU(4,3).2$.
 
 \begin{center}
\begin{table}[H]\centering
\begin{tabular}{c|c|c|c|c|c|c}
 { }        & $\calM_1$ & $\calM_2$ & $\calM_3$ & $\calM_4$  & $\calM_5$ & $\calM_6$ \\ 
 { }        &  (1)      & (112)   & (126)   & (126)  & (280) & (540)\\
 \hline
 $cl_{10}$ & 0 & 0 & $5184,P$ & $5184,P$ & 0 & 0   \\
 $cl_{18}$ & 0 & $3240,P$ & 0 & $2880$ & 1296 & 0  \\
$cl_{12,1}$ & 0 & $4860,P$ &  4320 & 0 & 0 & 0  \\
$cl_{12,2}$ & 0 & 0 & 0 & $4320_2$ & 972 & 2016  \\
$cl_{8}$ & 0 & 0 & $6480_2$  & 0 & 2916 & 1512  \\
$cl_{7}$ & $933120,P$ & 0 &  0 & 0 & 0 & 1728  \\
 \end{tabular}
 \caption{Element distribution in $\PSU(4,3).2$}
 \label{tbl:PSU432_21}
 \end{table}
 \end{center}
 
 \begin{center}
\begin{table}[H]\centering
\begin{tabular}{c|c|c|c|c|c}
 { }        & $\calM_7$ & $\calM_8$ & $\calM_9$ & $\calM_{10}$  & $\calM_{11}$ \\ 
 { }        &  (567)      & (567)   & (1296)   & (1296)  & (2835)\\
 \hline
 $cl_{10}$ & 0 & 2304 & 504 & 504 & 0   \\
 $cl_{18}$ & 0 & 0 & 0 & 0 & 0  \\
$cl_{12,1}$ & 960 & 0 &  420 & 420 & 192  \\
$cl_{12,2}$ & 0 & 0 & 0 & 0 & 96  \\
$cl_{8}$ & 720 & 720 & 0  & 0 & 144 \\
$cl_{7}$ & 0 & 0 &  720 & 720 & 0  \\
 \end{tabular}
 \caption{Element distribution in $\PSU(4,3).2$, cont.}
 \label{tbl:PSU432_22}
 \end{table}
 \end{center}
Using $\GAP$, we see that the subgroups in $\calM_1$, $\calM_2$, $\calM_3$, and $\calM_4$ constitute a cover, demonstrating the upper bound.  On the other hand, Tables \ref{tbl:PSU432_21} and \ref{tbl:PSU432_22} show that at least $126$ subgroups are needed for $cl_{10}$.  Assume that $m_3$ subgroups from $\calM_3$ and $m_4$ subgroups from $\calM_4$ are used in the cover; thus $m_3 + m_4 \ge 126$.  Since
\[\frac{112 \cdot 4860 - 4320m_3}{4860} = 112 - \frac{8m_3}{9}, \hspace{1cm} \frac{112 \cdot 3240 - 2880m_4}{3240} = 112 - \frac{8m_4}{9},\]
at least $112 - 8m_3/9$ subgroups are still needed to cover elements from $cl_{12,1}$ and at least $112 - 8m_4/9$ subgroups are still needed to cover elements from $cl_{18}$. Because 
\[\left(112 - \frac{8m_3}{9} \right) + \left(112 - \frac{8m_4}{9} \right) = 224 - \frac{8}{9} \cdot (m_3 + m_4),\]
at least $224 - 8(m_3+m_4)/9$ subgroups are needed to cover the remaining elements from these two classes.  Since
\[(m_3 + m_4) +\left( 224 - \frac{8}{9} \cdot (m_3 + m_4)\right) = 224 + \frac{1}{9}(m_3 + m_4) \ge 224 + \frac{1}{9} \cdot 126 = 238,\]
at least $238$ subgroups are needed to cover classes $cl_{10}$, $cl_{12,1}$, and $cl_{18}$, collectively.  Nothing from $cl_7$ has yet been covered, so the subgroup in $\calM_1$ is still needed.  This gives the lower bound of $239$. 

(viii) $\PSU(4,3).2$ is the group $\U(4,3).2_3$ in the ATLAS \cite{ATLAS}. Using $\GAP$, we note the following distribution of elements in $\PSU(4,3).2$.
 
 \begin{center}
\begin{table}[H]\centering
\begin{tabular}{c|c|c|c|c|c|c}
 { }        & $\calM_1$ & $\calM_2$ & $\calM_3$ & $\calM_4$  & $\calM_5$ & $\calM_6$ \\ 
 { }        &  (1)      & (112)   & (162)   & (162)  & (280) & (540)\\
 \hline
 $cl_{24}$ & 0 & 0 & 0 & 0 & $972,P$ & 0   \\
 $cl_{10}$ & 0 & 0 & $8064_2$ & 0 & 0 & 0  \\
$cl_{8}$ & 0 & $14580_2$ &  0 & 10080 & 0 & 0  \\

 \end{tabular}
 \caption{Element distribution in $\PSU(4,3).2$}
 \label{tbl:PSU432_31}
 \end{table}
 \end{center}
 
 \begin{center}
\begin{table}[H]\centering
\begin{tabular}{c|c|c|c|c|c}
 { }        & $\calM_7$ & $\calM_8$ & $\calM_9$ & $\calM_{10}$  & $\calM_{11}$ \\ 
 { }        &  (2835)      & (4536)   & (4536)   & (45366)  & (8505)\\
 \hline
 $cl_{24}$ & 96 & 0 & 0 & 0 & 0   \\
 $cl_{10}$ & 0 & 144 & 144 & 144 & 0  \\
$cl_{8}$ & 288 & 0 &  0 & 360 & 96  \\

 \end{tabular}
 \caption{Element distribution in $\PSU(4,3).2$, cont.}
 \label{tbl:PSU432_32}
 \end{table}
 \end{center}
Using $\GAP$, we see that the subgroups in $\calM_2$, $\calM_3$, and $\calM_5$ constitute a cover, giving the upper bound.  On the other hand, Tables \ref{tbl:PSU432_31} and \ref{tbl:PSU432_32} show that it takes at least $280$ subgroups to cover the elements in $cl_{24}$.  No maximal subgroup that contains elements from $cl_{24}$ contains elements of $cl_{10}$, so it takes at least $162/2$ subgroups to cover these elements.  Finally, because
\[ \left\lceil \frac{112\cdot \frac{14580}{2} - 288 \cdot 280}{14580} \right\rceil = 51,\]
it takes at least an additional $51$ subgroups to cover the elements of $cl_8$.  Hence it takes at least $412$ groups to cover these three classes.  The result follows.

(ix) The upper bound comes from using Algorithm \ref{alg}. On the other hand, using $\GAP$, we have the following distribution of elements in $\PSL(4,3).2$.
 
 \begin{center}
\begin{table}[H]\centering
\begin{tabular}{c|c|c|c|c|c|c|c|c|c}
 { }        & $\calM_1$ & $\calM_2$ & $\calM_3$ & $\calM_4$  & $\calM_5$ & $\calM_6$ & $\calM_7$ & $\calM_8$ & $\calM_9$\\ 
 { }        &  (1)      & (117)   & (117)   & (130)  & (520) & (1080) & (2106) & (8424) & (10530)\\
 \hline
 $cl_{12,1}$ & 0 & 0 & $4320,P$ & $3888,P$ & 0 & 0 & $240$ & 0 & 48\\
 $cl_{12,2}$ & 0 & $4320,P$ & 0 & $3888,P$ & 0 & 0 & $240$ & 0 & 48\\
 $cl_{6}$ & 0 & 0 & 0 & $10368_2$ & $1296,P$ & 1872 & 0 & 240 & 64\\
 $cl_{20}$ & $606528,P$ & 0 & 0 & 0 & 0 & 0 & 288 & 0 & 0\\
 $cl_{8}$ & 0 & $6480,P$ & $6480,P$ & 0 & $2916_2$ & 1404 & 0 & 180 & 0\\
 $cl_{10,1}$ & 0 & $10368_2$ & 0 & 0 & 0 & 0 & 288 & 0 & 0\\
 $cl_{10,2}$ & 0 & 0 & $10368_2$ & 0 & 0 & 0 & 288 & 0 & 0\\
 \end{tabular}
 \caption{Element distribution in $\PSL(4,3).2$}
 \label{tbl:PSL432}
 \end{table}
 \end{center}
Assume that $117 - m_{2,3}$ subgroups are used from classes $\calM_2$ and $\calM_3$ to cover $cl_8$.  In this case, an additional $m$ subgroups from classes $\calM_5$, $\calM_6$, and $\calM_8$ are needed to cover $cl_8$.  Now, $6480m_{2,3} \le 2916m$, and so $117 - m_{2,3} + m \ge 117 + 11m/20$.  At this point, we have covered at most $1872m$ elements of $cl_6$, and so there are $673920 - 1872m$ elements still to cover.  Since
\[\frac{673920 - 1872m}{10368} = 65 - \frac{13m}{72},\]
at least $65 - 13m/72$ subgroups are needed to cover the remaining elements of $cl_6$.  So far, we have used at least $182$ subgroups, since
\[\left(117 + \frac{11m}{20}\right) + \left(65 - \frac{13m}{72}\right) = 182 + \frac{133m}{360} \ge 182.\]
None of the subgroups that contain elements from $cl_6$ or $cl_8$ contain elements from $cl_{20}$, so including the subgroup from $\calM_1$ means at least $183$ subgroups are needed to cover $cl_6$, $cl_8$, and $cl_{20}$.  Of the elements in $cl_{12,1}$ and $cl_{12,2}$, we have covered at most $4320\cdot 117 + 3888 \cdot 65$, which leaves at least $252720$ still uncovered.  This means at least an additional $\lceil 252720/4320 \rceil$ more subgroups are needed, and, since $\lceil 252720/4320 \rceil = 59$, we have a lower bound of $242$ subgroups.

(x) The upper bound comes from Algorithm \ref{alg}.  Using $\GAP$, we have the following distribution of elements in ${\rm O}^-(8,2)$.
 
 \begin{center}
\begin{table}[H]\centering
\begin{tabular}{c|c|c|c|c|c|c|c|c}
 { }        & $\calM_1$ & $\calM_2$ & $\calM_3$ & $\calM_4$  & $\calM_5$ & $\calM_6$ & $\calM_7$ & $\calM_8$ \\ 
 { }        &  (119)      & (136)   & (765)   & (1071)  & (1632) & (24192) & (45696) & (1175040) \\
 \hline
 $cl_{17}$ & 0 & 0 & 0 & 0 & 0 & $480,P$ & 0 & 0 \\
 $cl_{30}$ & 0 & 0 & 0 & $6144,P$ & 0 & 0 & 144 & 0   \\
$cl_{21}$ & 0 & 0 &  $24576_2$ & 0 & $5760$ & 0 & 0 & 0   \\
 $cl_{9}$ & $368640_2$ & $161280,P$ & 0 & 0 & 0 & 0 & 0 & 0 \\
$cl_{15}$ & 0 & $96768_3$ & 0 & 0 & 5376 & 0 & 96 & 0 \\
 \end{tabular}
 \caption{Element distribution in ${\rm O}^-(8,2)$}
 \label{tbl:O-82}
 \end{table}
 \end{center}
It is clear from Table \ref{tbl:O-82} that at least $24192 + 1071$ subgroups are needed to cover $cl_{17}$ and $cl_{30}$.  No maximal subgroups that contain elements in $cl_{17}$ or $cl_{30}$ contain elements in $cl_{21}$, so at least another $\lceil 765/2 \rceil$ are needed.  Finally, no subgroup that contains elements in $cl_{17}$, $cl_{30}$, or $cl_{21}$ contains elements in $cl_9$, which takes at least $\lceil 119/2 \rceil$ additional subgroups, giving a lower bound of $25706$. 
 
(xi) Algorithm \ref{alg} shows that the covering number of ${\rm O}^+(8,2)$ is at most $765$.  Using $\GAP$, we have the following element distribution.
 
 \begin{center}
\begin{table}[H]\centering
\begin{tabular}{c|c|c|c|c|c|c|c|c}
 { }        & $\calM_1$ & $\calM_2$ & $\calM_3$ & $\calM_4$  & $\calM_5$ & $\calM_6$ & $\calM_7$ & $\calM_8$ \\ 
 { }        &  (120)      & (120)   & (120)   & (135)  & (135) & (135) & (960) & (960) \\
 \hline
 $cl_{15,1}$ & 0 & 0 & $96768,P$ & $172032_2$ & 0 & 0 & 0 & 0 \\
 $cl_{15,2}$ & $96768,P$ & 0 & 0 & 0 & 0 & $172032_2$ & 24192 & 0   \\
$cl_{15,3}$ & 0 & $96768,P$ &  0 & 0 & $172032_2$ & 0 & 0 & 24192   \\

 \end{tabular}
 \caption{Element distribution in ${\rm O}^+(8,2)$}
 \label{tbl:O+82_1}
 \end{table}
 \end{center}
 
  \begin{center}
\begin{table}[H]\centering
\begin{tabular}{c|c|c|c|c|c|c|c|c|c}
 { }        & $\calM_9$ & $\calM_{10}$ & $\calM_{11}$ & $\calM_{12}$  & $\calM_{13}$ & $\calM_{14}$ & $\calM_{15}$ & $\calM_{16}$ & $\calM_{17}$\\ 
 { }        &  (960)      & (1120)   & (1120)   & (1120)  & (1575) & (11200) & (12096) & (12096)) & (12096))\\
 \hline
 $cl_{15,1}$ & 24192 & 0 & 10368 & 0 & 0 & 0 & 0 & 960 & 0 \\
 $cl_{15,2}$ & 0 & 0 & 0 & 10368 & 0 & 0 & 0 & 0 & 960   \\
$cl_{15,3}$  & 0 & 10368 & 0 & 0 & 0 & 0 & 960 & 0 & 0   \\

 \end{tabular}
 \caption{Element distribution in ${\rm O}^+(8,2)$, cont.}
 \label{tbl:O+82_2}
 \end{table}
 \end{center}
It is clear from Tables \ref{tbl:O+82_1} and \ref{tbl:O+82_2} that no maximal subgroup contains elements from more than one of the classes $cl_{15,1}$, $cl_{15,2}$, or $cl_{15,3}$.  A minimal cover for each class consists of at least $\lceil 135/2 \rceil$ subgroups, and so at least $204$ subgroups are needed in any cover.
 
(xii) The upper bound comes from Algorithm \ref{alg}.  Using $\GAP$, we have the following distribution of elements.
 
 \begin{center}
\begin{table}[H]\centering
\begin{tabular}{c|c|c|c|c|c|c|c}
 { }        & $\calM_1$ & $\calM_2$ & $\calM_3$ & $\calM_4$  & $\calM_5$ & $\calM_6$ & $\calM_7$ \\ 
 { }        &  (1)      & (660)   & (660)   & (121)  & (121) & (144) & (3025) \\
 \hline
 $cl_{12}$ & 0 & 0 & $220_2$ & 0 & 0 & 0 & 24  \\
 $cl_{22}$ & 0 & $60,P$ & 0 & 0 & 0 & $275,P$ & 0  \\
 $cl_{6,1}$ & 0 & $220,P$ &  0 & $1200_2$ & $1200_2$ & 0 & $24$ \\
 $cl_{6,2}$ & $220,P$ & 0 &  0 & 0 & 0 & 0 & $4$ \\
 \end{tabular}
 \caption{Element distribution in $\PSL(2,11) \Wr 2$}
 \label{tbl:PSL211wr2}
 \end{table}
 \end{center}
Examining Table \ref{tbl:PSL211wr2}, it is clear that at least $660/2 + 144$ subgroups are needed to cover $cl_{12}$ and $cl_{22}$, since these elements lie in disjoint classes of maximal subgroups.  Since 
\[ \left\lceil \frac{145200 - 144 \cdot 220}{1200} \right\rceil = 95,\]
at least $95$ more subgroups needed for $cl_{6,1}$.  Finally, not all elements from $cl_{6,2}$ are covered, and so the subgroup from $\calM_1$ is needed, giving the lower bound of $570$.
\end{proof}

\begin{prop}
\label{prop:sigmageq}
We have the following lower bounds for the indicated covering number values:
\begin{enumerate}[(i)]
\item $\sigma(\PSL(5,3)) \ge 393030144$;
\item $\sigma((A_{11} \times A_{11}).4) \ge 213444$;
\item $\sigma(\PSL(7,2)) \ge 184308203520$.
\end{enumerate}
\end{prop}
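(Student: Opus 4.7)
The plan is to establish each lower bound by identifying, for the group $G$ in question, a single conjugacy class $\Pi$ of elements of $G$ and a single class $\mathcal{M}$ of maximal subgroups of $G$ with two properties: $\mathcal{M}$ is the only class of maximal subgroups that contains any element of $\Pi$, and the elements of $\Pi$ are partitioned among the members of $\mathcal{M}$. The resulting lower bound is then $|\mathcal{M}|$, which will turn out to equal exactly the value asserted in the statement.

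For parts (i) and (iii), both groups have the form $\PSL(n,q)$ with $n$ prime, namely $(n,q)=(5,3)$ and $(n,q)=(7,2)$. I would take $\Pi$ to be the class of generators of a Singer cycle $S \le G$. Recall that $S$ is cyclic of order
\[
\frac{q^{n}-1}{(q-1)\gcd(n,q-1)},
\]
which equals $121$ for $\PSL(5,3)$ and $127$ for $\PSL(7,2)$; its normalizer satisfies $|N_G(S)| = n \cdot |S|$, giving $605$ and $889$, respectively. Kantor's theorem, as invoked in the proof of Theorem \ref{thm:AGL}, states that the only maximal subgroups of $\GL(n,q)$ containing a Singer cycle are the field-extension subgroups isomorphic to $\GL(n/b, q^{b})$ with $b \mid n$ and $b > 1$. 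Since $n$ is prime, the only possibility is $b = n$, and intersecting with $\PSL(n,q)$ yields precisely $N_G(S)$ as the unique maximal overgroup of $S$. Distinct Singer cycles intersect trivially, and each conjugate of $N_G(S)$ contains a unique Singer cycle, so the generators of Singer cycles are partitioned among the $|G|/|N_G(S)|$ conjugates of $N_G(S)$. Direct arithmetic then yields $|\PSL(5,3)|/605 = 393{,}030{,}144$ and $|\PSL(7,2)|/889 = 184{,}308{,}203{,}520$.

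For part (ii), I would take $\Pi$ to be the class in $(A_{11}\times A_{11}).4$ of elements $(g,h) \in A_{11}\times A_{11}$ where both $g$ and $h$ have cycle type $(5,3,3)$ (which is even, so $g,h \in A_{11}$, with $|g|=|h|=15$). Because the orbit structure of such a $g$ has a unique orbit of size $5$, the element $g$ setwise stabilizes a unique $5$-subset $S_g \subseteq \{1,\ldots,11\}$, and similarly $h$ determines a unique $5$-subset $S_h$. A GAP inspection of the maximal subgroups of $(A_{11}\times A_{11}).4$, guided by Lemma \ref{five}, will confirm that the only maximal subgroup containing $(g,h)$ is the product-type subgroup built from the setwise stabilizers of $S_g$ and $S_h$ in $A_{11}$ together with the compatible portion of the outer $C_4$; diagonal-type maximal subgroups are excluded because their projections to the two factors are isomorphisms of $A_{11}$, whereas $g$ and $h$ (and hence $S_g$ and $S_h$) can be chosen independently. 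Counting ordered pairs of $5$-subsets gives $\binom{11}{5}^{2} = 462^{2} = 213{,}444$ product-type maximal subgroups, each of which is forced into any cover of $\Pi$.

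The main obstacle will be part (ii): ruling out the diagonal-type maximal subgroups and confirming that distinct ordered pairs $(S_1,S_2)$ yield distinct maximal subgroups of $(A_{11}\times A_{11}).4$ (rather than being fused by the outer $C_4$) both require careful verification via GAP, following the style of the tables assembled in Proposition \ref{prop:sigmaleqgeq}. Parts (i) and (iii) reduce, once Kantor's theorem is applied, to the direct computation $|\PSL(n,q)|/(n\cdot|S|)$, which gives exactly the asserted bounds.
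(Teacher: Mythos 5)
Parts (i) and (iii) of your proposal are essentially the paper's argument: for $\PSL(7,2)=\GL(7,2)$ the paper invokes exactly Kantor's theorem on Singer cycles, and for $\PSL(5,3)$ it checks (via \GAP\ and the Bray--Holt--Roney-Dougal tables) that only the class $121{:}5$, of index $393030144$, has order divisible by $121$; your arithmetic for both indices is correct. One small caveat for (i): Kantor's theorem classifies maximal subgroups of $\GL(n,q)$ containing a \emph{full} Singer cycle, and a maximal subgroup of $\PSL(5,3)$ containing an element of order $121$ need not lift to a maximal subgroup of $\GL(5,3)$ containing the whole torus of order $242$, so the clean route is the direct inspection of the maximal subgroup classes (or a primitive-prime-divisor argument), not Kantor verbatim. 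This is a repairable technicality, not a flaw in the method.

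Part (ii), however, has a genuine gap. An element $g\in A_{11}$ of cycle type $(5,3,3)$ does not lie in a unique maximal subgroup of $A_{11}$: besides the stabilizer of its $5$-orbit it lies in the two intransitive maximal subgroups $(S_3\times S_8)\cap A_{11}$ stabilizing each of its $3$-orbits. Consequently your pair $(g,h)$ lies not only in the product-type subgroup built from $5$-set stabilizers but also in four product-type subgroups built from $3$-set stabilizers, and that latter class has only $\binom{11}{3}^2=27225$ members (it is visibly the class $\calM_4$ in the paper's Table \ref{tbl:A112.4}). Since each of your elements sits in four subgroups of that class, one can cover your class $\Pi$ with on the order of $27225/4\approx 6807$ subgroups, so the proposed GAP verification would fail and no lower bound near $213444$ follows from this choice of $\Pi$. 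The paper avoids this by letting Algorithm \ref{alg} select a class of elements of order $60$ (necessarily of a more restrictive shape than a socle element with type $(5,3,3)$ in each coordinate) which \GAP\ confirms meets \emph{only} the class $\calM_2$ of $213444$ subgroups and is partitioned among them; some such computational identification of a class hitting a single conjugacy class of maximal subgroups is the missing ingredient in your argument. Your secondary worries (excluding diagonal type, fusion of ordered pairs under the outer $C_4$) are legitimate but are not the real obstruction.
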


\begin{proof}
(i) A Sylow $11$-subgroup of $\PSL(5,3)$ has order $121$ and is cyclic.  Using GAP (and/or \cite[Tables 8.18-8.19]{BrayHoltRoneyDougal}), there are $8$ classes of maximal subgroups, and only one has order divisible by $121$ (and hence is the only maximal subgroup containing an element of order $121$). A maximal subgroup in this class is isomorphic to $121:5$, and the index of one of these groups in G is $393030144$. The result follows.

(ii) By Algorithm \ref{alg} (or, more accurately, one iteration of the loop in Algorithm \ref{alg}), there exists a class $cl_{60}$ of elements of order $60$ that are distributed as follows.

 \begin{center}
\begin{table}[H]\centering
\begin{tabular}{c|c|c|c|c|c|c|c}
 { }        & $\calM_1$ & $\calM_2$ & $\calM_3$ & $\calM_4$  & $\calM_5$ & $\calM_6$ & $\calM_7$ \\ 
 { }        &  (1)      & (213444)   & (108900)   & (27225)  & (3025) & (121) & (131681894400) \\
 \hline
 $cl_{60}$ & 0 & 124416000 & $0$ & 0 & 0 & 0 & 0  \\
 \end{tabular}
 \caption{Element distribution in $(A_{11} \times A_{11}).4$}
 \label{tbl:A112.4}
 \end{table}
 \end{center}
The elements in the class $cl_{60}$ are partitioned among the subgroups in $\calM_2$, so at least $213444$ subgroups are needed.

(iii) This follows immediately from considering the elements of order $2^7 - 1$ and the result of Kantor \cite{Kantor} that only field extension subgroups contain a Singer cycle.
\end{proof}


\end{document}